\documentclass{article}
\usepackage{amssymb}
\usepackage{amsmath}
\usepackage{amsthm}
\usepackage{mathrsfs}
\usepackage{lineno}
\usepackage{pdflscape}
\usepackage{booktabs}
\usepackage{adjustbox}
\usepackage{tabularx}
\usepackage{array}
\usepackage{makecell}

\newcommand{\EC}{\mathbb{C}}
\newcommand{\MK}{\mathcal{K}}
\newcommand{\MAq}{\mathcal{A}^q}
\newcommand{\MA}{\mathcal{A}}
\newcommand{\MBq}{\mathcal{B}^q}
\newcommand{\MB}{\mathcal{B}}

\newcommand{\MMq}{\mathcal{M}^q}
\newcommand{\MKq}{\mathcal{K}^q}
\newcommand{\sQ}{\mathscr{Q}}
\newcommand{\sQe}{\mathscr{Q}^\epsilon}
\newcommand{\sRd}{\mathscr{R}^\delta}
\newcommand{\ijbar}{{i\overline{j}}}
\newcommand{\jbar}{{\overline{j}}}
\newcommand{\mineig}{\text{Minimal Eigenvalue}}
\newcommand{\smineig}{\text{Second Minimal Eigenvalue}}
\newcommand{\diag}{\text{diag}}
\newcommand{\lamb}{{\lambda_1}}
\newcommand{\samb}{{\lambda_2}}
\newcommand{\Ov}{	\Omega^\varepsilon}
\newtheorem{proposition}{Proposition}
\newcommand{\sigmae}{\sigma^\epsilon}
\newcommand{\pqbar}{{p\overline{q}}}
\newcommand{\Lsigma}{L^{\pqbar}\sigma_{\pqbar}}
\newcommand{\uonebar}{{u_{\overline1}}}

\newcommand{\qbar}{{\overline q}}
\newcommand{\rqbar}{\overline{r^q}}
\newcommand{\onebar}{{\overline{1}}}

\newcommand{\sbar}{{\overline{s}}}
\newcommand{\trace}{{\text{tr}}}

\newcommand{\MM}{\mathcal{M}}
\newtheorem{problem}{Problem}
\newcommand{\Ree}{\text{Re}}
\newcommand{\MAbarinverse}{{\overline{\MA}}^{-1}}
\newcommand{\MAbar}{{\overline{\MA}}}
\newcommand{\MBbar}{{\overline{\MB}}}
\newcommand{\deltaa}{{\delta_1}}
\newcommand{\deltab}{{\delta_2}}
\newcommand{\ba}{{\bf{a}}}
\newcommand{\bK}{{\bf K}}

\numberwithin{equation}{section}
\newtheorem{theorem}{Theorem}[section]
\newtheorem{lemma}[theorem]{Lemma}

\newtheorem{remark}{Remark}

\newcommand{\e}{\mathrm{e}}
\newcommand{\p}{\partial_{p}}
\newcommand{\q}{\partial_{\bar{q}}}
\newcommand{\pq}{\partial_{p\bar{q}}}
\newcommand{\pz}{\partial_{z}}
\newcommand{\pbz}{\partial_{\bar{z}}}
\newcommand{\zz}{\partial_{z\bar{z}}}
\newcommand{\ma}{\mathcal{A}}
\newcommand{\mb}{\mathcal{B}}
\newcommand{\mm}{\mathcal{M}}
\title{{Power Convexity of Solutions to the Complex Monge-Amp\`{e}re Equation $\det(u_{\ijbar})=1$ in Complex Dimension Two}}
	\author{Hongyu Chen, Jingchen Hu, Li Sheng} 
\begin{document}

		\maketitle 
		\begin{abstract}
			In this paper, we establish the power convexity of solutions to the complex Monge-Amp\`{e}re equation $\det(u_{\ijbar})=1$ in a convex set in the complex 2 dimensional Euclidean space.  Even the convexity result is only for 2 dimension, our approach is applicable to all dimensions.
		\end{abstract}

	\section{Introduction}\label{sec1}
	Convexity properties of solutions are a classical and fundamental theme in elliptic partial differential equations. It is interesting to investigate if the solution $u$ of a partial differential equation is convex, and, if not, can you find a suitable monotone function $g(t)$ so that $g(u)$ is  convex.

	This line of inquiry was profoundly advanced by the pivotal work of \cite{Caffarelli-Friedman1985}, who established the strict power convexity of solutions for a class of semilinear elliptic equations in real dimension two
	\begin{equation}
		\begin{cases}
			\Delta u=f(u) & \text{ in }\Omega, \\
			u=M                  & \text{ on }\partial\Omega,
		\end{cases}
	\end{equation}
	where $\Omega$ is a bounded  convex domain in $\mathbb{R}^2$ and $M$ is a real constant. 
	Since then, this technique has been extensively studied and generalized by many authors, such as \cite{maxu}, \cite{Bian-Guan2009}, \cite{Bian-Guan-Ma-Xu2011}, \cite{Guan-Xu2013}, \cite{SzekelyhidiWeinkove2016}, and \cite{Chen-Jia-Xiong2025}.
	
	Also, very recently, works of Chen-Ma-Li-Salani established the power convexity of solutions to real $\sigma_2$ Hessian equation \cite{RealSigma} and the logarithmic convexity of solutions to complex $\sigma_2$ Hessian eigenvalue problem \cite{ComplexSigmaEigen}.

	In this paper, we investigate the power convexity of solutions to the Dirichlet problem for the complex Monge-Amp\`{e}re equation. Specifically, let $\Omega$ be a strictly convex bounded domain in $\mathbb{C}^n$, and let 
	$ u : \overline{\Omega} \rightarrow \mathbb{R} $ be the solution of the following Dirichelt problem:
	\begin{problem}\label{prob:Dirichlet}
	\begin{equation}\label{main equation}
		\begin{cases}
			\det\left(u_{i\bar{j}}\right)=1 & \text{in\  }\Omega,\\
			u=0                  & \text{on }\partial\Omega .
		\end{cases}
	\end{equation}
	\end{problem}
	It is known that the solution $u$ itself is in general not convex in $\Omega$. This leads naturally to the question of whether, for a given exponent $0<\alpha <1$, the power function $-(-u)^\alpha$ is  convex in $\Omega$. 
	In this paper, we establish the convexity of $-\sqrt{-u}$ in the case of complex dimension $2$.
	\begin{theorem}
		\label{mainTheorem}
		Let $\Omega$ be a strictly convex bounded domain in $\mathbb{C}^2$ with smooth boundary and $u\in C^4(\Omega)\cap C^2(\overline{\Omega})$ be the classical solution of (\ref{main equation}).
		Then $-\sqrt{-u}$ is strictly convex in $\Omega$.
	\end{theorem}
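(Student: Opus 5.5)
Set $v=-\sqrt{-u}$, so $u=-v^2$ with $v<0$ in $\Omega$ and $v=0$ on $\partial\Omega$. The plan is a continuity/deformation argument combined with a constant rank (minimum principle) theorem for the real Hessian $D^2v$.

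\textbf{Step 1: the equation for $v$.} From $u_{i\bar j}=-2(vv_{i\bar j}+v_iv_{\bar j})$ and the matrix determinant lemma, $\det(u_{i\bar j})=1$ becomes, in $\mathbb{C}^2$,
\begin{equation*}
4v^2\det(v_{i\bar j})\bigl(1+v^{-1}v^{i\bar j}v_iv_{\bar j}\bigr)=1,
\end{equation*}
where $(v_{i\bar j})$ is negative definite because $u$ is plurisubharmonic. It is more convenient to work with $w=-v>0$. The exponent $1/2$ is natural here: the homogeneity of the equation ($u$ scales quadratically when $\Omega$ is dilated in all directions) makes $\sqrt{-u}$ behave like a distance-type function near $\partial\Omega$.

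\textbf{Step 2: strict convexity near the boundary.} Because $\Omega$ is strictly convex and smooth, Hopf's lemma gives $|\nabla u|\ge c>0$ on $\partial\Omega$. Expanding $D^2v=\frac{1}{2\sqrt{-u}}D^2u+\frac{1}{4(-u)^{3/2}}\nabla u\otimes\nabla u$ in a boundary neighborhood, the rank-one term dominates in the normal direction. In tangential directions, $D^2u|_{T\partial\Omega}=|\nabla u|\,\mathrm{II}>0$, which is positive by strict convexity. The mixed terms are absorbed by the large normal term. Hence $D^2v>0$ in a collar $\{-u<\epsilon\}$.

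\textbf{Step 3: deformation.} Deform $\Omega_t$, $t\in[0,1]$, through strictly convex smooth domains from a ball $\Omega_0$ to $\Omega_1=\Omega$. On the ball, $u=|z|^2-r^2$ and $v=-\sqrt{r^2-|z|^2}$ is strictly convex. The solutions depend continuously in $C^2_{loc}$, with the boundary collar estimate of Step 2 uniform in $t$. If strict convexity first fails at some $t_0$, then $D^2v_{t_0}\ge0$ and has a zero eigenvalue at some interior point, while it stays positive near the boundary.

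\textbf{Step 4: constant rank theorem (main obstacle).} I would show that if $D^2v\ge 0$ in $\Omega$, then its rank is constant. Combined with Step 3, this gives a contradiction: rank is $4$ near $\partial\Omega$, so it must be $4$ everywhere.

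For the proof, I would take the smallest eigenvalue $\lambda_1$ of $D^2v$, or $\phi=\sigma_4(D^2v)$ near a point of minimal rank $l=3$, and aim for a differential inequality
\begin{equation*}
L\phi\le C(\phi+|\nabla\phi|)
\end{equation*}
for the linearized operator $L=v^{i\bar j}\partial_{i\bar j}$ plus first-order terms. The strong maximum principle then forces $\phi\equiv0$.

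The difficulty is converting the complex equation into information on the real Hessian. Differentiating twice in a real direction $e$ with $v_{ee}=0$ gives an expression involving $v_{ee i\bar j}$. The third-order terms $v_{e i\bar j}$, $v_{e e i}$ must be controlled using $v_{ee}=\lambda_1=0$ (so $v_{eei}=0$ and $v_{e\alpha}=0$ for the null direction) together with the concavity of $\log\det$. The term involving $v^{-1}|\nabla v|^2$ has to be shown to have a good sign precisely because the exponent is $1/2$; this is where I expect the dimension-two restriction to enter.

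I would first try the auxiliary function $\lambda_1$ in the viscosity sense, choosing a unitary frame that diagonalizes $(v_{i\bar j})$ at the point. I would split $e$ into its components along $J$-related directions, and then use the quadratic form inequality
\begin{equation*}
-v^{i\bar k}v^{l\bar j}v_{e i\bar j}v_{e l\bar k}\le 0
\end{equation*}
to absorb the bad terms.
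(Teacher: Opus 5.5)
\textbf{There is a genuine gap: Step 4 is the whole theorem, and you state it as an intention rather than prove it.} Your Steps 2--3 (strict convexity in a boundary collar, continuity from a ball) match the paper's framework. But you never compute $L\phi$. You do not say which third-derivative quantities appear, how the positive gradient term is absorbed (your $v^{-1}|\nabla v|^2$ term; in the paper it is $h^2|u_1-u_{\bar 1}|^2\sum_t u^{\bar t t}$ with $h=1/(-2u)$), or where $n=2$ is used; you only say you expect it to enter.

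A generic argument will not close this by itself. In the paper's computation the residual quadratic form is indefinite for $n\ge 3$, so a dimension-specific algebraic inequality is required. The paper's mechanism is as follows.
\begin{itemize}
\item It works with complex derivatives $\mathcal{A}=(u_{i\bar j}+hu_iu_{\bar j})$, $\mathcal{B}=(u_{ij}+hu_iu_j)$, $\mathcal{M}=\mathcal{A}-\mathcal{B}\overline{\mathcal{A}}^{-1}\overline{\mathcal{B}}$, and the auxiliary function $\sigma=\lambda_{\min}(\mathcal{M}\mathcal{A}^{-1})$. In a frame with $\mathcal{A}=I$ and $\mathcal{B}=\mathrm{diag}(b_k)$, one has $\sigma=1-b_1^2$, while the real Hessian eigenvalues of $v$ are proportional to $1\pm b_k$.
\item The differentiated equation gives exact formulas for $u^{\bar q p}\partial_{p\bar q}$ of $\mathcal{A}$ and $\mathcal{B}$ up to explicit $h$-terms, and many terms cancel.
\item The gradient term is absorbed using the first-order condition $\partial_p\sigma\sim 0$ and a completion of squares.
\item The result is $u^{\bar q p}\sigma_{p\bar q}\sim|\mathrm{tr}\,W|^2-\sum_{p,s}|W_{p\bar s}|^2-2\sum_{s\neq 1}\mu_s^{-1}u^{\bar q p}H_{ps}\overline{H_{qs}}$. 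Here $W_{s\bar q}=u_{1s\bar q}-u_{\bar 1 s\bar q}$ is skew-Hermitian with the constraint $u^{\bar q p}W_{p\bar q}=0$, and $H_{ps}=\partial_p\mathcal{M}_{s\bar 1}$.
\item The inequality $|\mathrm{tr}\,W|^2\le\sum|W_{p\bar s}|^2$ under this trace constraint holds for $n=2$ and fails for $n\ge 3$. That is exactly where dimension two enters.
\end{itemize}
Nothing in your sketch plays this role; neither the term $-v^{i\bar k}v^{l\bar j}v_{ei\bar j}v_{el\bar k}$ nor splitting $e$ along $J$-directions does. So the key differential inequality remains unproved.

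\textbf{You also do not handle points where the minimal eigenvalue is multiple.} There $\lambda_1(D^2v)$ is not smooth. Taking it ``in the viscosity sense'' is not automatic: you need smooth functions touching from above that satisfy the inequality with controlled constants.

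Your $\sigma_4$ argument also misses a case. Since $(v_{i\bar j})>0$, $\ker D^2v\cap J\ker D^2v=0$, so a degenerate point can have rank $2$. This is the case $b_1=b_2=1$, a double zero eigenvalue of $\mathcal{M}\mathcal{A}^{-1}$, and your argument at minimal rank $l=3$ does not cover it.

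The paper handles exactly this issue by perturbing $u$ by pluriharmonic quadratic polynomials $q$ with $\int q^2\le\epsilon$, so that $u+q$ still solves $\det=1$. It sets $\sigma^\epsilon=\min_q\lambda_1(\mathcal{K}^q)$ and proves in the appendix that a minimizing $q$ has a simple minimal eigenvalue. It then applies the strong minimum principle to the smooth touching function $\lambda_1(\mathcal{K}^q)$, rather than claiming a uniform viscosity inequality.

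\textbf{Finally, there is a sign error in Step 1.} $(v_{i\bar j})=\mathcal{A}/(2\sqrt{-u})$ is positive definite, not negative definite. With your sign, $v$ would be plurisuperharmonic and could not be convex at all.
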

	
This result is also proved by a very recent paper of Zhang-Zhou \cite{ZhangZhou}, using a different method.
	
	In this paper, we employ a method developed in \cite{hu25}, \cite{hu-maxrank-2024},  \cite{Hu-metric-lower-bound24} and \cite{HuSheng}, which could be used  to study the real convexity(positivity of the real Hessian) of a solution using complex derivatives. Specifically, in this paper, we introduce a real-valued auxiliary function $\sigma$( involving only complex derivatives $u_{\ijbar}, \ u_{ij}, u_i$, where $i,j$ are indices for complex coordinates) quantifying the convexity of $-\sqrt{-u}$, so that proving the strict convexity of $-\sqrt{-u}$ is reduced  to proving the positivity of  $\sigma$. The advantage is that for an equation involving only complex derivatives $u_{\ijbar}$, using complex derivatives of $u$ is much easier than using real derivatives of $u$. 
	
	The paper is organized as follows. In Section \ref{sec:equation_for _secondDerivatives}, we derive differential equations for the second derivatives of the solution $u$ and  $-\sqrt{-u}$. 
	In Section 3, we estimate the minimal eigenvalue  of $\mm \ma^{-1}$. Section \ref{subsec:compute_SecondDerivative} and  \ref{sec:2d} treat the case where the minimal eigenvalue of  $\mm \ma^{-1}$ is strictly smaller than the second minimal eigenvalue of  $\mm \ma^{-1}$, while Section \ref{sec:generalCase_comparison} addresses the general case via approximation.

	\section{Differential Equations for Second Derivatives and Their Modifications}
	\label{sec:equation_for _secondDerivatives}
	Suppose that $u$ is a solution to equation (\ref{main equation}). In this section, we derive the important equations satisfied by the second derivatives of $u$ and $v:=-\sqrt{-u}$.
	\subsection{Equations for Second Derivatives of $u$}
	We apply $\partial_i$ to the equation of $u$:
	\begin{equation*}
		\log \left[ \det (u_{p\bar{q}})\right]= \text{constant },
	\end{equation*}
	and get
	\begin{equation}\label{Eq_1st_Derivative_2.1}
		u^{\bar{q}p}u_{p\bar{q}i}=0.
	\end{equation}
		\begin{remark}
		Throughout this paper, we adopt the convention that the first index of a matrix entry is the \emph{row} index and the second index is the \emph{column} index. For instance, we write the inverse of invertible $A$ as
		\begin{equation}
			A^{-1}=(A^{\bar{j}i}),    
		\end{equation}
		meaning that $j$ is the row index and $i$ is the column index (equivalently, $A_{k\bar{j}}\,A^{\bar{j}i}=\delta_k^{\,i}$). For the inverse of the complex conjugate matrix $\overline{A}$, we write
		\begin{equation}
			\overline{A}^{-1}=(A^{j\bar{i}}),    
		\end{equation}
		where $j$ is the row index and $i$ is the column index; in particular, $A^{j\bar{i}}=\overline{A^{\bar{j}i}}$.
		Moreover, if $A$ is Hermitian, then its inverse is also Hermitian. In particular, at the level of entries we have $A^{\bar{j}i}=A^{i\bar{j}}$
	\end{remark}
	Then we apply $\partial_{\bar{j}}$ to equation (\ref{Eq_1st_Derivative_2.1}) and get 
	\begin{equation}\label{2_1_2}
		u^{\bar{q}p}u_{p\bar{q}i\bar{j}}-u^{\bar{q}s}u_{s\bar{t}\bar{j}}u^{\bar{t}p}u_{p\bar{q}i}=0.
	\end{equation}
	We switch indices $p$ and $s$ in the second term of equation (\ref{2_1_2}) and get 
	\begin{equation}\label{2_1_3}
		u^{\bar{q}p}u_{p\bar{q}i\bar{j}}-u^{\bar{q}p}u_{p\bar{t}\bar{j}}u^{\bar{t}s}u_{s\bar{q}i}=0.
	\end{equation}
	We switch indices $t$ and $q$ in the second term of equation (\ref{2_1_2}) and get 
	\begin{equation}\label{2_1_4}
		u^{\bar{q}p}u_{p\bar{q}i\bar{j}}-u^{\bar{t}s}u_{s\bar{q}\bar{j}}u^{\bar{q}p}u_{p\bar{t}i}=0.
	\end{equation}
	Let 
	\begin{equation*}\label{def of A,B}
		A=(A_{i\bar{j}})=(u_{i\bar{j}}),\quad B=(B_{ij})=(u_{ij}),
	\end{equation*}
	where $i$ is the row index and $j$ is the column index. 

	Since $u$ is strictly plurisubharmonic, the  Hermitian matrix $A$ is positive definite. It is convenient to write equations (\ref{2_1_3}) and (\ref{2_1_4}) as equations for matrices.  
	Here, let $i$ be the row index and $j$ be the column index in equations (\ref{2_1_3}) and (\ref{2_1_4}). Then equation (\ref{2_1_3}) becomes 
	\begin{equation}\label{Matrix_A_Equation_BB}
		u^{\bar{q}p}\partial_{p\bar{q}}A=u^{\bar{q}p}\partial_{\bar{q}}B\overline{A}^{-1}\partial_p \overline{B},
	\end{equation}
	and equation (\ref{2_1_4}) becomes
	\begin{equation}\label{Matrix_A_Equation_AA}
		u^{\bar{q}p}\partial_{p\bar{q}}A=u^{\bar{q}p}\partial_pA A^{-1}\partial_{\bar{q}}A.
	\end{equation}
	
	Similarly, we apply $\partial_j$ to equation (\ref{Eq_1st_Derivative_2.1}) and get 
	\begin{equation}\label{2_1_5}
		u^{\bar{q}p}u_{p\bar{q}ij}-u^{\bar{q}s}u_{s\bar{t}j}u^{\bar{t}p}u_{p\bar{q}i}=0.
	\end{equation}
	We switch indices $p$ and $s$ in the second term of equation (\ref{2_1_5}) and get 
	\begin{equation}\label{2_1_6}
		u^{\bar{q}p}u_{p\bar{q}ij}-u^{\bar{q}p}u_{p\bar{t}j}u^{\bar{t}s}u_{s\bar{q}i}=0.
	\end{equation}
	We switch indices $q$ and $t$ in the second term of equation (\ref{2_1_5}) and get 
	\begin{equation}\label{2_1_7}
		u^{\bar{q}p}u_{p\bar{q}ij}-u^{\bar{t}s}u_{s\bar{q}j}u^{\bar{q}p}u_{p\bar{t}i}=0.
	\end{equation}
	We express equations (\ref{2_1_6}) and (\ref{2_1_7}) as matrices, where $i$ and $j$ denote the row and column indices, respectively. Then equation (\ref{2_1_6}) becomes 
	\begin{equation}\label{Matrix_B_Equation_BA}
		u^{\bar{q}p}\partial_{p\bar{q}}B=u^{\bar{q}p}\partial_{\bar{q}}B \overline{A}^{-1}\partial_p \overline{A},
	\end{equation}
	and equation (\ref{2_1_7}) becomes
	\begin{equation}\label{Matrix_B_Equation_AB}
		u^{\bar{q}p}\partial_{p\bar{q}}B= u^{\bar{q}p} \partial_pA A^{-1} \partial_{\bar{q}}B.
	\end{equation}
	Notice that equations (\ref{Matrix_A_Equation_AA}), (\ref{Matrix_A_Equation_BB}), (\ref{Matrix_B_Equation_BA}) and (\ref{Matrix_B_Equation_AB}) are analogs of equations (2.2) (2.3) of \cite{hu25}.
	The following equations are equivalent to equations (\ref{Matrix_B_Equation_BA}) and (\ref{Matrix_B_Equation_AB}), and they are very useful for subsequent computations. Equation 
	\begin{equation}\label{Eq—BqAp}
		u^{\bar{q}p}\partial_p\left(\partial_{\bar{q}}B \overline{A}^{-1}\right)=0
	\end{equation}
	is equivalent to equation (\ref{Matrix_B_Equation_BA}). Equation 
	\begin{equation}\label{Eq_AbarBbarpq}
		u^{\bar{q}p}\partial_{\bar{q}}\left( \overline{A}^{-1}\partial_p\overline{B}\right)=0
	\end{equation}
	is equivalent to the conjugate of equation (\ref{Matrix_B_Equation_BA}). 
	Then we apply $u^{\bar{q}p}\partial_{p\bar{q}}$ to the matrix 
	\begin{equation}
		M= A - B\overline{A}^{-1}\overline{B}
	\end{equation}
	and get 
	\begin{equation}
		u^{\bar{q}p}\partial_{p\bar{q}}M =u^{\bar{q}p}\partial_{\bar{q}}\left( \partial_pA -\mathbb{B}^{(p)} \overline{A}^{-1}\overline{B}-B\overline{A}^{-1}\partial_p\overline{B} \right).
	\end{equation}
	In the above, for the sake of simplifying the following computation, we introduce a new matrix 
	\begin{equation}\label{2_1_8}
		\mathbb{B}^{(p)}=\p B-B\overline{A}^{-1}\partial_p\overline{A},
	\end{equation}
	such that 
	\begin{equation}\label{2-17}
		\mathbb{B}^{(p)}\overline{A}^{-1}=\partial_p\left( B\overline{A}^{-1}\right).
	\end{equation}
	Taking the conjugate transpose of equation (\ref{2-17}) yields
	\begin{equation}
		\overline{A}^{-1}\left( \mathbb{B}^{(q)}\right)^*=\q\left( \overline{A}^{-1}\overline{B}\right).
	\end{equation}
	Then 
	\begin{align}
		u^{\bar{q}p}\pq M &=u^{\bar{q}p} \left( \pq A -\q B\overline{A}^{-1}\p \overline{B}\right)  -u^{\bar{q}p}B\partial_{\bar{q}}\left( \overline{A}^{-1}\partial_p\overline{B}\right) \nonumber \\
		&\hspace{2ex} +u^{\bar{q}p}\left[-   \q\mathbb{B}^{(p)} \overline{A}^{-1}\overline{B} -\mathbb{B}^{(p)}\overline{A}^{-1}\left(\mathbb{B}^{(q)}\right)^*   \right]     
		\label{regularpqm}
	\end{align}
	Using (\ref{Matrix_A_Equation_BB}) and (\ref{Eq_AbarBbarpq}), the equation above can be simplified:
	\begin{align}\label{2_1_9}
		u^{\bar{q}p}\pq M =   -u^{\bar{q}p} \q\mathbb{B}^{(p)} \overline{A}^{-1}\overline{B}  -u^{\bar{q}p}\mathbb{B}^{(p)}\overline{A}^{-1}\left(\mathbb{B}^{(q)}\right)^*  .
	\end{align}
	We need to compute $u^{\bar{q}p} \q\mathbb{B}^{(p)} $ in \eqref{2_1_9}. Applying $u^{\bar{q}p}\partial_{\bar{q}}$ directly to  equation (\ref{2_1_8}) yields
	\begin{align*}
		u^{\bar{q}p} \q\mathbb{B}^{(p)} &=u^{\bar{q}p} \left(\pq B-\q B\overline{A}^{-1}\p\overline{A}+B\overline{A}^{-1}\q\overline{A}\overline{A}^{-1}\p\overline{A}-B\overline{A}^{-1}\pq\overline{A}\right)\nonumber\\
		&=u^{\bar{q}p} 
		\left[ \left(  \pq B-\q B\overline{A}^{-1}\p\overline{A}\right)- B\overline{A}^{-1}\left(\q\overline{A}\overline{A}^{-1}\p\overline{A}- \pq\overline{A}\right)\right].
	\end{align*}
	Using equations (\ref{Matrix_B_Equation_BA}) and (\ref{Matrix_A_Equation_AA}), we obtain $u^{\bar{q}p} \q\mathbb{B}^{(p)} =0$. 
	So we find 
	\begin{equation}\label{M_leq_0}
		u^{\bar{q}p}\partial_{p\bar{q}}M=-u^{\bar{q}p}\mathbb{B}^{(p)}\overline{A}^{-1}\left(\mathbb{B}^{(q)}\right)^* \le 0.
	\end{equation}
	Here, for an $n\times n$ matrix $M$, we write 
	\begin{equation}
		M<C \quad (\text{ or }\,M > C),  
	\end{equation}
	if all eigenvalues of $M$ are real and $< C$( or $> C$); similarly we denote $M\le C$ or $M\ge C$ when equalities are included.
	
	This inequality (\ref{M_leq_0}) implies if $M\ge 0$ (positive definite) on $\partial \Omega$, then $M>0$ in $\Omega$.
	By Lemma A.6 of \cite{hu25} or Lemma \ref{lem:determinant_lemma} of this paper,  the strict convexity of $u$ is equivalent to the conditions $A>0$ and $M>0$. So the inequality (\ref{M_leq_0}) implies  that if u is strictly convex on $\partial \Omega$, then it is strictly convex in $\Omega$.
	
	However, there are many examples showing $u$ may not be convex on  $\partial \Omega$, even $\Omega$ is a strictly convex domain. So we turn to prove the convexity of a funtion of $u$, in particular, we study the convexity of $v:=-\sqrt{-u}$.
	
	\subsection{Differential Equations of Second Derivatives of $v$}
	By the maximum principle and the boundary condition $u(x)=0$ for $x\in \partial \Omega$, we conclude that $ -u>0$ in $\Omega$.
	The Hessian matrices of $ v:= -\sqrt{-u}$ are given by
	\begin{align}
		v_{i\bar{j}}=\frac{1}{2(-u)^\frac{1}{2}}\left[ u_{i\bar{j}}+ \frac{u_i u_{\bar{j}}}{-2u} \right],\
		v_{ij}=\frac{1}{2(-u)^\frac{1}{2}}\left[ u_{ij}+\frac{u_i u_{j}}{-2u} \right] .
	\end{align}
	Let 
	\begin{equation}
		\mathcal{A}=(\mathcal{A}_{i\bar{j}})=\left(u_{i\bar{j}}+\frac{u_iu_{\bar{j}}}{-2u}\right),\quad  
		\mathcal{B}=(\mathcal{B}_{ij})=\left(u_{ij}+\frac{u_iu_j}{-2u}\right),
	\end{equation}
	where $i$ is the row index and $j$ is the column index.  Then the matrix $\mathcal{A}$ is positive definite. Let 
	\begin{equation}
		\mathcal{M}=  \mathcal{A}- \mathcal{B} \overline{ \mathcal{A}}^{-1}\overline{ \mathcal{B}}.
	\end{equation}
	By Lemma A.6 of \cite{hu25} or Lemma \ref{lem:determinant_lemma} of this paper, the convexity of $v$ is equivalent to the positive definiteness of both $\mathcal{A}$ and $ \mathcal{M}$.

	Let 
	\begin{equation}
		\Omega_\delta=\{x\in \Omega \;|\; \text{dist} (x, \partial \Omega) > \delta \}.
	\end{equation} 
	According to the $C^2$ estimate for Problem \ref{prob:Dirichlet}, the minimum eigenvalue of $(u_{\ijbar})$ has a positive lower bound estimate in $\Omega$, depending on the convexity and regularity of $\Omega$. Since $\MA>A=(u_{\ijbar})$, the minimum eigenvalue of $\MA$ also has a positive lower bound  in $\Omega$.
	 If $\Omega$ is smooth and strictly convex, then it's well known that $v$ is strictly convex in $\Omega_\delta \setminus \Omega_{2\delta}$ when $\delta$ is small enough. Hence, $\mathcal{M} >0 $ in $\Omega_{\delta}\setminus \Omega_{2\delta}$.
	We need to show that this is also true in $\Omega_{2\delta}$. To do this, let 
	\begin{align}
		\MK=\MM\MA^{-1},
	\end{align} and we aim to prove that  the minimum eigenvalue of $\MK>0$.
	We compute and find 
	\begin{align}
		u^{\bar{q}p}( \lambda_{\min} 	\MK)_{p\bar{q}} \le C(\lambda_{\min} 	\MK + |\nabla \lambda_{\min}	\MK|),
	\end{align}
	 when the minimum eigenvalue of $\MK$ is different from the second minimal eigenvalue of $\MK$ in section \ref{sec:Estimate_Sigma} in the case of complex dimension 2. Then we can use this to derive a positive lower bound for  the minimum eigenvalue  of $\MK$.
	
	In this section, we derive the differential relations for $\mathcal{A}$  and $\mathcal{B}$, analogous to 
	the relations (\ref{Matrix_A_Equation_BB}), (\ref{Matrix_A_Equation_AA}), (\ref{Matrix_B_Equation_AB}) and (\ref{Matrix_B_Equation_BA}).
	
	In the computation for $u^{\bar{q}p}\pq M \le 0$, we used the following relations 
	\begin{itemize}
		\item  $u^{\bar{q}p}\left( \partial_{p\bar{q}}A-\partial_{\bar{q}}B\overline{A}^{-1}\partial_p \overline{B}\right)=0$,
		\item   $ u^{\bar{q}p}   \left( \pq\overline{A}-\q\overline{A}\overline{A}^{-1}\p\overline{A}\right) =0$,
		\item   $u^{\bar{q}p} \left(  \pq B-\q B\overline{A}^{-1}\p\overline{A}\right)=0$,
		\item   $ u^{\bar{q}p} \left(\pq \overline{B}-\q \overline{A}\overline{A}^{-1}\p\overline{B}\right)=0$.
	\end{itemize}
	When we compute $	u^{\bar{q}p}( \lambda_{\min} 	\MK)$, we find it's deeply related to computing 
	\begin{align}
		u^{\bar{q}p}\pq \MM.
	\end{align}
	 In the computation of $u^{\bar{q}p}\pq \mm$, the four relations above still appear but with some modifications.
		To simplify the computation, we introduce 
	\begin{equation}
		\mathscr{B}^{(p)}=\p \mb -\mb \overline{\ma}^{-1}\p \overline{\ma}.
		\label{Notation_sb}
	\end{equation}
	With this notation, we have
	\begin{equation}\label{2.40}
		\mathscr{B}^{(p)}\overline{\ma}^{-1}=\p \left( \mb \overline{\ma}^{-1} \right).
	\end{equation}
	Taking the conjugate transpose of equation \eqref{2.40} yields
	\begin{equation}
		\overline{\ma}^{-1}(\mathscr{B}^{(q)})^*=\q ( \overline{\ma}^{-1} \overline{\mb}). 
	\end{equation}
  First we compute $\partial_p\MM$:
  \begin{align}
  	\partial_p\MM=\partial_p\MA-\mathscr{B}^{( p)}{\overline\MA}^{-1}\overline{\MB}-\MB{\overline\MA}^{-1}\partial_p{\overline\MB}.
  \end{align}
  Then we apply $\partial_\qbar$ to the equation above. Simply using Leibniz rule, we get
	\begin{align}
		&u^{\bar{q}p}\pq \mm =u^{\bar{q}p} \left( \pq \ma -\q \mb\overline{\ma}^{-1}\p \overline{\mb}\right)  -u^{\bar{q}p}\mb\partial_{\bar{q}}\left( \overline{\ma}^{-1}\partial_p\overline{\mb}\right)  \nonumber\\
		&\hspace{2ex} +u^{\bar{q}p}\left[-   \q\mathscr{B}^{(p)} \overline{\MA}^{-1}\overline{\mb} -\mathscr{B}^{(p)}\overline{\MA}^{-1}\left(\partial_q\mathscr{B}\right)^*   \right] ,
		\label{pqM}
		\end{align}
		which is very similar to (\ref{regularpqm}).
		In above 
		\begin{align}
			\partial_\qbar\left(\overline{\MA}^{-1}\partial_p\overline\MB\right)=-\MAbarinverse
			\left(\partial_{\qbar} \MAbar\MAbarinverse\partial_p\overline\MB-\partial_{\pqbar}\MBbar
			\right)
			\label{qpB}
		\end{align}
		and 
		\begin{align}
			\label{qsBp}
		 \q\mathscr{B}^{(p)}&	=\partial_{\qbar} \left(
		 \partial_p\MB-\MB\MAbarinverse\partial_p\MAbar
		 \right)\nonumber\\
		 =&\left(
		 \partial_\pqbar\MB-\partial_\qbar\MB\MAbarinverse\partial_p\MAbarinverse
		 \right)-\MB\MAbarinverse\left(\pq \overline{\ma}-\q \overline{\ma}\overline{\ma}^{-1}\p \overline{\ma}
		 \right).
		\end{align}
		Plugging (\ref{qpB}) and (\ref{qsBp}) into (\ref{pqM}), we get
		\begin{align}
	&\ \ 	u^{\bar{q}p}\pq \mm\nonumber \\&=u^{\bar{q}p}\left( \pq \ma -\q \mb \overline{\ma}^{-1}\p \overline{\mb}\right)+ \mb\overline{\ma}^{-1} u^{\bar{q}p}\left(\pq \overline{\ma}-\q \overline{\ma}\overline{\ma}^{-1}\p \overline{\ma}\right)\overline{\ma}^{-1}\overline{\mb} \nonumber\\
		&\hspace{2ex} - u^{\bar{q}p}\left( \pq \mb- \q \mb \overline{\ma}^{-1}\p \overline{\ma}\right)\overline{\ma}^{-1}\overline{\mb}-\mb \overline{\ma}^{-1}u^{\bar{q}p}\left( \pq \overline{\mb}-\q \overline{\ma}\overline{\ma}^{-1}\p\overline{\mb} \right)\nonumber\\
		&\hspace{2ex}  - u^{\bar{q}p}\mathscr{B}^{(p)}\overline{\ma}^{-1}\left( \mathscr{B}^{(q)}\right)^{*} .\label{upqmpq}
	\end{align}

	Similar to the computation for $u^{\bar{q}p}\pq M$, evaluating \eqref{upqmpq}  requires to compute the following relations:
	\begin{itemize}
		\item $u^{\bar{q}p} \left( \pq \ma -\q \mb\overline{\ma}^{-1}\p \overline{\mb}\right) $,
		\item $u^{\bar{q}p} \left(\partial_{p\bar{q}}\overline{\ma} -\partial_\qbar \overline{\ma} \overline{\ma}^{-1}\partial_{p}\overline{\ma}\right) $,
		\item $u^{\bar{q}p}\left(\partial_{p\bar{q}}\mb-\partial_{\bar{q}}\mb \overline{\ma}^{-1}\partial_p \overline{\ma}\right) $,
		\item $u^{\bar{q}p} \left(\pq \overline{\mb}-\q \overline{\ma}\overline{\ma}^{-1}\p\overline{\mb}\right)$.
	\end{itemize}
	
	We first compute the derivatives of $\ma$ and $\mb$. It is convenient to define $h(u) =\frac{1}{-2u}$, whose derivative is $h'=2h^2$.
	By direct computation, we get 
	\begin{align}
		\p \ma_{i\bar{j}}&= \p A_{i\bar{j}}+u_{ip}u_{\bar{j}}h+u_iu_{p\bar{j}}h+u_iu_{\bar{j}}u_ph'
		\nonumber\\
		&=\p A_{i\bar{j}}+hu_{\bar{j}}\mb_{ip}+hu_i\ma_{p\bar{j}}.\label{2_2_10_1}
	\end{align}
	Similarly, we have 
	\begin{align}
		\q \ma_{i\bar{j}}&=\q A_{i\bar{j}}+hu_{\bar{j}}\ma_{i\bar{q}}+hu_i\overline{\mb_{qj}},\label{2_2_10_2}\\
		\p \mb_{ij}&=\p B_{ij}+hu_{j}\mb_{ip}+hu_i\mb_{pj}             ,\label{2_2_10_3}\\
		\q \mb_{ij}&=\q B_{ij}+hu_j \ma_{i\bar{q}}+hu_i\ma_{j\bar{q}},\label{2_2_10_4}\\
		\p \mb_{\bar{i}\bar{j}}&=\p B_{\bar{i}\bar{j}}+hu_{\bar{j}}\ma_{p\bar{i}}+hu_{\bar{i}}\ma_{p\bar{j}}.
	\end{align}
	We plug these into 
	\begin{equation}
		\p \mm =\p \ma -\mathscr{B}^{(p)} \overline{\MA}^{-1}\overline{\mb}-\mb \overline{\ma}^{-1}\p\overline{\mb}.
	\end{equation}
	In the case that $\ma=I$ and $\mb=\mathrm{diag}(b_1,\cdots,b_n)$, for $b_k\in \mathbb{R}^{\ge 0}$, we get
	\begin{align}
		&\hspace{2ex} \p \mm_{i\bar{j}}\nonumber \\
		&=u_{i\bar{j}p}+hu_{\bar{j}}\mb_{ip}+hu_i\ma_{p\bar{j}}-b_j \mathscr{B}^{(p)}_{i{j}}-b_iu_{\bar{i}\bar{j}p}-hb_iu_{\bar{j}}\ma_{p\bar{i}}-hb_iu_{\bar{i}}\ma_{p\bar{j}}\nonumber\\
		&=u_{i\bar{j}p}-b_iu_{\bar{i}\bar{j}p}+hu_i\delta_{jp}-hb_iu_{\bar{i}}\delta_{jp}-b_j \mathscr{B}^{(p)}_{i{j}}.\label{naivepartialp_ijbar}
	\end{align}
	When $i=j=1$, we get
	\begin{equation}
		\p \mm_{1\bar{1}}=u_{1\bar{1}p}-b_1u_{\bar{1}\bar{1}p}+hu_1\delta_{1p}-hb_1u_{\bar{1}}\delta_{1p}-b_1 \mathscr{B}^{(p)}_{11}.
	\end{equation}
	
	Then we compute the second derivatives of $\ma_{i\bar{j}}$
	\begin{align}
		\pq \ma_{i\bar{j}}&=\pq A_{i\bar{j}}+h'u_{\bar{q}}u_{\bar{j}}\mb_{ip}+hu_{\bar{j}\bar{q}}\mb_{ip}+hu_{\bar{j}}\q \mb_{ip}\nonumber\\
		&\hspace{2ex}+h'u_{\bar{q}}u_i\ma_{p\bar{j}}+hu_{i\bar{q}}\ma_{p\bar{j}}+hu_i\q\ma_{p\bar{j}}.
	\end{align}
	We use $h'=2h^2$, $u_{\bar{j}\bar{q}}+hu_{\bar{j}}u_{\bar{q}}=\overline{\mb_{jq}}$ and $u_{i\bar{q}}+hu_iu_{\bar{q}}=\ma_{i\bar{q}}$, and get 
	\begin{align}
		&\pq \ma_{i\bar{j}}=\pq A_{i\bar{j}}+hu_{\bar{j}}\q B_{ip}+hu_i\q A_{p\bar{j}}+h\mb_{ip}\overline{\mb_{jq}}+h\ma_{i\bar{q}}\ma_{p\bar{j}} \nonumber\\
		&+h^2u_pu_{\bar{j}}\ma_{i\bar{q}}+2h^2u_iu_{\bar{j}}\ma_{p\bar{q}}+h^2u_iu_p\overline{\mb_{qj}}+h^2u_{\bar{j}}u_{\bar{q}}\mb_{ip}+h^2u_iu_{\bar{q}}\ma_{p\bar{j}}.\label{2_2_12}
	\end{align}
	As an analog of (\ref{Matrix_A_Equation_BB}), we need to compute 
	\begin{equation}\label{2_2_12_2}
		u^{\bar{q}p} \pq \ma - u^{\bar{q}p}\q \mb\overline{\ma}^{-1}\p \overline{\mb}
	\end{equation} since it appears in (\ref{upqmpq}). 
	When computing $	u^{\bar{q}p} \pq \ma$, we notice that on the right-hand side of (\ref{2_2_12}) 
	\begin{align}
  u^{\pqbar}(	hu_{\bar{j}}\q B_{ip}+hu_i\q A_{p\bar{j}})	=0
	\end{align}
	since $u^{\pqbar}	\q B_{ip}=u^{\pqbar}\q A_{p\bar{j}}	=0$. Then combining (\ref{2_2_12}) and (\ref{2_2_10_4}) yields 
	\begin{align}
		&\left( u^{\bar{q}p} \pq \ma - u^{\bar{q}p}\q \mb\overline{\ma}^{-1}\p \overline{\mb} \right)_{i\bar{j}}
		\nonumber\\
		&= u^{\bar{q}p} \pq \ma_{i\bar{j}}-u^{\bar{q}p} (\q \mb)_{is}\ma^{s\bar{t}}(\p \overline{\mb} )_{\bar{t}\bar{j}}\nonumber\\
		&=u^{\bar{q}p}u_{is\bar{q}}(A^{s\bar{t}}-\ma^{s\bar{t}})u_{\bar{j}\bar{t}p}+hu^{\bar{q}p}(\mb_{ip}\mb_{\bar{j}\bar{q}}+\ma_{i\bar{q}}\ma_{p\bar{j}})\nonumber\\
		&\hspace{2ex}-hu^{\bar{q}p}\ma^{s\bar{t}}(u_{\bar{t}}u_{is\bar{q}}\ma_{p\bar{j}} +u_su_{\bar{j}\bar{t}p}\ma_{i\bar{q}}) \nonumber\\
		&\hspace{2ex}+h^2u^{\bar{q}p}(u_iu_{\bar{j}}\ma_{p\bar{q}}+u_iu_p\mb_{\bar{j}\bar{q}}+u_{\bar{j}}u_{\bar{q}}\mb_{ip}-u_su_{\bar{t}}\ma^{s\bar{t}}\ma_{i\bar{q}}\ma_{p\bar{j}}).
	\end{align}
	Similar to  (\ref{Matrix_A_Equation_AA}), we want to compute
	\begin{equation}
		u^{\bar{q}p}\partial_{p\bar{q}}\overline{\ma} -u^{\bar{q}p}\partial_\qbar \overline{\ma} \overline{\ma}^{-1}\partial_{p}\overline{\ma}.
	\end{equation}
	Combining (\ref{2_2_12}), (\ref{2_2_10_1}) and (\ref{2_2_10_2}) yields
	\begin{align}
		&\left(   u^{\bar{q}p}\partial_{p\bar{q}}\overline{\ma} -u^{\bar{q}p}\partial_\qbar \overline{\ma} \overline{\ma}^{-1}\partial_{p}\overline{\ma} \right)_{\bar{i}j}\nonumber \\
		&= u^{\bar{q}p}\partial_{p\bar{q}}\overline{\ma}_{\bar{i}j}-u^{\bar{q}p}(\partial_\qbar\overline{\ma} )_{\bar{i}s}\ma^{s\bar{t}}(\partial_{p}\overline{\ma} )_{\bar{t}j}\nonumber\\
		&=u^{\bar{q}p}u_{s\bar{i}\bar{q}}(A^{s\bar{t}}-\ma^{s\bar{t}}) u_{j\bar{t}p}+hu^{\bar{q}p}(\mb_{jp}\mb_{\bar{i}\bar{q}}+\ma_{j\bar{q}}\ma_{p\bar{i}})\nonumber\\
		&\hspace{2ex} -hu^{\bar{q}p}\ma^{s\bar{t}}(u_{\bar{t}}u_{s\bar{i}\bar{q}}\mb_{jp}+u_su_{j\bar{t}p}\mb_{\bar{i}\bar{q}})\nonumber   \\
		&\hspace{2ex}+h^2u^{\bar{q}p}(u_ju_{\bar{i}}\ma_{p\bar{q}}+u_pu_{\bar{i}}\ma_{j\bar{q}}+u_ju_{\bar{q}}\ma_{p\bar{i}}-u_su_{\bar{t}}\ma^{s\bar{t}}\mb_{jp}\mb_{\bar{i}\bar{q}}).
	\end{align}
	Also, we need to compute 
	\begin{equation}\label{2_2_15}
		u^{\bar{q}p}\partial_{p\bar{q}}\mb-u^{\bar{q}p}\partial_{\bar{q}}\mb \overline{\ma}^{-1}\partial_p \overline{\ma}.
	\end{equation}
	Then, we compute the second derivatives of $\mb_{ij}$
	\begin{align}
		&\pq \mb_{ij}=\p \left( \q B_{ij}+hu_j \ma_{i\bar{q}}+hu_i\ma_{j\bar{q}} \right)\nonumber\\
		&=\pq B_{ij}+hu_j\p A_{i\bar{q}}+hu_i\p A_{j\bar{q}}+h\ma_{i\bar{q}}\mb_{jp}+h\ma_{j\bar{q}}\mb_{ip} \nonumber\\
		&+h^2u_pu_j\ma_{i\bar{q}}+h^2u_iu_p\ma_{j\bar{q}}+2h^2u_iu_j\ma_{p\bar{q}}+h^2u_ju_{\bar{q}}\mb_{ip}+h^2u_iu_{\bar{q}}\mb_{jp}.
	\end{align}
	It follows that 
	\begin{align}
		&  \left(  u^{\bar{q}p}\partial_{p\bar{q}}\mb-u^{\bar{q}p}\partial_{\bar{q}}\mb \overline{\ma}^{-1}\partial_p \overline{\ma} \right)_{ij}\nonumber\\
		&=u^{\bar{q}p}\pq \mb_{ij} -u^{\bar{q}p} (\partial_{\bar{q}}\mb)_{is}\ma^{s\bar{t}}(\partial_p \overline{\ma} )_{\bar{t}j}\nonumber\\
		&=u^{\bar{q}p}u_{is\bar{q}}(A^{s\bar{t}}-\ma^{s\bar{t}})u_{j\bar{t}p} +hu^{\bar{q}p} ( \ma_{i\bar{q}}\mb_{jp}+\ma_{j\bar{q}}\mb_{ip}) \nonumber\\
		&\hspace{2ex}-hu^{\bar{q}p}\ma^{s\bar{t}} (u_{\bar{t}}u_{is\bar{q}}\mb_{pj}+u_su_{j\bar{t}p}\ma_{i\bar{q}})\nonumber\\
		&\hspace{2ex} +h^2u^{\bar{q}p}(u_iu_j\ma_{p\bar{q}}+u_iu_p\ma_{j\bar{q}}+u_ju_{\bar{q}}\mb_{ip}-u_su_{\bar{t}}\ma^{s\bar{t}}\ma_{i\bar{q}}\mb_{jp}).
	\end{align}
	It remains to compute the last term,
	\begin{equation}
		\pq \overline{\mb}-\q \overline{\ma}\overline{\ma}^{-1}\p\overline{\mb}. 
	\end{equation}
	Then, we  compute the second derivatives of $\mb_{\bar{i}\bar{j}}$
	\begin{align}
		&\pq \mb_{\bar{i}\bar{j}} =\p (\q B_{\bar{i}\bar{j}}+hu_{\bar{i}}\mb_{\bar{q}\bar{j}}+hu_{\bar{j}}\mb_{\bar{q}\bar{i}})\nonumber\\
		&=\pq B_{\bar{i}\bar{j}}+hu_{\bar{i}}\p B_{\bar{j}\bar{q}}+hu_{\bar{j}}\p B_{\bar{i}\bar{q}}+h\ma_{p\bar{i}}\mb_{\bar{j}\bar{q}}+h\ma_{p\bar{j}}\mb_{\bar{q}\bar{i}}\nonumber\\
		&+h^2u_pu_{\bar{i}}\mb_{\bar{j}\bar{q}}+h^2u_pu_{\bar{j}}\mb_{\bar{i}\bar{q}}+2h^2u_{\bar{i}}u_{\bar{j}}\ma_{p\bar{q}}+h^2u_{\bar{j}}u_{\bar{q}}\ma_{p\bar{i}}+h^2u_{\bar{i}}u_{\bar{q}}\ma_{p\bar{j}}.
	\end{align}
	It follows that 
	\begin{align}
		& \left( u^{\bar{q}p} \pq \overline{\mb}-u^{\bar{q}p}\q \overline{\ma}\overline{\ma}^{-1}\p\overline{\mb}\right)_{\bar{i}\bar{j}}\nonumber\\
		&=u^{\bar{q}p}\pq \mb_{\bar{i}\bar{j}}- u^{\bar{q}p} (\q\overline{\ma})_{\bar{i}s} \ma^{s\bar{t}}(\p\overline{\mb})_{\bar{t}\bar{j}}\nonumber\\
		&=u^{\bar{q}p}u_{\bar{i}s\bar{q}}(A^{s\bar{t}}-\ma^{s\bar{t}})u_{\bar{j}\bar{t}p} +hu^{\bar{q}p} (\ma_{p\bar{i}}\mb_{\bar{j}\bar{q}}+ \ma_{p\bar{j}}\mb_{\bar{q}\bar{i}})\nonumber\\
		&\hspace{2ex}-hu^{\bar{q}p}\ma^{s\bar{t}}(u_{\bar{t}}u_{\bar{i}s\bar{q}}\ma_{p\bar{j}}+u_su_{\bar{j}\bar{t}p}\mb_{\bar{i}\bar{q}})\nonumber \\
		&\hspace{2ex}+h^2u^{\bar{q}p}( u_{\bar{i}}u_{\bar{j}}\ma_{p\bar{q}}+u_pu_{\bar{i}}\mb_{\bar{j}\bar{q}}+u_{\bar{q}}u_{\bar{j}}\ma_{p\bar{i}}-u_s u_{\bar{t}}\ma^{s\bar{t}}\mb_{\bar{i}\bar{q}}\ma_{p\bar{j}} ).
	\end{align}

	\section{Estimate for Minimal Eigenvalue of $\mm \ma^{-1}$}
	\label{sec:Estimate_Sigma}
	Denote
	\begin{align}
		\MK=\mm \ma^{-1}
	\end{align}
and denote the minimum eigenvalue of the matrix $\mm \ma^{-1}$ by $\sigma$.
We will find a lower bound estimate for $\sigma$ in this section. 

In subsection \ref{subsec:compute_SecondDerivative}, we compute $	u^{\pqbar}\sigma_{\pqbar}$ when the minimum eigenvalue of $\MK$ is different from the second minimum eigenvalue of $\MK$. We reduce the result to a quadratic form.

In subsection \ref{sec:2d}, we prove the non-positivity of the quadratic form above in the case of complex dimension 2; also we show that the sign could be negative in dimension 3. This is the only place where we require that the dimension is 2. This implies that $\sigma$ satisfies the following equation:
\begin{align}
	u^{\pqbar}\sigma_{\pqbar}\leq C(\sigma+|\nabla \sigma|)
\end{align}for a constant $C$ depending on the solution $u$ when the minimum eigenvalue of $\MK$ is different from the second minimum eigenvalue of $\MK$.

But the computation above is only available when the minimum eigenvalue of $\MK$ is different from the second minimum eigenvalue of $\MK$. In subsection \ref{sec:generalCase_comparison}, using a perturbation method, we prove the general situation.

	\subsection{Computation When the Minimal Eigenvalue of $\mm \ma^{-1}$ is Strictly Smaller Than the Second Minimal Eigenvalue of $\mm \ma^{-1}$ }
	\label{subsec:compute_SecondDerivative}
	In this section, we assume that the minimal eigenvalue of $\mm \ma^{-1}$ is strictly smaller than the second minimal eigenvalue. In this setting, the minimal eigenvalue $\sigma$ is a smooth function. We need to utilize the formula derived in Section \ref{sec:equation_for _secondDerivatives} and Lemma \ref{lem:derivates_min_eig}.
	
	We assume that, at the point ${\bf{z}}$ where the computation is carried out,
	\begin{equation}
		\ma =I, \text{ and  }  \mb= \mathrm{diag}(b_1,\cdots, b_n), \text{ with } b_i\in \mathbb{R}^{\ge 0}.
	\end{equation}
	This is always possible: we first choose local coordinates so that $\ma =I$, and then diagonalize $\mb$ by a unitary change of coordinates (see Lemma A.3 in \cite{hu25} or Corollary 4.4.4(c) in \cite{Horn-Johnson2013}).
	With this coordinate, we get 
	\begin{equation}
		\mm=I-\mb \mb^*=\mathrm{diag}\left( 1-b_1^2, \cdots, 1-b_n^2 \right)=\mathrm{diag}\left(\mu_1,\cdots,\mu_n\right),
	\end{equation}
	where we set $\mu_i = 1-b_i^2$. We assume $b_1$ is the maximum eigenvalue of $\mb$. Then the minimal eigenvalue of $\mm$ at ${\bf{z}}$ is 
	\begin{equation}
		\mu_1= (1-b_1^2).
	\end{equation}
	Also, we assume $\mu_1\ge 0$. In this section, we show, with a constant $C$,
	\begin{equation}
		u^{\bar{q}p}\pq\sigma\le C(\sigma+|\nabla \sigma|).
	\end{equation}
	Following the notation in \cite{Caffarelli-Friedman1985}, which is also adopted by \cite{Caffarelli-Guan-Ma2007} and \cite{Bian-Guan-Ma-Xu2011}, for two functions $f$, $h$, defined in an open set $S\subset \mathbb{C}^n$, and a point $y\in S$, we say 
	\begin{itemize}
		\item $(f - h )(y)\lesssim 0$, if $(f-h)(y) \le C\cdot\left[ \sigma (y) + |\nabla \sigma(y)| \right]$.
		\item $(f - h)(y)\sim 0$ or $f(y)\sim h(y)$, if $(f-h)(y) \lesssim 0$ and $(h-f)(y) \lesssim 0$.
		\item  $f-h \lesssim 0$, if $(f-h)(y) \lesssim 0$ for all $y\in S$.
		\item  $(f-h) \sim 0$ or $f\sim h$, if $(f-h)(y)\sim 0$ for all $y\in S$.
	\end{itemize}
	In this section, we aim to prove that
	\begin{equation}
		u^{\bar{q}p}\pq\sigma \lesssim 0.
	\end{equation}
	In what follows, all computations are carried out at the point ${\bf{z}}$. To simplify notation, whenever there is no ambiguity we omit the explicit dependence on ${\bf{z}}$: we write $(f-h)(z)\lesssim 0$ in place of $(f-h)\lesssim 0$, and $(f-h)(z)\sim 0$ in place of $(f-h)\sim 0$.
	Using Lemma B, we get 
	\begin{equation}\label{3.2-1}
		\pq\sigma= \pq (\mm \ma^{-1})_1^1-\sum_{s\neq1}\frac{\p (\mm \ma^{-1})_1^s \q (\mm \ma^{-1})_s^1}{\mu_s-\mu_1}
		-\sum_{s\neq 1}\frac{\q (\mm \ma^{-1})_1^s \p (\mm \ma^{-1})_s^1}{\mu_s-\mu_1}.
	\end{equation}
	We first compute $\p (\mm \ma^{-1})_1^s$ for $s\neq 1$ in the equation above,
	\begin{equation}
		\p (\mm \ma^{-1})_1^s=\left[ (\p \mm)\ma^{-1} -\mm \ma^{-1}\p \ma \ma^{-1} \right]_1^s.
	\end{equation}
	Using the assumption that $\mm$ is diagonal and $\ma=I$, the above expression can be simplified as 
	\begin{equation}
		\p (\mm \ma^{-1})_1^s({{\bf{z}}}) = \p \mm_{1\bar{s}}-\mu_1 \p \ma_{1\bar{s}}.
	\end{equation}
	Since $\sigma ({\bf{z}})=\mu_1$, we have $\mu_1 \sim 0$. Therefore, we get
	\begin{equation}\label{3.3-1}
		\p (\mm \ma^{-1})_1^s \sim \p \mm_{{1\bar{s}}}.
	\end{equation}
	Similarly, for $s\neq 1$, we get 
	\begin{equation}\label{3.3-2}
		\q (\mm \ma^{-1})_s^1({\bf{z}}) = \q \mm_{s\bar{1}}-\mu_s \q \ma_{s\bar{1}}.
	\end{equation}
	We plug (\ref{3.3-1}) and (\ref{3.3-2}) into the third term of (\ref{3.2-1}) to simplify it and get 
	\begin{equation}\label{3.4.1}
		\frac{\p (\mm \ma^{-1})_1^s \q (\mm \ma^{-1})_s^1}{\mu_s-\mu_1}\sim \frac{\p \mm_{{1\bar{s}}}\left(  \q \mm_{s\bar{1}}-\mu_s \q \ma_{s\bar{1}} \right)}{\mu_s-\mu_1}.
	\end{equation}
	Since $\mu_s >\mu_1 \ge 0$, we get $\frac{1}{\mu_s-\mu_1} \sim \frac{1}{\mu_s}$.  Then 
	\begin{equation}\label{3.4.2}
		\frac{\p (\mm \ma^{-1})_1^s \q (\mm \ma^{-1})_s^1}{\mu_s-\mu_1}\sim \left( \frac{\p \mm_{{1\bar{s}}}\q \mm_{s\bar{1}}}{\mu_s}-\p \mm_{{1\bar{s}}} \q \ma_{s\bar{1}} \right).
	\end{equation}
	To simplify the last term of (\ref{3.2-1}), for $s\neq 1$, we have
	\begin{align}
		\q (\mm \ma^{-1})_1^s &= \left[ (\q \mm)\ma^{-1} -\mm \ma^{-1}\q \ma \ma^{-1} \right]_1^s\\
		&=\left(\q \mm_{1\bar{s}}-\mu_1 \p \ma_{1\bar{s}}\right)\sim \q \mm_{1\bar{s}},\\
		\text{ and }\quad \p (\mm \ma^{-1})_s^1&=\left[ (\p \mm)\ma^{-1} -\mm \ma^{-1}\p \ma \ma^{-1} \right]_s^1 \\
		&= \p \mm_{s\bar{1}}-\mu_s \p \ma_{s\bar{1}}.
	\end{align}
	Similarly, it follows that 
	\begin{equation}\label{3.4.3}
		\frac{\q (\mm \ma^{-1})_1^s \p (\mm \ma^{-1})_s^1}{\mu_s-\mu_1}\sim \left( \frac{\q \mm_{{1\bar{s}}}\p \mm_{s\bar{1}}}{\mu_s}-\q \mm_{{1\bar{s}}} \p \ma_{s\bar{1}} \right).
	\end{equation}
	Then we compute the second term in (\ref{3.2-1}), 
	\begin{align}
		&\pq (\mm \ma^{-1})_1^1=\q \left[ (\p \mm)\ma^{-1} -\mm \ma^{-1}\p \ma \ma^{-1} \right]_1^1 \nonumber\\
		&\sim(\pq \mm)_{1\bar{1}}-\sum_{s\neq1}(\p \mm)_{1\bar{s}}(\q \ma)_{s\bar{1}}-\sum_{s\neq1}(\q \mm)_{1\bar{s}}(\p \ma)_{s\bar{1}}.\label{3.5-1}
	\end{align}
	Substituting \eqref{3.4.2}, \eqref{3.4.3}, and \eqref{3.5-1} into \eqref{3.2-1}, we see that the last term in \eqref{3.5-1} cancels with the last term in \eqref{3.4.3}, and the last term in \eqref{3.4.2} cancels with the second-to-last term in \eqref{3.5-1}.
	Then (\ref{3.2-1}) becomes 
	\begin{equation}\label{3.5-2}
		\pq \sigma \sim (\pq \mm)_{1\bar{1}} - \sum_{s\neq 1}\frac{\p \mm_{{1\bar{s}}}\q \mm_{s\bar{1}}}{\mu_s}- \sum_{s\neq1}\frac{\q \mm_{{1\bar{s}}}\p \mm_{s\bar{1}}}{\mu_s} .
	\end{equation}
	Then we use the result of Section 2 to compute $(\pq \mm )_{1\bar{1}}$,
	\begin{align*}
		& (\pq \mm)_{1\bar{1}} = \left[\pq \left( \ma-\mb \overline{\ma}^{-1}\overline{\mb}\right)  \right]_{1\bar{1}}\\
		&=\left( \pq \ma -\q \mb \overline{\ma}^{-1}\p \overline{\mb}\right)_{1\bar{1}}+\left[ \mb\overline{\ma}^{-1} \left(\pq \overline{\ma}-\q \overline{\ma}\overline{\ma}^{-1}\p \overline{\ma}\right)\overline{\ma}^{-1}\overline{\mb}\right]_{1\bar{1}} \\
		&-\left[ \left( \pq \mb- \q \mb \overline{\ma}^{-1}\p \overline{\ma}\right)\overline{\ma}^{-1}\overline{\mb}\right]_{1\bar{1}}-\left[\mb \overline{\ma}^{-1}\left( \pq \overline{\mb}-\q \overline{\ma}\overline{\ma}^{-1}\p\overline{\mb} \right)\right]_{1\bar{1}}\\
		& - \left[\mathscr{B}^{(p)}\overline{\ma}^{-1}\left( \mathscr{B}^{(q)}\right)^{*}\right]_{1\bar{1}}.
	\end{align*}
	Using the assumption that $\ma =I$ and $\mb =\mathrm{diag}(b_1,\cdots, b_n)$, and $b_1\sim 1$, we find 
	\begin{align}
		u^{\bar{q}p}\sigma_{p\bar{q}} 
		&\sim u^{\bar{q}p}\left( \pq \ma -\q \mb \overline{\ma}^{-1}\p \overline{\mb}\right)_{1\bar{1}}+u^{\bar{q}p}\left(\pq \overline{\ma}-\q \overline{\ma}\overline{\ma}^{-1}\p \overline{\ma}\right)_{1\bar{1}} \label{3.19}\\
		&\hspace{1ex}-u^{\bar{q}p}\left( \pq \mb- \q \mb \overline{\ma}^{-1}\p \overline{\ma}\right)_{1\bar{1}}-u^{\bar{q}p}\left( \pq \overline{\mb}-\q \overline{\ma}\overline{\ma}^{-1}\p\overline{\mb} \right)_{1\bar{1}}\label{3.20}\\
		&\hspace{1ex}-u^{\bar{q}p}\left[\mathscr{B}^{(p)}\overline{\ma}^{-1}\left( \mathscr{B}^{(q)}\right)^{*}\right]_{1\bar{1}} \\
		&\hspace{1ex}-\sum_{s\neq1}\frac{u^{\bar{q}p}}{\mu_s} \p \mm_{{1\bar{s}}}\q \mm_{s\bar{1}}-\sum_{s\neq1}\frac{u^{\bar{q}p}}{\mu_s}\q \mm_{{1\bar{s}}}\p \mm_{s\bar{1}}.
	\end{align}
	
First, we simplify (\ref{3.19}) and (\ref{3.20}).
		To facilitate readability, in Table \ref{ReadingTable} we put the components of (\ref{3.19}) and (\ref{3.20}) into four types.
	Specifying the indices $i=j=1$ for $\ma$ and $\mb$ in Section 2, we get 
	\begin{align}
		& u^{\bar{q}p}\left( \pq \ma - \q \mb \overline{\ma}^{-1}\p \overline{\mb}\right)_{1\bar{1}}= \underbrace{u^{\bar{q}p}u_{1s\bar{q}}(A^{s\bar{t}}-\ma^{s\bar{t}})u_{\bar{1}\bar{t}p} }_{\text{Type I}}\\
		&\underbrace{+hu^{\bar{q}p}(\mb_{1p}\mb_{\bar{1}\bar{q}}+\ma_{1\bar{q}}\ma_{p\bar{1}})}_{Type II} 
		\underbrace{-hu^{\bar{q}p}(u_{\bar{t}}u_{1s\bar{q}}\ma_{p\bar{1}} +u_su_{\bar{1}\bar{t}p}\ma_{1\bar{q}})\ma^{s\bar{t}}}_{Type III} \\
		& \underbrace{+h^2u^{\bar{q}p}(u_1u_{\bar{1}}\ma_{p\bar{q}}+u_1u_p\mb_{\bar{1}\bar{q}}+u_{\bar{1}}u_{\bar{q}}\mb_{1p}-u_su_{\bar{t}}\ma^{s\bar{t}}\ma_{1\bar{q}}\ma_{p\bar{1}})}_{ Type IV}.
	\end{align}
	We put these terms into the first row of the Table \ref{ReadingTable}. Then we have
	\begin{align}
		&u^{\bar{q}p}\left(\pq \overline{\ma}-\q \overline{\ma}\overline{\ma}^{-1}\p \overline{\ma}\right)_{\bar{1}1}=\underbrace{u^{\bar{q}p}u_{s\bar{1}\bar{q}}(A^{s\bar{t}}-\ma^{s\bar{t}}) u_{1\bar{t}p}}_{\text{Type I}} \\
		&\underbrace{+hu^{\bar{q}p}(\mb_{1p}\mb_{\bar{1}\bar{q}}+\ma_{1\bar{q}}\ma_{p\bar{1}})}_{Type II}
		\underbrace{-hu^{\bar{q}p}(u_{\bar{t}}u_{s\bar{1}\bar{q}}\mb_{1p}+u_su_{1\bar{t}p}\mb_{\bar{1}\bar{q}})\ma^{s\bar{t}}}_{ Type III}\\
		&  \underbrace{+h^2u^{\bar{q}p}(u_1u_{\bar{1}}\ma_{p\bar{q}}+u_pu_{\bar{1}}\ma_{1\bar{q}}+u_1u_{\bar{q}}\ma_{p\bar{1}}-u_su_{\bar{t}}\ma^{s\bar{t}}\mb_{1p}\mb_{\bar{1}\bar{q}})}_{Type IV}.
	\end{align}
	We put these terms into the second row of the Table \ref{ReadingTable}.
	Similarly, we get 
	\begin{align}
		&-u^{\bar{q}p}\left( \pq \mb- \q \mb \overline{\ma}^{-1}\p \overline{\ma}\right)_{11}=\underbrace{-u^{\bar{q}p}u_{1s\bar{q}}(A^{s\bar{t}}-\ma^{s\bar{t}})u_{1\bar{t}p}}_{Type I} \\
		&\underbrace{-hu^{\bar{q}p} ( \ma_{1\bar{q}}\mb_{1p}+\ma_{1\bar{q}}\mb_{1p})}_{Type II} \underbrace{+h u^{\bar{q}p}(u_{\bar{t}}u_{1s\bar{q}}\mb_{p1}+u_su_{1\bar{t}p}\ma_{1\bar{q}})\ma^{s\bar{t}}}_{Type III} \\
		&\underbrace{-h^2u^{\bar{q}p}(u_1u_1\ma_{p\bar{q}}+u_1u_p\ma_{1\bar{q}}+u_1u_{\bar{q}}\mb_{1p}-u_su_{\bar{t}}\ma^{s\bar{t}}\ma_{1\bar{q}}\mb_{1p})}_{Type IV}.
	\end{align}
	We put these terms into the third row of Table \ref{ReadingTable}.
	We also get 
	\begin{align}
		& -u^{\bar{q}p}\left( \pq \overline{\mb}-\q \overline{\ma}\overline{\ma}^{-1}\p\overline{\mb} \right)_{\bar{1}\bar{1}}=\underbrace{-u^{\bar{q}p}u_{\bar{1}s\bar{q}}(A^{s\bar{t}}-\ma^{s\bar{t}})u_{\bar{1}\bar{t}p}}_{Type I} \\
		&\underbrace{-hu^{\bar{q}p} (\ma_{p\bar{1}}\mb_{\bar{1}\bar{q}}+ \ma_{p\bar{1}}\mb_{\bar{q}\bar{1}})}_{Type II} \underbrace{+hu^{\bar{q}p}(u_{\bar{t}}u_{\bar{1}s\bar{q}}\ma_{p\bar{1}}+u_su_{\bar{1}\bar{t}p}\mb_{\bar{1}\bar{q}})\ma^{s\bar{t}}}_{Type III} \\
		&\underbrace{-h^2u^{\bar{q}p}( u_{\bar{1}}u_{\bar{1}}\ma_{p\bar{q}}+u_pu_{\bar{1}}\mb_{\bar{1}\bar{q}}+u_{\bar{q}}u_{\bar{1}}\ma_{p\bar{1}}-u_s u_{\bar{t}}\ma^{s\bar{t}}\mb_{\bar{1}\bar{q}}\ma_{p\bar{1}} )}_{Type IV}.
	\end{align}
	We put these terms into the fourth row of Table \ref{ReadingTable}.
	We then sum these components of $u^{\bar{q}p}\sigma_{p\bar{q}}$, place the sum in the last row of Table \ref{ReadingTable}.
	By direct computation, the sum of the Type II  contributions among the four terms in \eqref{3.19} and \eqref{3.20} is
	\begin{equation}
		\sim 2hu^{\bar{1}1}+ 2hu^{\bar{1}1}- 2hu^{\bar{1}1}- 2hu^{\bar{1}1}\sim 0,
	\end{equation}
	as clearly displayed in Table \ref{ReadingTable}. Similarly,  the sum of the Type III  contributions among the four terms in \eqref{3.19} and \eqref{3.20} is
	\begin{equation*}
		\sim 
		\left\{
		\begin{aligned}
			& -h\sum_t(u_{\bar{t}}u_{1t\bar{q}}u^{\bar{q}1}+u_tu_{\bar{1}\bar{t}p}u^{\bar{1}p}) -h\sum_t(u_{\bar{t}}u_{t\bar{1}\bar{q}}u^{\bar{q}1}+u_tu_{1\bar{t}p}u^{\bar{1}p})\\
			&+h\sum_t(u_{\bar{t}}u_{1t\bar{q}}u^{\bar{q}1}+u_tu_{1\bar{t}p}u^{\bar{1}p})+h\sum_t(u_{\bar{t}}u_{\bar{1}t\bar{q}}u^{\bar{q}1}+u_{t}u_{\bar{1}\bar{t}p}u^{\bar{1}p})\\
		\end{aligned}
		\right\}
		\sim 0.
	\end{equation*}
	The terms of Type I and Type IV should be treated together with other parts of $\Lsigma$.
	
	We then simplify the sum of the Type I terms, which is 
	\begin{equation}\label{3.38}
		u^{\bar{q}p}(u_{1s\bar{q}}-u_{\bar{1}s\bar{q}})(A^{s\bar{t}}-\ma^{s\bar{t}})(u_{\bar{1}\bar{t}p}-u_{1\bar{t}p}).
	\end{equation}
Denote 
	\begin{equation}
		W_{s\bar{q}}=u_{1s\bar{q}}-u_{\bar{1}s\bar{q}}.
	\end{equation}
	Then $W_{s\bar{q}}$ is a skew-Hermitian matrix, i.e., $ W_{s\bar{q}}=-\overline{W_{q\bar{s}}}$. With this notation, \eqref{3.38} becomes
	\begin{equation}
		u^{\bar{q}p}W_{s\bar{q}}(A^{s\bar{t}}-\ma^{s\bar{t}})\overline{W_{t\bar{p}}}.
	\end{equation}

		\begin{table}
			\centering
			\caption{4 types of components of \(u^{\bar{q}p}\sigma_{p\bar{q}}\) and their summation.}

			\renewcommand{\arraystretch}{1.20}
			\setlength{\tabcolsep}{6pt}
			\small 
			
			\begingroup
			\binoppenalty=10000
			\relpenalty=10000
			
			\begin{adjustbox}{max width=\linewidth,center}
				\begin{tabular}{@{}l l l l l@{}}
					\toprule	\label{ReadingTable}
					Components of $u^{\bar{q}p}\sigma_{p\bar{q}}$ & Type I & Type II & Type III & Type IV \\
					\midrule
					$u^{\bar{q}p}\left( \pq \ma - \q \mb \overline{\ma}^{-1}\p \overline{\mb}\right)_{1\bar{1}}$ & $u^{\bar{q}p}u_{1s\bar{q}}(A^{s\bar{t}}-\ma^{s\bar{t}})u_{\bar{1}\bar{t}p}$ & $\sim 2hu^{\bar{1}1}$ & $\sim -h\sum_t(u_{\bar{t}}u_{1t\bar{q}}u^{\bar{q}1}+u_tu_{\bar{1}\bar{t}p}u^{\bar{1}p})$ & \makecell[l]{$\sim h^2\sum_t (u^{\bar{t}t}u_1 u_{\bar{1}}+u^{\bar{1}t}u_1u_t$\\
						$\qquad\qquad+u^{\bar{t}1}u_{\bar{1}}u_{\bar{t}}-u^{\bar{1}1}u_{t}u_{\bar{t}})$}  \\
					\midrule
					$u^{\bar{q}p}\left(\pq \overline{\ma}-\q \overline{\ma}\overline{\ma}^{-1}\p \overline{\ma}\right)_{\bar{1}1}$ & $u^{\bar{q}p}u_{s\bar{1}\bar{q}}(A^{s\bar{t}}-\ma^{s\bar{t}}) u_{1\bar{t}p}$ & $\sim 2hu^{\bar{1}1}$ & $  \sim -h\sum_t(u_{\bar{t}}u_{t\bar{1}\bar{q}}u^{\bar{q}1}+u_tu_{1\bar{t}p}u^{\bar{1}p})$ & \makecell[l]{$\sim h^2\sum_t ( u^{\bar{t}t}u_1u_{\bar{1}}+u^{\bar{1}t}u_tu_{\bar{1}}$\\
						$\qquad\qquad+u^{\bar{t}1}u_1u_{\bar{t}}-u^{\bar{1}1}u_tu_{\bar{t}})$} \\
					\midrule
					$-u^{\bar{q}p}\left( \pq \mb- \q \mb \overline{\ma}^{-1}\p \overline{\ma}\right)_{11}$ & $-u^{\bar{q}p}u_{1s\bar{q}}(A^{s\bar{t}}-\ma^{s\bar{t}})u_{1\bar{t}p}$ & $\sim -2hu^{\bar{1}1}$ & $\sim h\sum_t(u_{\bar{t}}u_{1t\bar{q}}u^{\bar{q}1}+u_tu_{1\bar{t}p}u^{\bar{1}p})$ & \makecell[l]{$\sim-h^2\sum_t(u^{\bar{t}t}u_1u_1+u^{\bar{1}t}u_1u_t$\\
						$\qquad\qquad+u^{\bar{t}1}u_1u_{\bar{t}}-u^{\bar{1}1}u_tu_{\bar{t}})$} \\
					\midrule
					$-u^{\bar{q}p}\left( \pq \overline{\mb}-\q \overline{\ma}\overline{\ma}^{-1}\p\overline{\mb} \right)_{\bar{1}\bar{1}}$ & $-u^{\bar{q}p}u_{\bar{1}s\bar{q}}(A^{s\bar{t}}-\ma^{s\bar{t}})u_{\bar{1}\bar{t}p}$ & $\sim -2hu^{\bar{1}1}$& $\sim h\sum_t(u_{\bar{t}}u_{\bar{1}t\bar{q}}u^{\bar{q}1}+u_{t}u_{\bar{1}\bar{t}p}u^{\bar{1}p})$ & \makecell[l]{$\sim -h^2\sum_t( u^{\bar{t}t}u_{\bar{1}}u_{\bar{1}}+u^{\bar{1}t}u_tu_{\bar{1}}$\\
						$\qquad\qquad+u^{\bar{t}1}u_{\bar{t}}u_{\bar{1}}-u^{\bar{1}1}u_t u_{\bar{t}})$}  \\
					\midrule
					Sum of the above & $u^{\bar{q}p}W_{s\bar{q}}(A^{s\bar{t}}-\ma^{s\bar{t}})\overline{W_{t\bar{p}}}$ & $\sim 0$ & $\sim 0$ & $\sim h^2|u_1-u_{\bar{1}}|^2\sum_t u^{\bar{t}t}$\\
					\bottomrule
				\end{tabular}
			\end{adjustbox}
			
			\endgroup
		\end{table}

	To prove $u^{\bar{q}p}\pq \sigma \lesssim 0 $, it remains to show that
	\begin{align}
			&u^{\bar{q}p}W_{s\bar{q}}(A^{s\bar{t}}-\ma^{s\bar{t}})\overline{W_{t\bar{p}}}+h^2|u_1-\uonebar|^2\sum_su^{s\sbar} -\sum_{s\neq1}\frac{u^{\bar{q}p}}{\mu_s} \p \mm_{{1\bar{s}}}\q \mm_{s\bar{1}}\nonumber \\
			&-u^{\bar{q}p}\left[\mathscr{B}^{(p)}\overline{\ma}^{-1}\left( \mathscr{B}^{(q)}\right)^{*}\right]_{1\bar{1}} -\sum_{s\neq1}\frac{u^{\bar{q}p}}{\mu_s}\q \mm_{{1\bar{s}}}\p \mm_{s\bar{1}}	\lesssim 0.	\label{3.41}
		\end{align}

	In (\ref{naivepartialp_ijbar}) we let $i=1$ and $j=s$ and get
	\begin{align}
		\partial \MM_{1\sbar}=W_{p\sbar}+h(u_1-u_\onebar)\delta_{ps}-b_s\mathscr{B}^{( p)}_{ 1s},
	\end{align}
	which is a combination of $h(u_1-u_\onebar)$ and entries of $W$ and $\mathscr{B}^{( p)}_{ }$. It turns out that $\partial_p\MM_{s\onebar}$ is also a linear combinations of these terms. In (\ref{naivepartialp_ijbar}) if we directly replace $j$ by 1 and $i$ by $s$, we get
	\begin{align}
     \partial_p\MM_{s\onebar}=u_{sp\onebar}-b_su_{\sbar \onebar p}+hu_s\delta_{1p}-h b_su_{\sbar} \delta_{1p}-\mathscr{B}^{( p)}_{ s1}		.
	\end{align}
	It's not obvious that the above expression is a combination of $h(u_1-u_\onebar)$ and entries of $W$ and $\mathscr{B}^{( p)}_{ }$ but it's true if we switch from $\mathscr{B}^{( p)}_{ s1}$ to  $\mathscr{B}^{( p)}_{ 1s}$. First, directly differentiating $\MM=\MA-\MB\overline{\MA^{-1}}\ \overline{\MB}$ gives
	\begin{align}
\partial_p\MM_{s\onebar}=\partial_p\MA_{s\onebar}-\mathscr{B}^{( p)}_{ s1}-b_s\partial_p\overline{\MB_{s1}}.
\label{direct_diff_M}
	\end{align}
	We specify indices in (\ref{Notation_sb}) and get
	\begin{align}
		\mathscr{B}^{( p)}_{s1 }&=\partial_p\MB_{s1}-b_s\partial_p\MA_{1\sbar},\\
			\mathscr{B}^{( p)}_{1s}&=\partial_p\MB_{1s}-\partial_p\MA_{s\onebar}.
	\end{align}
	We notice that $\partial_p\MB_{s1}=\partial_p\MB_{1s}$, so
	\begin{align}
			\mathscr{B}^{( p)}_{s1 }=	\mathscr{B}^{( p)}_{1s}+\partial_p\MA_{s\onebar}-b_s\partial_p\MA_{1\sbar}.
	\end{align}
   Plugging this into (\ref{direct_diff_M}), we get
   \begin{align}
   	\partial_p\MM_{s\onebar}=&-\mathscr{B}^{(p )}_{ 1s}+b_s\left(\partial_p\MA_{1\sbar}-\partial_p\overline{\MB_{s1}}\right)\\
   	=&-\mathscr{B}^{(p )}_{ 1s}+b_sW_{p\sbar}+b_s\delta_{ps}h(u_1-u_\onebar).
   	\label{H_firstAppear}
   \end{align}

Now see the left-hand side of the inequality (\ref{3.41}) is a quadratic form of $W_{\pqbar}$, $\mathscr{B}^{(p)}_{1s}$ and $h(u_1-\uonebar)$. In the remaining part of this subsection, we reorganize it and in next subsection, we prove it's non-positive as a quadratic form only in the case of complex dimension 2.
Denote 
\begin{align}
	Q_1&=u^{\bar{q}p}W_{s\bar{q}}(A^{s\bar{t}}-\ma^{s\bar{t}})\overline{W_{t\bar{p}}},\\
	Q_2&=h^2|u_1-\uonebar|^2\sum_t u^{\bar{t}t},\\
	Q_3&=-\sum_{s\neq1}\frac{u^{\bar{q}p}}{\mu_s} \p \mm_{{1\bar{s}}}\q \mm_{s\bar{1}},\\
	Q_4&=-u^{\bar{q}p}\left[\mathscr{B}^{(p)}\overline{\ma}^{-1}\left( \mathscr{B}^{(q)}\right)^{*}\right]_{1\bar{1}} ,
\end{align}
and let 
$Q=Q_1+Q_2+Q_3+Q_4$. Then	\begin{align}
		u^{\bar{q}p}\sigma_{p\bar{q}}  \sim Q -\sum_{s\neq1}\frac{u^{\bar{q}p}}{\mu_s}\q \mm_{{1\bar{s}}}\p \mm_{s\bar{1}}.
	\end{align}
	In the following, we will work on each $Q_i$, making them easier to use. Then in section \ref{sec:2d} we prove that $Q\lesssim0$ in the case of complex dimension 2. We demonstrate the process using Figure \ref{fig:Process_Q_decompose}. The final result is in the dashed square.
		\begin{figure}[h]
		\centering
		\includegraphics[height=5.5cm]{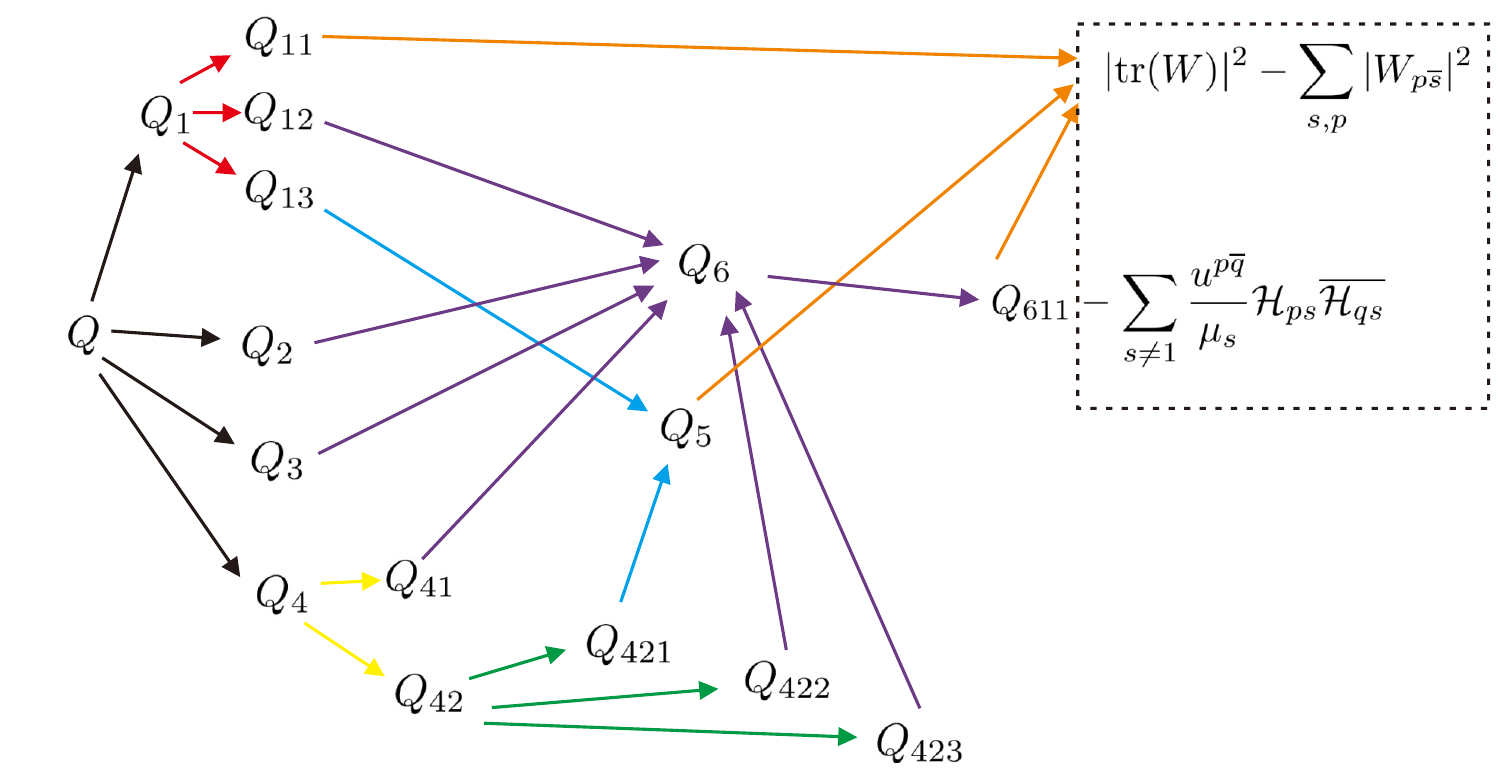}
		\caption{Transformation of $Q$}
		\label{fig:Process_Q_decompose}
	\end{figure}
	
	({\bf Step 1. Red Arrows in Figure \ref{fig:Process_Q_decompose}})
	To simplify $Q_1$, we need to use the relation between $A$ and $\MA$. Since
	\begin{equation}
		A= \ma - h(u_i u_{\bar{j}}), \text{  and } \ma=I,
	\end{equation}
	it follows that 
	\begin{align}
		A^{-1}=I+ \frac{h(u_iu_{\bar{j}})}{1-h\sum_k u_k u_{\bar{k}}}  \overset{\triangle}{=}  I+(r_ir_{\bar{j}}),
	\end{align}
	where $r_i= \sqrt{\frac{h}{1-h\sum_ku_ku_{\bar{k}}}}u_i$ and $r_{\bar{j}}=\overline{r_j}$ . With this structure, we get 
	\begin{equation}
		u^{\bar{q}p}=\delta^{pq}+r_p r_{\bar{q}} \;\text{ and }\; A^{s\bar{t}}-\ma^{s\bar{t}}=r_sr_{\bar{t}}.
	\end{equation}
	Then 
	\begin{align}
	Q_1
		&=\sum_{p,q,s,t}(\delta^{pq}+r_p r_{\bar{q}})W_{s\bar{q}}(r_sr_{\bar{t}})\overline{W_{t\bar{p}}} \\
		&=\sum_{p,q,s,t}\delta^{pq}W_{s\bar{q}}(r_sr_{\bar{t}})\overline{W_{t\bar{p}}}+\sum_{p,q,s,t}r_p r_{\bar{q}}W_{s\bar{q}}(r_sr_{\bar{t}})\overline{W_{t\bar{p}}}.\label{3.12.1}
	\end{align}
	Using $W_{p\bar{s}}u^{\bar{s}p}=0$, we can simplify the second term in the above identity,
	\begin{align}
		\sum_{p,q,s,t} r_p r_{\bar{q}}W_{s\bar{q}}(r_sr_{\bar{t}})\overline{W_{t\bar{p}}}&=  \sum_{p,q,s,t} r_s r_{\bar{q}}W_{s\bar{q}}(r_pr_{\bar{t}})\overline{W_{t\bar{p}}}\\
		&= \sum_{p,q,s,t}  - (u^{\bar{q}s}-\delta^{qs})W_{s\bar{q}}(u^{\bar{t}p}-\delta^{tp}) W_{p\bar{t}}\\
		&= - \delta^{qs}W_{s\bar{q}}\delta^{tp}W_{p\bar{t}}\\
		&=   \delta^{qs}W_{s\bar{q}} \delta^{tp} \overline{W_{t\bar{p}}}=   \bigl|\mathrm{tr}\bigl( W_{t\bar{q}}\bigr)\bigr|^2 \triangleq Q_{11}   .    \label{3.12.2}
	\end{align}
	Relabeling the summation indices in the first term of \eqref{3.12.1} yields
	\begin{equation}\label{3.12.3}
		\sum_{s,t,p,q}\delta^{pq}W_{s\bar{q}}(r_sr_{\bar{t}})\overline{W_{t\bar{p}}}=  \sum_{t\neq 1,p,q}r_pr_{\bar{q}}W_{p\bar{t}}\overline{W_{q\bar{t}}}+\sum_{p,q}r_pr_{\bar{q}}W_{p\bar{1}}\overline{W_{q\bar{1}}}.
	\end{equation}
	We denote the two terms on the right-hand side of the above equation by $Q_{12}$ and $Q_{13}$ respectively. We lable them differently since $Q_{13}$ cancels with other terms.

		({\bf Step 2. Yellow Arrows in Figure \ref{fig:Process_Q_decompose}})
		In $Q_4$, terms with $s\neq 1$ should be treated differently from those with $s=1$. 
		\begin{align}
			Q_4&=-\sum_{p,q,s}u^{\pqbar} \mathscr{B}^{(p)}_{1s}\overline{\mathscr{B}^{(q)}_{1s}}\\
			&=-\sum_{s\neq1, p,q}u^{\pqbar} \mathscr{B}^{(p)}_{1s}\overline{\mathscr{B}^{(q)}_{1s}}-\sum_{p,q}u^{\pqbar} \mathscr{B}^{(p)}_{11}\overline{\mathscr{B}^{(q)}_{11}}.
		\end{align}
		Denote the last two terms by $Q_{41}$ and $Q_{42}$ respectively.

			({\bf Step 3. Green Arrows in Figure \ref{fig:Process_Q_decompose}})
			Term $Q_{42}$ can be further simplified.
		    Since $\partial_p\sigma\sim0$ and at the point of computation we assume $\MK^1_1=\MM_{1\qbar}\MA^{1\qbar}$, we have
		    \begin{align}
		    	\partial_p\MM_{1\onebar}\sim 0.
		    \end{align}
		    More explicitely, this is
		    \begin{align}
		    	W_{p\onebar}+h(u_1-u_{\onebar})\delta_{p1}- \mathscr{B}^{(p)}_{11}\sim0.
		    						\label{relation_WB}
		    \end{align}
			This provides a relation between $	W_{p\onebar}+h(u_1-u_{\onebar})\delta_{p1}$ and $\mathscr{B}^{(p)}_{11}$. Plugging this into $Q_{42}$, we get
			\begin{align}
				Q_{42}&\sim-u^{\pqbar}(W_{p\onebar}+h(u_1-u_\onebar)\delta_{p1})\overline{(W_{q\onebar}+h(u_1-u_\onebar)\delta_{q1})}\\
				&=-u^{\pqbar}W_{p\onebar}\overline{W_{q\onebar}}-u^{1\qbar}h(u_1-u_\onebar)\overline{W_{q\onebar}}-u^{p\onebar}W_{p\onebar}\overline{h(u_1-u_{\onebar})}\nonumber\\
				&\ \ \ \ \ \ \ \ \ \ \ \ \ \ \ \ \ \ \ \ \ \ \ \ \ \ \ \ \ \ \ \ \ \ \ \ \ \ \ \ \ \ \ \ \ \ \ \ \ \ \  \ \ \ \ \ \ \ \ \ -u^{1\onebar}|h(u_1-u_\onebar)|^2\\
				&\triangleq Q_{421}+Q_{422}+Q_{423},
			\end{align}
			 where 
			 \begin{align}
			 	Q_{421}&=-u^{\pqbar}W_{p\onebar}\overline{W_{q\onebar}},\\
			 	 Q_{422}&=-u^{1\qbar}h(u_1-u_\onebar)\overline{W_{q\onebar}}-u^{p\onebar}W_{p\onebar}\overline{h(u_1-u_{\onebar})},\label{firstExpression_Q422}\\
			 	 Q_{423}&=-u^{1\onebar}|h(u_1-u_\onebar)|^2.
			 \end{align}
			 We need to transform $Q_{422}$ using the relation $u^{\pqbar}W_{\pqbar}=0$, which says
			 \begin{align}
			 	\sum_{s\neq 1}u^{p\sbar}W_{p\sbar}\sim-u^{p\onebar}W_{p\onebar}.
			 \end{align}
			 Plugging this into (\ref{firstExpression_Q422}), we get
			 \begin{align}
			 	Q_{422}=\sum_{s\neq 1}u^{s\qbar}h(u_1-u_\onebar)\overline{W_{q\sbar}}
			 	             +  \sum_{s\neq 1}u^{p\sbar}W_{p\sbar}\overline{h(u_1-u_{\onebar})}.
			 \end{align}
			
				({\bf Step 4. Blue Arrows in Figure \ref{fig:Process_Q_decompose}})
				By putting $Q_{13}$ and $Q_{421}$ together we can make them simplier:
				\begin{align}
					Q_{13}+Q_{421}=r^p\rqbar W_{p\onebar}\overline{W_{q\onebar}}-u^{\pqbar}W_{p\onebar}\overline{W_{q\onebar}}=-\sum_p|W_{p\onebar}|^2.
				\end{align}

					({\bf Step 5. Purple Arrows in Figure \ref{fig:Process_Q_decompose}})
					
					Denote that 
					\begin{align}
						Q_6=Q_{12}+Q_2+Q_3+Q_{41}+Q_{422}+Q_{423}.
						\label{Q6Sum}
					\end{align}
					In the summands on the right-hand side of (\ref{Q6Sum}), we collect all the terms which are quadratic forms of $W$ and denote their summation by $Q_{61}$:
					\begin{align}
						Q_{61}=\sum_{s\neq 1}W_{p\sbar}\overline{W_{q\sbar}}\left(r^p\rqbar-\frac{u^{\pqbar}}{\mu_s}\right).
					\end{align}
						Using $\mu_s=1-b_s^2$, the above expression can be simplified
					\begin{equation}
						r_pr_{\bar{q}}-\frac{u^{\bar{q}p}} {\mu_s}  =  r_pr_{\bar{q}}-u^{\bar{q}p}-\frac{u^{\bar{q}p}b_s^2}{\mu_s}=-\delta^{pq}-\frac{u^{\bar{q}p}b_s^2}{\mu_s}.
					\end{equation}
				Then we denote $Q_{61}=Q_{611}+Q_{612}.$ Where
				\begin{align}
					Q_{611}&=-\sum_{s\neq 1, p}W_{p\sbar}\overline{W_{p\sbar}},\\
				   	Q_{612}&=-\sum_{s\neq 1, p,q}\frac{u^{p\qbar}b_s^2}{\mu_s}W_{p\sbar}\overline{W_{q\sbar}}.
				\end{align}
				In (\ref{Q6Sum}), we put the terms containing both $W$ and $\mathscr{B}^{(p)}$ together and denote their sum by $Q_{62}$:
				\begin{align}
					Q_{62}=\sum_{s\neq 1, p, q} W_{p\sbar} \overline{  \mathscr{B}^{(q)}_{1s} } b_s\frac{u^{\pqbar}}{\mu_s}+\overline{W_{q\sbar} }{  \mathscr{B}^{(p)}_{1s} } b_s\frac{u^{\pqbar}}{\mu_s}
				\end{align}
					In (\ref{Q6Sum}), we put the terms which are quadratic forms of $\mathscr{B}^{(p)}$  together and denote their summation by $Q_{63}$:
					\begin{align}
						Q_{63}=-\sum_{s\neq 1, p, q}u^{\pqbar} \mathscr{B}^{(p)}_{1s}\overline{\mathscr{B}^{(q)}_{1s}}
						-\sum_{s\neq 1, p, q}u^{\pqbar} \frac{b_s^2}{\mu_s}\mathscr{B}^{(p)}_{1s}\overline{\mathscr{B}^{(q)}_{1s}}.
					\end{align}On the right-hand side of the equation above, two summands are contributed by $Q_{41}$ and $Q_3$ respectively.
					 Using $\mu_s=1-b_s^2$, $Q_{63}$ can be simplified:
					 \begin{align}
					 	Q_{63}=	-\sum_{s\neq 1, p, q} \frac{u^{\pqbar}}{\mu_s}\mathscr{B}^{(p)}_{1s}\overline{\mathscr{B}^{(q)}_{1s}}.
					 \end{align}
					 
					In (\ref{Q6Sum}), we put the terms which contain both $h(u_1-u_\onebar)$ and $W$ together and denote their summation by $Q_{64}$:
					\begin{align}
						Q_{64}=&\left(\sum_{s\neq 1, q}u^{s\qbar}\overline{W_{q\sbar}}{h(u_1-u_\onebar)}+\sum_{s\neq 1, q}u^{p\sbar}{W_{p\sbar}}\overline{h(u_1-u_{\onebar})}\right)\nonumber\\
						&\ -\left(\sum_{s\neq 1, q}\frac{u^{s\qbar}}{\mu_s}\overline{W_{q\sbar}}{h(u_1-u_\onebar)}+\sum_{s\neq 1, q}\frac{u^{p\sbar}}{\mu_s}{W_{p\sbar}}\overline{h(u_1-u_{\onebar})}\right).
					\end{align}
					Terms in the two brackets in the equation above are contributed by $Q_{422}$ and $Q_3$ respectively. 	 Using $\mu_s=1-b_s^2$, $Q_{64}$ can be simplified:
							\begin{align}
						Q_{64}=-\left(\sum_{s\neq 1, q}\frac{u^{s\qbar}b_s^2}{\mu_s}\overline{W_{q\sbar}}{h(u_1-u_\onebar)}+\sum_{s\neq 1, q}\frac{u^{p\sbar}b_s^2}{\mu_s}{W_{p\sbar}}\overline{h(u_1-u_{\onebar})}\right).
					\end{align}

						In (\ref{Q6Sum}), we put the terms which contain both $h(u_1-u_\onebar)$ and $\mathscr{B}^{( p)}$ together and denote their summation by $Q_{65}$:
								\begin{align}
						Q_{65}=-\left(\sum_{s\neq 1, q}\frac{u^{s\qbar}b_s}{\mu_s}\overline{\mathscr{B}^{(q )}_{ 1s}}{h(u_1-u_\onebar)}
						   +\sum_{s\neq 1, q}\frac{u^{p\sbar}b_s}{\mu_s}{\mathscr{B}^{( p)}_{1s }}\overline{h(u_1-u_{\onebar})}\right).
					\end{align}
							In (\ref{Q6Sum}), we put the terms which contain only $h(u_1-u_\onebar)$ together and denote their summation by $Q_{66}$:
							\begin{align}
								Q_{66}=|h(u_1-u_\onebar)|^2\sum_su^{s\sbar}
								-|h(u_1-u_\onebar)|^2\sum_{s\neq 1}\frac{u^{s\sbar}}{\mu_s}
								-|h(u_1-u_\onebar)|^2u^{1\onebar}.
								\label{Q66initial}
							\end{align}
							Three terms on the right-hand side of (\ref{Q66initial}) are contributed by $Q_2, Q_3, Q_{23}$ respectively. After cancellation and simplification, we get
							\begin{align}
								Q_{66}=
								-|h(u_1-u_\onebar)|^2\sum_{s\neq 1}\frac{u^{s\sbar}b_s^2}{\mu_s}.
							\end{align}
							By choosing
							\begin{align}
								H_{ps}=W_{p\sbar} b_s-\mathscr{B}^{( p)}_{ 1s}+h(u_1-u_\onebar)\delta_{sp}b_s,
								\label{H_second}
							\end{align} we find $Q_6-Q_{611}$ can be largely simplifed as
							\begin{align}
								Q_6-Q_{611}=-\sum_{s\neq 1, p, q} \frac{u^{p\qbar}}{\mu_s}H_{ps}\overline{H_{qs}}.
							\end{align}
						Comparing (\ref{H_firstAppear}) and (\ref{H_second}), we find $H_{ps}$ is exactly $\partial_p{\MM_{s\onebar}}$.
							

						({\bf Step 6. Orange Arrows in Figure \ref{fig:Process_Q_decompose}})
						
						\begin{align}
							Q_{11}+Q_5+Q_{611}&=|\trace(W)|^2-\sum_p|W_{p\onebar}|^2-\sum_{s\neq 1, p}|W_{p\overline{s}}|^2\\
							&=|\trace(W)|^2-\sum_{s, p}|W_{p\overline{s}}|^2.
						\end{align}


Finally, we get
	\begin{align}
		Q&\sim\bigl|\mathrm{tr}\bigl( W\bigr)\bigr|^2 -\sum_{p,s}|W_{p\bar{s}}|^2- \sum_{s\neq 1,q,p}\frac{u^{\bar{q}p}}{\mu_s} H_{ps} \overline{H_{qs}},
	\end{align}
	which is in the dashed square in Figure \ref{fig:Process_Q_decompose}.
  In addition, we have
  \begin{align}
  	u^{p\qbar	}\partial_{\pqbar}\sigma\sim\bigl|\mathrm{tr}\bigl( W\bigr)\bigr|^2 -\sum_{p,s}|W_{p\bar{s}}|^2- 2\sum_{s\neq 1,q,p}\frac{u^{\bar{q}p}}{\mu_s} H_{ps} \overline{H_{qs}}.
  	\label{final_fullQ}
  \end{align}

	\subsection{Proving Non-Positivity in the Case of Complex Dimension 2}
	\label{sec:2d}
	So we want to prove that for any anti-Hermitian matrix $(W_{p\bar{s}})$,
	\begin{align}
		\sum_{p,s} W_{p\bar{s}}\overline{W_{p\bar{s}}} -\bigl|\mathrm{tr}\bigl( W_{p\bar{s}}\bigr)\bigr|^2 \ge 0,\label{8.1}
	\end{align}
	with the restriction  
	\begin{align}
		\sum_{p,q} W_{p\bar{q}}u^{\bar{q}p}=\sum_{p,q} W_{p\bar{q}}(\delta^{\bar{q}p} +r_p r_{\bar{q}})=0.\label{8.2}
	\end{align}
	It turns out that this is valid only in complex dimension two. When the complex dimension is greater than two, the quadratic form (modulo \eqref{8.2}) may not be positive definite.
	In the following, we prove this in the case of complex dimension two. Denote 
	\begin{equation}
		(W_{p\bar{s}})=
		\begin{pmatrix}
			\sqrt{-1}a & \sqrt{-1}b+d \\
			\sqrt{-1}b-d & \sqrt{-1}c
		\end{pmatrix},
	\end{equation}
	where $a,b,c,d$ are real numbers.
	Let $r_1= w_1+\sqrt{-1}v_1$ and $r_2= w_2+\sqrt{-1}v_2$.
	Then 
	\begin{align*}
		&(u^{\bar{q}p})=
		\begin{pmatrix}
			1+r_1\overline{r_{1}} & r_1\overline{r_{2}} \\
			\overline{r_{2}}r_1 & 1+r_2\overline{r_{2}}
		\end{pmatrix}\\
		&=
		\begin{pmatrix}
			1+w_1^2+v_1^2 & w_1w_2+v_1v_2+\sqrt{-1}(v_1w_2-w_1v_2)\\
			w_1w_2+v_1v_2-\sqrt{-1}(v_1w_2-w_1v_2) & 1+w_2^2+ v_2^2
		\end{pmatrix}.
	\end{align*}
	Relation \eqref{8.1} is equivalent to 
	\begin{equation}
		a^2 +c^2 +2b^2+2d^2 \ge (a+c)^2.
	\end{equation}
	Expanding the right-hand side and cancelling $a^2$ and $c^2$, we obtain
	\begin{equation}
		b^2+d^2\ge ac.\label{9.2}
	\end{equation}
	Relation \eqref{8.2} is equivalent to 
	\begin{equation}
		a(1+w_1^2+v_1^2)+c(1+w_2^2+ v_2^2)+2b(w_1w_2+v_1v_2)-2d(v_1w_2-w_1v_2)=0.
	\end{equation}
	Using this relation in \eqref{9.2}, we replace $a$ by a combination of $b,c,d$, and transform \eqref{9.2} into 
	\begin{equation*}
		(b^2+d^2)(1+w_1^2+v_1^2)\ge -c^2(1+w_2^2+ v_2^2)-2bc(w_1w_2+v_1v_2)+2dc(v_1w_2-w_1v_2).
	\end{equation*}
	It is equivalent to the positive semi-definiteness of the following matrix
	\begin{equation}
		\begin{pmatrix}\label{9.3}
			1+w_2^2+ v_2^2& w_1w_2+v_1v_2 & w_1v_2-v_1w_2 \\
			w_1w_2+v_1v_2 & 1+w_1^2+ v_1^2 & 0\\
			w_1v_2-v_1w_2 &0 &  1+w_1^2+v_1^2
		\end{pmatrix}.
	\end{equation}
	According to Lemma \ref{A2}, it suffices to show that the following quantity is nonnegative:
	\begin{equation}
		(1+w_1^2+v_1^2)(1+w_2^2+ v_2^2)- (w_1w_2+v_1v_2)^2- (v_1w_2-w_1v_2 )^2\ge 0.
	\end{equation}
	With basic computation, the above inequality equals to 
	\begin{equation}
		1+w_1^2+v_1^2 +w_2^2+ v_2^2 >0.
	\end{equation}
	So the matrix in \eqref{9.3} is positive definite and we have proved that \eqref{8.1} is nonnegative. 
	\begin{remark}
		The condition \eqref{8.1} may not be valid in general dimension. For example, in the case of dimension n, let 
		\begin{equation}
			W_{p\bar{s}}=
			\begin{cases}
				\sqrt{-1}\delta_{ps},& p>1, \\
				-(n-1)\frac{\sqrt{-1}}{C^2+1}\delta_{ps}, &p=1.
			\end{cases}
		\end{equation}
		Let $r_1=C$ , $r_k=0$ for $k>1$, where $C$ is a real constant. Then $(W_{p\bar{s}})$ is skew-Hermitian and the condition \eqref{8.2} is satisfied. Under this setting, we obtain 
		\begin{equation}
			\mathrm{tr} (W_{p\bar{s}})=(n-1)\sqrt{-1}(1-\frac{1}{C^2+1}). 
		\end{equation}
		So the condition \eqref{8.1} becomes 
		\begin{equation}\label{11.1}
			(n-1)+\frac{(n-1)^2}{(C^2+1)^2} -\left[(n-1)\left( 1-\frac{1}{C^2+1}\right) \right]^2\ge 0.
		\end{equation}
		This is equivalent to 
		\begin{equation}\label{11.2}
			n-1-(n-1)^2+ \frac{2(n-1)^2}{C^2+1}   = n-1+(n-1)^2\left(\frac{2}{C^2+1}-1\right) \ge 0.
		\end{equation}
		It is only valid for $n=1$ and $n=2$. When $n>1$, the inequality \eqref{11.2} is equivalent to 
		\begin{equation}
			1 \ge (n-1)\left(1-\frac{2}{C^2+1}\right) .
		\end{equation}
		For any fixed $n>2$, we can choose $C$ large enough, such that 
		\begin{equation}
			1 < (n-1)\left(1-\frac{2}{C^2+1}\right).
		\end{equation}
		
	\end{remark}

	\begin{remark}
		After showing that 
		$|\trace(W)|^2-\sum_{p,s} W_{p\bar{s}}\overline{W_{p\bar{s}}} $ may not be non-positive in general dimesion, the readers may still wander if this is true for $u^\pqbar\sigma_{p\bar{q}}$. Since
		\begin{align}
			u^\pqbar\sigma_{p\bar{q}}\sim|\trace(W)|^2-\sum_{p,s} W_{p\bar{s}}\overline{W_{p\bar{s}}} 
			-2\sum_{s\neq 1,q,p}\frac{u^{\bar{q}p}}{\mu_s} H_{ps} \overline{H_{qs}},
		\end{align} it's reasonable to ask if the subtraction of $-2\sum_{s\neq 1,q,p}\frac{u^{\bar{q}p}}{\mu_s} H_{ps} \overline{H_{qs}}$ makes a difference. The answer is negative. In the following situation the contribution of $-2\sum_{s\neq 1,q,p}\frac{u^{\bar{q}p}}{\mu_s} H_{ps} \overline{H_{qs}}$ is zero. 
		
	Let $b_s=0$ for $s\neq 1$, which is a possible situation.
	In this situation 
	\begin{align}
			u^\pqbar\sigma_{p\bar{q}}\sim|\trace(W)|^2-\sum_{p,s} W_{p\bar{s}}\overline{W_{p\bar{s}}} 
		-2\sum_{s\neq 1,q,p}\frac{u^{\bar{q}p}}{\mu_s} \mathscr{B}^{( p)}_{1s}
		 \overline{ \mathscr{B}^{( q)}_{1s} }.
	\end{align}
	$W$ is the same as that of  the previous remark.
	We notice that $W$ is diagonal.  The only relation between $\mathscr{B}^{( p)}_{ }$ and $W$ is (\ref{relation_WB}), which put a restriction on $W_{p\onebar}$ and $\mathscr{B}^{( p)}_{11 }$. Also we notice that $\mathscr{B}^{(p)}_{1s }=\mathscr{B}^{(s )}_{1 p}$, so 
	\begin{align}
	-2\sum_{s\neq 1,q,p}\frac{u^{\bar{q}p}}{\mu_s} \mathscr{B}^{( p)}_{1s}
	\overline{ \mathscr{B}^{( q)}_{1s} }=	-2\sum_{s\neq 1,q,p}\frac{u^{\bar{q}p}}{\mu_s} \mathscr{B}^{( s)}_{1p}
		\overline{ \mathscr{B}^{( s)}_{1q} },
	\end{align}
	and terms related to $W$ are
	\begin{align}
			-2\sum_{s\neq 1}\frac{u^{\bar{1}1}}{\mu_s} \mathscr{B}^{( s)}_{11}
		\overline{ \mathscr{B}^{( s)}_{11} }=-2\sum_{s\neq 1}|W_{1s}|^2.
	\end{align}
	By our choice, all the off-diagonal elements of $W$ are $0$, so the contribution above is $0$.
	\end{remark}
	So, we get at the place where the  minimum eigenvalue of $\MK<$ the second minimum eigenvalue of $\MK$
	\begin{align}
	u^{\ijbar}\partial_{\ijbar}\sigma\leq C_0(\sigma+|\nabla \sigma|).
	\label{sigma_super_harmonic}
	\end{align}
	
	This result implies the following proposition, which is crucial in the lower bound estimate for $\sigma$:
	\begin{proposition}
		\label{prop:touch}
	At the place where the  minimum eigenvalue of $\MK<$ the second minimum eigenvalue of $\MK$, $\sigma$ cannot achieve a $0$ local  minimum, unless $\sigma\equiv 0$.
	\end{proposition}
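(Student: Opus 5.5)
The plan is to read the proposition as a strong minimum principle for the differential inequality \eqref{sigma_super_harmonic}, localized to the set where $\sigma$ is smooth.

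Let $U\subset\Omega$ be the open set where the minimal eigenvalue of $\MK$ is strictly smaller than the second minimal one. Openness follows from continuity of the eigenvalues of $\MK=\MM\MA^{-1}$. These eigenvalues are real, since $\MK$ is similar to the Hermitian matrix $\MA^{-1/2}\MM\MA^{-1/2}$. On $U$ the eigenvalue $\sigma$ is simple, hence smooth (here we use $u\in C^4$, so $\MM,\MA\in C^2$ and $\sigma\in C^2$). Suppose $\sigma$ attains a local minimum value $0$ at ${\bf z}_0\in U$, and let $U_0$ be the connected component of $U$ containing ${\bf z}_0$. Near ${\bf z}_0$ we have $\sigma\ge0$, so $\mu_1\ge0$ there. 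This is exactly the standing assumption of Subsection \ref{subsec:compute_SecondDerivative}, under which \eqref{sigma_super_harmonic} holds:
\begin{align*}
	u^{\bar q p}\sigma_{p\bar q}-C_0|\nabla\sigma|-C_0\sigma\le 0
\end{align*}
in a neighborhood of ${\bf z}_0$. The operator $L=u^{\bar q p}\partial_{p\bar q}$ is uniformly elliptic on compact subsets, because $(u^{\bar q p})$ is positive definite and bounded by the $C^2$ estimate together with the lower bound on $(u_{\ijbar})$.

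Next we write $L\sigma + b^k\partial_k\sigma + c\,\sigma \le 0$ with bounded coefficients. Wherever $\nabla\sigma\neq0$, take $b=-C_0\nabla\sigma/|\nabla\sigma|$ (real form) and $b=0$ otherwise; take $c=-C_0\le0$. Since $\sigma\ge0$ near ${\bf z}_0$ with an interior minimum $0$, the strong maximum principle for supersolutions (Hopf's version, which allows merely bounded measurable first-order coefficients and $c\le0$, applied to $\sigma\ge0$) shows that $\sigma\equiv0$ on a ball around ${\bf z}_0$. The set $\{\sigma=0\}\cap U_0$ is closed in $U_0$. Repeating the argument at each of its points (each such point is again a local minimum with value $0$, and $\sigma\ge0$ nearby) shows it is also open, so $\sigma\equiv0$ on $U_0$. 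This is the meaning of ``unless $\sigma\equiv0$''.

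The main point to check is that \eqref{sigma_super_harmonic} holds with a uniform constant on a full neighborhood, not just at ${\bf z}_0$. The $\sim$ computations used $\mu_1\ge0$, the normalization $\MA=I$ at each point, and bounds such as $1/(\mu_s-\mu_1)\sim1/\mu_s$. I would verify that $C_0$ depends only on $\|u\|_{C^3}$, on lower bounds for $\MA$ and $-u$ on a compact neighborhood, and on a positive lower bound for the gap $\mu_2-\mu_1$ there. The gap is bounded below near ${\bf z}_0$ by continuity, which makes $C_0$ locally uniform. If the nonnegativity of $\sigma$ near ${\bf z}_0$ were not available, I would instead apply Hopf's lemma to $\sigma$ on a small ball, using that the inequality only needs $\mu_1\ge0$ at points where $\sigma\ge0$.
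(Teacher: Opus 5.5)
Your local argument is correct and is essentially the paper's proof. The paper says only that where the minimal eigenvalue is simple, $\sigma$ is smooth and satisfies \eqref{sigma_super_harmonic}, and then concludes by the Harnack inequality. Your use of Hopf's strong maximum principle with the bounded drift $b=-C_0\nabla\sigma/|\nabla\sigma|$ and $c=-C_0$ gives the same conclusion, namely $\sigma\equiv0$ on a ball around ${\bf z}_0$. Your check that $C_0$ is locally uniform, via the gap $\mu_2-\mu_1$ and lower bounds on $\MA$ and $-u$, makes explicit something the paper leaves implicit.

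The step that fails as written is the extension to the whole component $U_0$. You claim every point of $\{\sigma=0\}\cap U_0$ is a local minimum with $\sigma\ge0$ nearby, and this is not justified. At a boundary point of the interior of the zero set, $\sigma$ may take negative values on the other side. There \eqref{sigma_super_harmonic} was never derived, since the computation assumed $\mu_1\ge0$. In any case, the maximum principle gives no unique continuation for such inequalities. For example, in one variable the function $\sigma=0$ for $x\le0$, $\sigma=-x^3$ for $0<x<1$ is $C^2$ and satisfies $\sigma''\le C(\sigma+|\sigma'|)$, yet it vanishes on an open set without vanishing identically.

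You have two ways to fix this:
\begin{itemize}
\item State the conclusion locally. This is exactly what the Harnack argument gives and what the paper uses later.
\item Add the hypothesis $\sigma\ge0$ on $U_0$, which is the situation in the application in Proposition \ref{prop:perturbation_stable}. Under it every zero of $\sigma$ in $U_0$ is a minimum point, so your open--closed argument goes through.
\end{itemize}
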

	\begin{proof}
		When the  minimum eigenvalue of $\MK<$ the second minimum eigenvalue of $\MK$, $\sigma$ is a smooth function and it satisfies (\ref{sigma_super_harmonic}). The result follows from Harnak inequality.
	\end{proof}

	\subsection{The General Case via Approximation}
	\label{sec:generalCase_comparison}

	Proposition \ref{prop:touch} in the previous section cannot directly provide a lower bound estimate for $\sigma$. As a function defined in $\Omega$, $\sigma$ is in general only $C^0$ and at the place where $\sigma$ achieves a local minimum,  the  minimum eigenvalue of $\MK$ may equal to the second minimum eigenvalue of $\MK$. In this section, we perturb $\sigma$ by quadratic polynomials and get $\sigma^\epsilon$. We will show that when  $\sigma^\epsilon$ achieves a local minimum at a point $z_0$, we do have that  the  minimum eigenvalue of $\MK^q(z_0)<$ the second minimum eigenvalue of $\MK^q(z_0)$, where $\MKq$ is computed using $u+q$ (parallel to that $\MK$ is computed using $u$) where $q$ is a pluriharmonic quadratic polynomial with $L^2$ norm restriction.

   Since $\Omega$ is a strictly convex domain with smooth boundary and $u$ is $C^2$ continuous with nonvanishing gradient near $\partial\Omega$, $u$ has strictly convex level sets near  $\partial\Omega$. So for a small enough $\varepsilon$, with
   \begin{align}
   	\Ov=\left\{
   	z\in\Omega| \ \text{dist}(z, \partial\Omega)>\varepsilon
   	\right\}
   \end{align} $-\sqrt{-u}$ is strictly convex on $\partial\Ov$. 
   In the following, we show that  $-\sqrt{-u}$ is stricly convex in $\Ov$, and we need to first derive an apriori estimate. 
   
      Let \begin{align}
   	\sQe=\left\{
   	q \left|\ \text{quadratic polynomial, } q_{\ijbar}=0,\  \int_{B_1(\ba)} q^2\leq\right. \epsilon
   	\right\}.
   \end{align}
   In above, $B_1(\ba)$ is a ball of radius $1$ with respect to the standard metric on $\EC^n$, with center $\ba\in \Omega$; the integration also uses the standard measure on $\EC^n$.
   \begin{proposition}
   	\label{prop:perturbation_stable}
   	Suppose that $u<0$ and $-\sqrt{-u}$ is strictly convex in $\overline{\Ov}$. $\epsilon$ is small enough so that for any $q\in\sQe$, 
   	\begin{align}
   &	\ \ \ \ \ \ 	u+q<0, &\text{ \ \ \ \ \ in  \   } \overline{\Ov}, \label{ass:negative}\\
   	&		u+q \text{ is strictly pluri-subharmonic}, &\text{ \ \ \ \ \ in    \  } \overline{\Ov},\label{ass:plurisubharmonic}
   	\end{align} and 
   	\begin{align}
   		-\sqrt{-u-q} \text{ is strictly convex on } \partial{\Ov}.\label{ass:strongly_convex_boundary}
   	\end{align}
   	Then for any $q\in\sQe$
   		\begin{align}
   		-\sqrt{-u-q} \text{ is strictly convex in }  {\Ov}.\label{ass:strongly_convex_inside}
   	\end{align}
   	In addition, we can find a constant $C$ depending on $\epsilon$, $\min_{\overline{\Ov}} u$,  $\max_{\overline\Ov}u$ and the diameter of $\Omega$, so that
   	\begin{align}
   		 \text{the real Hessian of }-\sqrt{-u}>CI_{2n}.
   	\end{align} 
   \end{proposition}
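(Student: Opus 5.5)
The plan is a continuity and compactness argument over the family $\sQe$, combined with a perturbation that forces a strict minimum eigenvalue at the touching point so that Proposition \ref{prop:touch} applies.

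\textbf{Setup.} Since $q_{\ijbar}=0$, the function $u+q$ still solves $\det((u+q)_{\ijbar})=1$. It is strictly plurisubharmonic and negative in $\overline{\Ov}$ by \eqref{ass:negative}, \eqref{ass:plurisubharmonic}. Hence all computations of Sections \ref{sec:equation_for _secondDerivatives}--\ref{sec:2d} apply verbatim to $u+q$ in place of $u$. This yields matrices $\MAq,\MBq,\MMq$, $\MKq=\MMq(\MAq)^{-1}$, and the minimal eigenvalue $\sigma^q$ of $\MKq$. By Lemma \ref{lem:determinant_lemma}, strict convexity of $-\sqrt{-u-q}$ at a point is equivalent to $\MAq>0$ and $\MMq>0$. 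Since $\MAq>0$ automatically, this is equivalent to $\sigma^q>0$.

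\textbf{Step 1 (continuity).} $\sQe$ is a compact, convex, hence connected, finite-dimensional set containing $q=0$. I run continuity in $q$ along segments $tq$, $t\in[0,1]$. Let $S$ be the set of $t$ for which $\sigma^{tq}>0$ on $\overline{\Ov}$. Then:
\begin{itemize}
\item $S$ contains $0$, by hypothesis.
\item $S$ is open, by continuity of $\sigma^q$ in $(z,q)$ on the compact set $\overline{\Ov}$.
\end{itemize}
For closedness, take $t_k\to t_*$ with $t_k\in S$. Then $\sigma^{t_*q}\ge0$ on $\overline{\Ov}$ and $>0$ on $\partial\Ov$ by \eqref{ass:strongly_convex_boundary}. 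If it vanished somewhere, it would attain an interior zero minimum at some $z_0$.

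\textbf{Step 2 (touching point; main obstacle).} At $z_0$ the minimal eigenvalue of $\MK^{t_*q}$ may coincide with the second one, so Proposition \ref{prop:touch} cannot be invoked directly. To fix this, I perturb once more within $\sQe$, which is possible because $\epsilon$-room remains if I run the argument on $\mathscr{Q}^{\epsilon'}$ with $\epsilon'<\epsilon$.

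First I choose a nearby pluriharmonic quadratic $\tilde q$, adding small terms like $\Ree(\sum c_{ij}z_iz_j)$. At $z_0$ this changes $\MB$ by an arbitrary small complex symmetric matrix and leaves $\MA$ unchanged. In the normalized frame ($\MA=I$, $\MB=\diag(b_i)$), a small symmetric perturbation can increase $b_1$ alone. This splits the top singular value of $\MB$ and hence makes the smallest eigenvalue of $\MK$ simple, while lowering $\sigma$ at $z_0$.

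Next I compare $\sigma^{t_*q+s\tilde q}$ with its value at $z_0$. I minimize over the one-parameter family and use a first-touching argument: increase $s$ from $0$ until $\min\sigma$ first reaches $0$. At that first touching point the eigenvalue is simple, thanks to the separation created by $\tilde q$. Proposition \ref{prop:touch}, via \eqref{sigma_super_harmonic} and the Harnack/strong maximum principle, then forces $\sigma\equiv0$ on a component. This contradicts $\sigma>0$ on $\partial\Ov$. Making this rigorous is the delicate point: the perturbation must keep the simple-eigenvalue property uniformly at the touching point, and the constant $C_0$ in \eqref{sigma_super_harmonic} must stay bounded along the family. I expect $C_0$ to depend only on $C^2$ bounds of $u+q$, $\min\MA$, and $\min(-u-q)$, all of which are uniform over $\sQe$ by compactness.

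\textbf{Step 3 (quantitative bound).} With \eqref{ass:strongly_convex_inside} established for all $q\in\sQe$, the lower bound follows by contradiction and compactness. Suppose the real Hessian of $-\sqrt{-u}$ had a direction $\xi$ at $z_1$ with eigenvalue below $C$. Choose $q\in\sQe$ that is real-quadratic in $\xi$, namely $q=-\eta\,\Ree(\langle z-z_1,\xi\rangle^2)$, a pluriharmonic function with $\eta$ fixed by the constraint $\int_{B_1(\ba)}q^2\le\epsilon$. Subtracting $q$ lowers the Hessian of $-\sqrt{-u-q}$ in direction $\xi$ by an amount comparable to $\eta/\sqrt{-u}$. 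This quantity is bounded below in terms of $\epsilon$, $\max u$, $\min u$ and the diameter of $\Omega$. If $C$ were smaller than this amount, $-\sqrt{-u-q}$ would fail to be convex at $z_1$, contradicting Step 2. This gives the explicit constant.
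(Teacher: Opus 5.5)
Your Step 2 has a genuine gap, and it is exactly where the paper's argument does its real work. Once $\sigma^{t_*q}$ has a zero interior minimum at $z_0$ with $\lamb=\samb$ there, your perturbation $t_*q+s\tilde q$ pushes $\sigma$ below $0$ at $z_0$. So ``increase $s$ until $\min\sigma$ first reaches $0$'' is not well posed, since the minimum is already $0$ at $s=0$. Suppose you instead rerun the first-touching argument in $t$ for the family $tq+s\tilde q$. Then the new touching point $z_s$ need not be $z_0$. The splitting produced by $\tilde q$ has size $O(s)$ and is arranged only at $z_0$, and nothing controls $\samb-\lamb$ at $z_s$. So no single one-parameter family guarantees a simple minimal eigenvalue at the touching point, and Proposition \ref{prop:touch} cannot be invoked. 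Two smaller points: $\Ree(\sum c_{ij}z_iz_j)$ must be centered at $z_0$, or it also changes $\MA$ there. And shrinking to $\mathscr{Q}^{\epsilon'}$ to gain room loses the $q$ with $\int q^2=\epsilon$.

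The missing idea is to minimize over the whole family at each point, $\sigma^\delta(z)=\min_{q\in\sQ^\delta}\lamb(\MKq(z))$. By Lemma \ref{lemma:Seperation}, a pointwise minimizer $r$ automatically satisfies $\lamb(\MK^r)<\samb(\MK^r)$. Indeed, if $b_1=b_2>0$ in the normalized frame, then $q+th$ with $(h_{ij})=\diag(1,-1,0,\dots,0)$ gives $\lamb=1-b_1^2-2b_1|t|-t^2$, which is lower for both signs of $t$. One of these choices stays in $\sQ^\delta$: the right sign of $t$, or $(1-\gamma t^2)q+th$ if $\int qh=0$, or $tg$ in the remaining cases. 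This contradicts minimality. Hence the smooth function $\lamb(\MK^r)$ touches $\sigma^\delta$ from above. Now run the continuity in the radius $\delta$, starting from $\sigma^0=\lamb(\MK^0)>0$, rather than along a segment. The first $\delta$ at which $\sigma^\delta$ has a zero interior minimum produces a zero interior minimum of $\lamb(\MK^r)$ with simple eigenvalue, contradicting Proposition \ref{prop:touch}. No uniformity of $C_0$ in $r$ is needed, since only this qualitative statement is used for one fixed $r$.

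Your Step 3, on the other hand, is correct and more elementary than the paper's route through Lemma \ref{lemma:robustness} and Lemma A.8 of \cite{hu25}. Since $q(z_1)=0$ and $\nabla q(z_1)=0$, the real Hessian of $-\sqrt{-u-q}$ at $z_1$ equals that of $-\sqrt{-u}$ plus $D^2q/(2\sqrt{-u(z_1)})$. A centered pluriharmonic $q$ with $D^2q(\xi,\xi)=-2\eta$ and $\eta\gtrsim\sqrt{\epsilon}/(1+\mathrm{diam}\,\Omega)^2$ then gives the bound directly, once convexity is known for all $q\in\sQe$.
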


   \begin{proof}
   	[Proof of the Proposition \ref{prop:perturbation_stable}]

   Given $q\in \sQe$, let
		\begin{align}
	\MAq=\left[(u+q)_\ijbar+\frac{(u+q)_i(u+q)_\jbar}{2(-u-q)}\right];\\
	\MBq=\left[(u+q)_{ij}+\frac{(u+q)_i(u+q)_j}{2(-u-q)}\right],
\end{align}
 and $ \MMq=\MAq-\MBq(\overline{\MAq})^{-1}\overline{\MBq},\ 
 \MKq=\MMq(\MAq)^{-1}.$ In above that $\MAq$ is invertible is guaranteed by the assumption (\ref{ass:plurisubharmonic}). Denote
 \begin{align}
 	\lamb(\MKq)(z_0)&= \text{ the minimum eigenvalue of } \MKq(z_0),\\
 	\samb(\MKq)(z_0)&= \text{ the second minimum eigenvalue of } \MKq(z_0),\\
 	\sigma^\epsilon(z_0)&=\min_{q\in\sQe}\lamb(\MKq(z_0)).
 \end{align}
Since $\sQ^\deltaa\subset\sQ^\deltab$ when $\deltaa\leq \deltab$, $\sigma^\deltaa\geq \sigma^\deltab$. So $\sigma^s$ depends on $s$ monotonically. Since $\lamb(\MKq)$ depends on $q$ algebraically and $\MA^q$ is invertible for $q\in\sQe$, we have that at any fixed point $z_0$, $\sigma^s(z_0)$ is a continuous function of $s$ for $s\leq \epsilon$.
\begin{figure}[h]
	\centering
	\includegraphics[height=4.5cm]{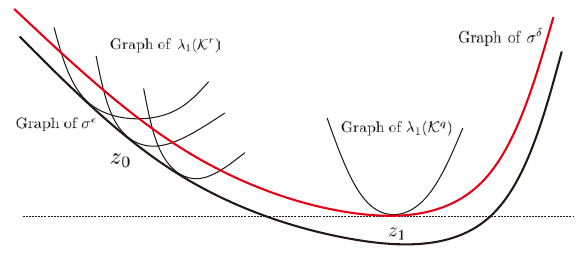}
	\caption{ Graph of  $\sigma^\epsilon$ is the Lower Envelop of  the Graph of  $\lambda_1(\MKq)$     with $q\in \sQe$}
	\label{fig:Envelop}
\end{figure}

At any point $z_0\in\overline{\Ov}$, since $\sQe$ is closed and of finite dimension, we can find $r\in \sQe$ so that
\begin{align}
	\lambda_1(\MK^r)(z_0)=\sigma^\epsilon(z_0),
\end{align}
and for $z\neq z_0$
\begin{align}
	\lambda_1(\MK^r)(z)\geq \sigma^\epsilon(z).
\end{align}
This means the graph of $\lambda_1(\MK^r)$ contacts with the graph of $ \sigma^\epsilon$ from above. 
In addition, Lemma \ref{lemma:Seperation} implies 
\begin{align}
		\lambda_1(\MK^r)(z_0)<	\lambda_2(\MK^r)(z_0).
\end{align} So  $\lambda_1(\MK^r)$ is a smooth function around $z_0$. Also since $u+r$ is still a solution to the Monge-Amp\`ere equation, $\det\left((u+r)_{\pqbar}\right)=1$, $\lamb(\MK^r)$ also satisfy (\ref{sigma_super_harmonic})  with $C_0$ depending on $u+r$. 
\begin{remark}
	\label{viscosity}
	The graph of $\sigmae$ is the lower envelop of a family of locally defined smooth functions ($\lamb (\MK^r)$ defined around $z_0$) satisfying (\ref{sigma_super_harmonic}) with $C_0$ depending on $u+r$. 
But we cannot say that $\sigma^\epsilon$ satisfies \begin{align}
	u^{\ijbar}\partial_{\ijbar}\sigmae\leq C_1(\sigmae+|\nabla \sigmae|)
\end{align}in the sense of viscosity
because it's not obvious why we can find a uniform $C_1$ depending on $u$ and $\epsilon$.

\end{remark}

  The assumption (\ref{ass:strongly_convex_boundary}) implies that $\sigmae>0$ on $\partial\Ov$. If $\sigmae$ is not positive in $\Ov$, then for a $\delta\leq \epsilon$, $\sigma^\delta$ achieves a $0$ local minimum  at a point $z_1\in\Ov$. Then for a $q\in \sQ^\delta$, $\lambda_1(\MKq)$ achieves a $0$ local minimum at $z_1$. 
    Since $\lambda_1(\MKq)$ cannot achieve  $0$ interior local minimum according to Proposition \ref{prop:touch}, we get a contradiction.
So we proved that $\sigmae>0$ in $\Ov$. It follows that for any $q\in \sQe$, $\lamb(\MKq)>0$ and $-\sqrt{-u-q}$ is strictly convex. Using Lemma \ref{lemma:robustness}, we have a convexity estimate for $-\sqrt{-u}$. This proves the proposition.
   \end{proof}
   
  For a smooth and strictly convex domain $\Omega$, we connect it with a ball. Let 
  \begin{align}
  	\Omega_t=t\Omega+(1-t)B,
  \end{align}where $B$ is a ball of radius 1. Let $u_t$ be the solution to the following Dirichlet problem:
  \begin{align}
  	u_t=0,\ \ \ \ \ \text{on }\partial\Omega_t,
  \end{align}
  and 
  \begin{align}
  	det(\partial_{\ijbar}u_t)=1,\text{\ \ \ \ \ \ in }\Omega_t.
  \end{align} 
  In particular, we have $u_0=|z|^2-1$ and $u_1=u$.
  We can find a small $\varepsilon>0$, so that $-\sqrt{-u_t}$ is strictly convex on $\partial\Ov_t$ for all $t\in[0,1]$. 
  For each $t$, we can compute $\sigmae_t$ using $u_t$, parallel to the previous computation, replacing $u$ by $u_t$. Then  for a fixed $\epsilon$ small enough $\sigmae_t>0$ on $\partial\Ov_t$ and $u_t+q$ is strictly plurisubharmonic for $q\in\sQe$ and for all $t\in[0,1]$. 
 
  Let $m(t)=\text{min}_{\overline{\Ov_t}}\sigmae_t$. Then $m(t)$ is a continuous function of $t$, with $m(0)>0$. If $m(1)>0$, $-\sqrt{-u}$ is strictly convex and the result is proved. Otherwize, let $s_0=\min \left\{
    s| m(s)=0
   \right\}.$ For $s<s_0$, $\sigma_s^\epsilon>0$. So for $q\in\sQe$, $u_s+q$ is strictly convex. Lemma \ref{lemma:robustness} implies a convexity estimate for $-\sqrt{-u_s}$, with $s<s_0$, depending on $\epsilon$ and the diameter of $\Omega$. Taking limit, we know $-\sqrt{-u_{s_0}}$ is strictly convex. This allow us to apply Proposition \ref{prop:perturbation_stable} to $u_{s_0}$. So $\sigma_{s_0}^\epsilon>0$ in $\Ov_{s_0}$, which is a contradiction.
   
   Therefore $m(1)>0$, $-\sqrt{-u}$ is strictly convex and the convexity estimate follows from Lemma \ref{lemma:robustness}.

	\appendix
	\section{Algebra Lemmas}\label{app1}

	\begin{lemma}
		\label{lem:determinant_lemma}
		Suppose that $u$ is a quadratic polynomial on $\mathbb{C}^n$. Let $A=(u_{i\bar{j}})$ and $B=(u_{ij})$. Here $u_{i\bar{j}}$ and $u_{ij}$ are derivatives with respect to the complex coordinates $z^i$. Let $D$ be the real Hessian matrix of $u$:
		\begin{equation}
			\begin{pmatrix}
				D_{2i-1,2j-1} & D_{2i-1,2j} \\
				D_{2i,2j-1} & D_{2i,2j}
			\end{pmatrix}
			=
			\begin{pmatrix}
				\partial_{x^i}\partial_{x^j}u & \partial_{x^i}\partial_{y^j}u \\
				\partial_{y^i}\partial_{x^j}u & \partial_{y^i}\partial_{y^j}u
			\end{pmatrix},
		\end{equation}
		where $x^i+\sqrt{-1}y^i=z^i$. Then
		\begin{equation}\label{A1-1}
			\det\left(I-B\overline{A}^{-1}\overline{B}A^{-1}\right)=\frac{\det D}{4^n\left(\det A\right)^2}.
		\end{equation}
	\end{lemma}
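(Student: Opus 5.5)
The plan is to express the real Hessian $D$ in the complex frame $(\zeta,\bar\zeta)$, compute the determinant of the resulting $2n\times 2n$ complex block matrix by a Schur complement, and account for the Jacobian factors of the change of frame. Since $u$ is a quadratic polynomial, all second derivatives are constant, so this is pure linear algebra. Throughout I assume $A$ is invertible (implicit in \eqref{A1-1}). Recall that $A$ is Hermitian, so $A^T=\overline{A}$, and that $B$ is symmetric.

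\textbf{Step 1: the Hessian quadratic form in complex terms.} For a real direction $\xi=(\xi^1_x,\xi^1_y,\dots,\xi^n_x,\xi^n_y)\in\mathbb{R}^{2n}$, set $\zeta^i=\xi^i_x+\sqrt{-1}\,\xi^i_y$. Differentiating $u(z+t\zeta)$ twice in $t$ gives
\begin{align*}
\xi^T D\,\xi=B_{ij}\zeta^i\zeta^j+\overline{B_{ij}}\,\overline{\zeta^i}\,\overline{\zeta^j}+2A_{i\bar j}\zeta^i\overline{\zeta^j}.
\end{align*}
Polarizing this identity gives the matrix relation
\begin{align*}
D=P^T H P,\qquad H=\frac12\begin{pmatrix} 2B & 2A\\ 2A^T & 2\overline{B}\end{pmatrix}=\begin{pmatrix} B & A\\ \overline{A} & \overline{B}\end{pmatrix},
\end{align*}
where $P$ maps $\xi\mapsto(\zeta,\bar\zeta)$. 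After reordering the coordinates, $P$ is block diagonal with $n$ copies of $\begin{pmatrix}1&\sqrt{-1}\\ 1&-\sqrt{-1}\end{pmatrix}$. The one place needing care is the constant: one must check that the symmetric matrix representing the form is exactly $H$, with the factor $2$ on the $A$ blocks split symmetrically between the two off-diagonal blocks.

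\textbf{Step 2: determinants.} Each $2\times2$ block of $P$ has determinant $-2\sqrt{-1}$. The row and column reorderings contribute the same permutation sign on both sides, so they cancel in $\det(P^T)\det(P)$. Hence
\begin{align*}
\det D=\det(P)^2\det H=(-4)^n\det H.
\end{align*}
Swapping the two block columns of $H$ multiplies the determinant by $(-1)^{n^2}=(-1)^n$, giving
\begin{align*}
\det D=4^n\det\begin{pmatrix} A & B\\ \overline{B} & \overline{A}\end{pmatrix}.
\end{align*}

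\textbf{Step 3: Schur complement.} Taking the Schur complement with respect to $A$,
\begin{align*}
\det\begin{pmatrix} A & B\\ \overline{B} & \overline{A}\end{pmatrix}=\det A\cdot\det\!\left(\overline{A}-\overline{B}A^{-1}B\right)=\det A\,\det\overline{A}\,\det\!\left(I-\overline{A}^{-1}\overline{B}A^{-1}B\right).
\end{align*}
Since $A$ is Hermitian, $\det A$ is real, so $\det A\det\overline{A}=(\det A)^2$. The remaining factor is real, because it equals $\det D/(4^n(\det A)^2)$ and $D$ is real; therefore it equals its own conjugate, $\det(I-A^{-1}B\overline{A}^{-1}\overline{B})$. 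Finally, conjugating by $A$ gives
\begin{align*}
A\left(A^{-1}B\overline{A}^{-1}\overline{B}\right)A^{-1}=B\overline{A}^{-1}\overline{B}A^{-1},
\end{align*}
so the determinant is unchanged and we obtain exactly \eqref{A1-1}.

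The computations are routine. The real risk lies in the bookkeeping: the normalization factor in Step 1, and the signs coming from the block permutation together with $(-2\sqrt{-1})^{2n}$. I would verify these first on the case $n=1$ with $u=|z|^2$, where $A=1$, $B=0$ and $D=2I_2$, so both sides of \eqref{A1-1} equal $1$.
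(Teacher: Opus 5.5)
Your proof is correct, but it takes a different route from the paper's.

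The paper first shows that both sides are invariant under a complex-linear change of coordinates. The left side is invariant because $B\overline{A}^{-1}\overline{B}A^{-1}$ changes by a similarity; the right side is invariant via $|\det K|^4=(\det R)^2$, where $R$ is the real realization of $K$. It then uses the Autonne--Takagi factorization to reach $A=I$, $B=\mathrm{diag}(b_1,\dots,b_n)$ with $b_k$ real. In that frame $D=\mathrm{diag}(2(1+b_1),2(1-b_1),\dots,2(1+b_n),2(1-b_n))$, and both sides equal $\prod_k(1-b_k^2)$.

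You never normalize. You write $D=P^THP$ in an arbitrary frame, track $\det(P)^2=(-4)^n$ and the sign $(-1)^n$ of the block-column swap, and finish with a Schur complement. Your constants and signs check out, including the $\frac12$-splitting of the $A$ blocks. This avoids Takagi and the $\det R$ versus $\det K$ fact. It also needs only that $A$ is Hermitian and invertible, whereas normalizing to $A=I$ tacitly requires $A>0$.

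What the paper's normal form buys is the full spectrum of $D$. That spectrum, not just the determinant identity, underlies the equivalence ``strictly convex $\iff A>0$ and $M>0$'' that the paper draws from this lemma.

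Your setup yields that equivalence too, with one more observation. For real $\xi$, $\overline{P}\xi=JP\xi$, where $J$ is the block swap, so $\overline{P}=JP$. Hence $D=P^THP=\overline{P}^{\,*}\begin{pmatrix}A&B\\ \overline{B}&\overline{A}\end{pmatrix}\overline{P}$ is a Hermitian congruence. The Schur complement of this block matrix with respect to $A$ is $\overline{A}-\overline{B}A^{-1}B=\overline{M}$, and $\overline{M}=M^T$ has the same spectrum as $M$. Haynsworth's inertia additivity then gives that the inertia of $D$ is the sum of the inertias of $A$ and $M$.
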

	\begin{proof}
		Firstly, we show that the identity above is invariant under a linear change of coordinates. Then we choose complex coordinates so that $A=I$ and $B=\mathrm{diag}(b_1,b_2,\cdots, b_n)$, with $b_k \in \mathbb{R}$, and we prove the identity (\ref{A1-1}) in this case.    
		
		Let  $\{\widetilde{z^i}=z^jK^i_j\}$ be another set of coordinates. Then
		$\widetilde{A}=(\partial_{\widetilde{z^i}}\overline{\partial_{\widetilde{z^j}}} u)$, $\widetilde{B}=(\partial_{\widetilde{z^i}}\partial_{\widetilde{z^j}} u)$, and $\widetilde{D}$ be the real Hessian matrix of $u$
		with coordinates $(\widetilde{x^i},\widetilde{y^i})$ with $\widetilde{x^i}+\sqrt{-1}\widetilde{y^i}=\widetilde{z^i}$.
		We have $A=K \widetilde{A}K^*$ and $B=K\widetilde{B}K^T$. Then 
		\begin{equation}
			B\overline{A}^{-1}\overline{B}A^{-1}=K \left( \widetilde{B} \overline{\widetilde{A}}^{-1}\overline{\widetilde{B}}\widetilde{A} \right)K^{-1}.
		\end{equation}
		When the coordinates are changed, the matrix $ B\overline{A}^{-1}\overline{B}A^{-1}$ transforms by a similarity transformation. 
		Consequently, the matrices $ B\overline{A}^{-1}\overline{B}A^{-1}$ and $\widetilde{B} \overline{\widetilde{A}}^{-1}\overline{\widetilde{B}}\widetilde{A}$ have the same eigenvalues.
		Therefore, the left-hand side of (\ref{A1-1}) is invariant under a linear change of coordinates.
		
		Set $K=P+\sqrt{-1}Q$. Under this notation, the matrices $D$ and $\widetilde{D}$ satisfy the following relation: 
		\begin{equation}
			D=R\widetilde{D}R^T,
		\end{equation}
		with 
		\begin{equation}
			\begin{pmatrix}
				R^{2i-1,2j-1} & R^{2i-1,2j} \\
				R^{2i,2j-1} & R^{2i,2j}
			\end{pmatrix}
			=
			\begin{pmatrix}
				P^i_j & -Q^i_j \\
				Q^i_j & P^i_j
			\end{pmatrix}.
		\end{equation}
		With the new coordinate $\widetilde{z^i}=\widetilde{x^i}+\sqrt{-1}\widetilde{y^i}$, the right-hand side of (\ref{A1-1}) becomes
		\begin{equation}
			\frac{\det \widetilde{D}}{4^n (\det \widetilde{A})^2} =\frac{ \det D |\det K|^4}{4^n (\det A)^2 (\det R)^2}.
		\end{equation}
		We need to show $|\det K|^4=(\det R)^2$. This is a basic linear algebraic fact and its proof can be found in 
		\cite{Horn-Johnson2013}, 1.3, p21 (d). Therefore, the right-hand side of (\ref{A1-1}) is invariant under a linear change of coordinate.  
		
		We first choose a coordinate such that $A=I$. Then we choose a unitary coordinate transformation so that 
		$B=\mathrm{diag}(b_1,b_2,\cdots, b_n)$, with $b_k \in \mathbb{R}$. This is possible according to Autonne-Takagi Factorization ( Lemma A.3 of \cite{hu25}) or Corollary 4.4.4(c) of \cite{Horn-Johnson2013}.
		
		In this case, we have 
		\begin{equation}
			I-B\overline{A}^{-1}\overline{B}A^{-1}=\mathrm{diag}(1-b_1^2, 1-b_2^2, \cdots, 1-b_n^2),
		\end{equation}
		and 
		\begin{equation}
			D=\mathrm{diag}\left(2(1+b_1),2(1-b_1),\cdots,2(1+b_n),2(1-b_n) \right).
		\end{equation}
		It follows that 
		\begin{equation}
			\det (  I-B\overline{A}^{-1}\overline{B}A^{-1}) =\Pi_{k=1}^n(1-b_k^2)=\frac{\det D}{4^n}=\frac{\det D}{4^n (\det A)^2}.
		\end{equation}
		This proves (\ref{A1-1}).
	\end{proof}
	
	\begin{lemma}\label{A2}
		Let constants $B,C>0$ and  $A,\beta, \gamma \in \mathbb{R}$. The matrix 
		\begin{equation}
			M=
			\begin{pmatrix}
				A & \beta &  \gamma \\
				\beta & B & 0 \\
				\gamma &0 & C
			\end{pmatrix}
		\end{equation}
		is positive semi-definite if and only if 
		\begin{equation}
			A -\frac{\beta^2}{B}-\frac{\gamma^2}{C}\ge 0.
		\end{equation}
		\end{lemma}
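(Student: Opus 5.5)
The plan is to use the Schur complement with respect to the block $\operatorname{diag}(B,C)$, which is positive definite because $B,C>0$. Write
\[
M=\begin{pmatrix} A & v^T\\ v & D\end{pmatrix},\qquad v=\begin{pmatrix}\beta\\ \gamma\end{pmatrix},\qquad D=\begin{pmatrix} B&0\\0&C\end{pmatrix}.
\]
Put $P=\begin{pmatrix}1&0\\ -D^{-1}v & I_2\end{pmatrix}$, which is invertible. A direct block multiplication gives the congruence
\[
P^TMP=\begin{pmatrix} A-v^TD^{-1}v & 0\\ 0& D\end{pmatrix},
\qquad v^TD^{-1}v=\frac{\beta^2}{B}+\frac{\gamma^2}{C}.
\]

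Since congruence by an invertible matrix preserves positive semi-definiteness (Sylvester's law of inertia), $M\ge 0$ if and only if the block diagonal matrix on the right is $\ge0$. As $D>0$, that holds exactly when the scalar $A-\frac{\beta^2}{B}-\frac{\gamma^2}{C}\ge 0$. This proves both directions at once.

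To avoid quoting inertia, the same argument can be done concretely with quadratic forms. For $x=(x_1,x_2,x_3)^T$, completing squares gives
\[
x^TMx=\Bigl(A-\tfrac{\beta^2}{B}-\tfrac{\gamma^2}{C}\Bigr)x_1^2+B\Bigl(x_2+\tfrac{\beta}{B}x_1\Bigr)^2+C\Bigl(x_3+\tfrac{\gamma}{C}x_1\Bigr)^2 .
\]
Sufficiency of the condition is immediate from this identity. For necessity, take $x_1=1$, $x_2=-\beta/B$ and $x_3=-\gamma/C$; the last two squares vanish, so $x^TMx$ equals the Schur complement, which must therefore be $\ge0$.

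There is no real obstacle here. The only step to check is the algebra of the square completion, which means verifying that the cross terms $2\beta x_1x_2$ and $2\gamma x_1x_3$ are reproduced exactly.
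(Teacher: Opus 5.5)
Your proposal is correct and is essentially the paper's proof. The paper uses exactly your congruence matrix $P$, with first column $(1,-\beta/B,-\gamma/C)^T$, to reduce $M$ to $\mathrm{diag}(A-\beta^2/B-\gamma^2/C,\,B,\,C)$ and then concludes from $B,C>0$; your completing-the-squares identity is the same congruence written as a quadratic form, and its algebra checks out.
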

		\begin{proof}
			Let the matrix 
			\begin{equation}
				P=
				\begin{pmatrix}
					1 & 0 & 0\\
					-\frac{\beta}{B} & 1 & 0 \\
					-\frac{\gamma}{C}  &0 & 1
				\end{pmatrix} ,
			\end{equation}
			which is invertible. It follows that
			\begin{equation}
				P^TMP=
				\begin{pmatrix}
					A -\frac{\beta^2}{B}-\frac{\gamma^2}{C} & 0 & 0 \\
					0 &B & 0\\
					0&0 & C
				\end{pmatrix}.
			\end{equation}
			Since congruence by an invertible matrix preserves positive semi-definiteness, we obtain
			\begin{equation}
				M \ge 0 ,\quad \text{ if and only if } \quad  P^TMP\ge 0.
			\end{equation}
			Moreover, since $B,C> 0$, the matrix $P^TMP\ge 0$ is equivalent to
			\begin{equation}
				A -\frac{\beta^2}{B}-\frac{\gamma^2}{C} \ge 0.
			\end{equation}
		\end{proof}
		
		The following lemma is used when doing the perturbation in section \ref{sec:generalCase_comparison}.  It says the perturbation which minimizes the minimal eigenvalue of $\MKq$ must make the minimal eigenvalue of $\MKq$ strictly smaller than then second minimal eigenvalue of $\MKq$.
		\begin{lemma}
			\label{lemma:Seperation}
			$u$ is a $C^2$, negative function with $-\sqrt{-u}$ being strictly convex, defined in a neighborhood of $z_0\in \EC^n$. $\sQe$ is a set of pluriharmonic quadratic polynomials on $\EC^n$ with $L^2$ norm restriction:
			\begin{align}
				\sQe=\left\{
				q \big| \text{real-valued quadratic polynomial, } q_{\ijbar}=0,\ \int_{\bK}q^2\leq \epsilon
				\right\}.
			\end{align}
			In above  $\bK$ is a bounded and closed set in $\EC^n$ with smooth boundary; the integration uses a measure that is equivalent to the standard metric on $\EC^n$. Let 
			\begin{align}
				\MAq=\left[(u+q)_\ijbar+\frac{(u+q)_i(u+q)_\jbar}{2(-u-q)}\right](z_0);\\
					\MBq=\left[(u+q)_{ij}+\frac{(u+q)_i(u+q)_j}{2(-u-q)}\right](z_0).
			\end{align}
			Then we combine them to get $\MMq$ and $\MKq$:
		   \begin{align}
		   	 \MMq&=\MAq-\MBq(\overline{\MAq})^{-1}\overline{\MBq};\\
		   	 \MKq&=\MMq(\MAq)^{-1}.
		   \end{align}
		   If $\epsilon$ is small enough so that for any $q\in \sQe$
		   \begin{align}
		   u+q<0, \ \ \ \text{\ \ at }z_0
		   	\label{assumption:small_epsilon_negative}
	   \end{align}
   and \begin{align}
   	-\sqrt{-u-q} \text{ is convex  and strictly pluri-subharmonic at $z_0$},
   	\label{assumption:small_epsilon_convex}
   \end{align}
	  then the $q\in \sQe$ which minimizes the minimal eigenvalue of $\MKq$ makes 
		   \begin{align}
		   	  \mineig  (\MKq) <\smineig(\MKq).
		   \end{align}
		\end{lemma}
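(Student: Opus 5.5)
The plan is to argue by contradiction via first-order eigenvalue perturbation, using the fact that a multiple eigenvalue splits linearly under a generic perturbation while the $L^2$ constraint only costs something quadratically. Let $q_*\in\sQe$ be a minimizer of $\lamb(\MK^{q})$. It exists because $\sQe$ is a compact subset of a finite-dimensional space and $q\mapsto \lamb(\MKq)$ is continuous there, since $\MAq$ stays invertible by (\ref{assumption:small_epsilon_convex}). Suppose, for contradiction, that $\lamb(\MK^{q_*})=\samb(\MK^{q_*})$.

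Since $\MKq=\MMq(\MAq)^{-1}$ is similar to the Hermitian matrix $(\MAq)^{-1/2}\MMq(\MAq)^{-1/2}$, its eigenvalues are real and Hermitian perturbation theory applies. We pass to coordinates at $z_0$ in which $\MA^{q_*}=I$ and $\MB^{q_*}=\diag(b_1,\dots,b_n)$ with $b_1=b_2\ge b_k$, as in Section \ref{subsec:compute_SecondDerivative}. Then $\MK^{q_*}=\diag(1-b_k^2)$, and the eigenvalue $1-b_1^2$ has an eigenspace $E$ of dimension at least $2$ containing $e_1,e_2$.

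\textbf{Step 1: splitting direction.} For $p$ real, pluriharmonic and quadratic, consider $q_t=q_*+tp$. Standard perturbation theory gives
\[
\lamb(\MK^{q_t})=\lamb(\MK^{q_*})+t\,\lambda_{\min}\big(P_E\,\dot{\MK}\,P_E\big)+O(t^2),
\]
where $\dot{\MK}$ is the derivative at $t=0$ and $P_E$ is the projection onto $E$. We therefore want a $p$ for which $P_E\dot{\MK}P_E$ is nonzero and traceless. Then its minimal eigenvalue is negative, and replacing $p$ by $-p$ if necessary, $\lamb$ decreases like $-c|t|$ with $c>0$.

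Adding $p=\Ree(\sum c_{ij}z_iz_j)$ to the function leaves $u_{\ijbar}$, $u_i$ and $u$ unchanged at $z_0$ and changes $u_{ij}$ by $c_{ij}$. Hence $\dot{\MA}=0$ and $\dot{\MB}=C$ for an arbitrary complex symmetric matrix $C$, which gives
\[
\dot{\MM}=-(C\,\overline{\MB}+\MB\,\overline{C}).
\]
On the $\{e_1,e_2\}$ block this is $-b_1(C_{ij}+\overline{C_{ij}})$. Taking $C_{12}=C_{21}=1$ and all other entries zero yields a nonzero traceless compression whenever $b_1>0$.

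If $b_1=0$, then $\MM=I=\MA$. In that case I would instead use the linear part of $p$, i.e. $p=\Ree(\xi\cdot z)$, which perturbs $u_i$ by $\xi_i$. This gives first-order changes
\[
\dot{\MA}=h(\xi\bar u^T+u\xi^*)+\dots,\qquad \dot{\MB}=h(\xi u^T+u\xi^T)+\dots,
\]
and, if needed, the constant term of $p$, which changes $h$. Among these one chooses a combination whose compression to $E$ is Hermitian, nonzero and traceless. Verifying that such a choice exists in this degenerate case is the step I expect to be the main obstacle. The fallback there is that the second-order term $-C\overline{C}$ is negative definite on the block.

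\textbf{Step 2: staying inside $\sQe$.} Let $p^\perp$ be the $L^2(\bK)$-orthogonal projection of $p$ off $q_*$. Using $p^\perp$ in place of $p$ changes the first-order term only by a multiple of the direction $q_*$. That multiple-of-$q_*$ contribution is handled by rescaling, as follows. Set
\[
q_t=(1-\kappa t^2)q_*+t\,p^\perp,
\]
with $\kappa$ chosen so that
\[
\int_{\bK}q_t^2=(1-\kappa t^2)^2\int_{\bK} q_*^2+t^2\int_{\bK}(p^\perp)^2\le\epsilon.
\]
This works because $\int_{\bK}q_*^2>0$ whenever the constraint is active; if the constraint is inactive, we simply take $q_t=q_*+tp$ for small $t$. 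The rescaling perturbs $\lamb$ only at order $t^2$.

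Combining this with the linear gain from Step 1 gives $\lamb(\MK^{q_t})<\lamb(\MK^{q_*})$ for small $|t|$ of the appropriate sign, with $q_t\in\sQe$. This contradicts the minimality of $q_*$, so $\lamb(\MK^{q_*})<\samb(\MK^{q_*})$.

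Before concluding, one must check that the first-order direction is not annihilated by the projection onto $p^\perp$. To handle this I would choose $p$ among the two-dimensional family $\{\Ree(C z\cdot z),\ \Ree(\sqrt{-1}\,C z\cdot z)\}$. Only a one-dimensional subspace of this family can be lost to the $q_*$-direction, and the remaining direction still produces a traceless nonzero compression.
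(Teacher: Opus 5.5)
Your $b_1>0$ argument (first-order splitting of the double eigenvalue, plus a $(1-\kappa t^2)q_*$ rescaling to respect $\int_{\bK}q^2\le\epsilon$) is close to the paper's Cases 2--4. The degenerate case $b_1=0$, however, is a genuine gap, and the primary route you propose for it cannot work. Since $\MK^{q}=I-\MB^{q}(\overline{\MA^{q}})^{-1}\overline{\MB^{q}}(\MA^{q})^{-1}$, every term of its first variation contains an undifferentiated factor $\MB^{q_*}$ or $\overline{\MB^{q_*}}$. When $b_1=0$ we have $\MB^{q_*}=0$, so $\dot{\MK}=0$ in \emph{every} direction, including those from linear and constant terms, and no first-order splitting exists. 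Your fallback via $-t^2C\overline{C}$ is not reconciled with Step 2, where you say the rescaling perturbs $\lamb$ at order $t^2$, the same order as the gain. It can be rescued: with $\MB^{q_*}=0$ the rescaling only enters $\MK$ at order $t^3$. But you would have to argue this, and also pick a direction orthogonal to $q_*$ that changes $\MB$. The paper uses a global comparison instead of a local perturbation. For every $q$, $\MK^{q}$ is similar to $I-XX^{*}$ with $X=(\MA^{q})^{-1/2}\MB^{q}(\overline{\MA^{q}})^{-1/2}$, so all its eigenvalues are $\le 1$. If $b_1=0$, then $\lamb(\MK^{q_*})=1$ is the largest possible value, and minimality forces $\MK^{q}=I$, i.e.\ $\MB^{q}=0$, for all $q\in\sQe$. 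Now take $g$ pluriharmonic, vanishing to second order at $z_0$, with $(g_{ij})\neq0$. Both $0$ and $tg$ lie in $\sQe$ for small $t$, yet $\MB^{tg}-\MB^{0}=t(g_{ij})\neq 0$, a contradiction. The constraint never enters.

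Your last paragraph also contains an error. With $\dot{\MA}=0$, the compression of $\dot{\MK}$ to the eigenspace $E$ is $-2b_1\Ree(\dot{\MB}|_{E})$, so the imaginary part of $\dot{\MB}$ is invisible at first order. For your real $C$, the direction $\Ree(\sqrt{-1}\,Cz\cdot z)$ therefore has zero compression. If the line lost to $q_*$ is spanned by $\Ree(Cz\cdot z)$ (e.g.\ $q_*$ is a multiple of it), the ``remaining direction'' gives nothing. The counting argument does work once the two family members have linearly independent compressions, e.g.\ $C_{12}=C_{21}=1$ and $C=\diag(1,-1,0,\dots,0)$. Any nonzero compression then suffices, since both signs of $t$ are admissible in $(1-\kappa t^2)q_*+tp^\perp$; tracelessness is not even needed. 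The paper avoids projecting altogether. It takes $h$ with $(h_{ij})=\diag(1,-1,0,\dots,0)$, vanishing to second order at $z_0$, so that $\MK^{q+th}=\diag(1-(b_1+t)^2,1-(b_1-t)^2,1-b_3^2,\dots)$ exactly and $\lamb$ drops for both signs of $t$. When $\int_{\bK}qh\neq0$ it chooses the sign that decreases $\int_{\bK}(q+th)^2$, and it uses $(1-\gamma t^2)q+th$ only when $\int_{\bK}qh=0$. A minor point: center your quadratics at $z_0$, i.e.\ use $z-z_0$, since otherwise they change $u$ and $u_i$ at $z_0$.
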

		\begin{proof}
			In the proof of this lemma, we denote the minimum eigenvalue and the second minimal eigenvalue of a matrix $H$ by $\lamb(H)$ and $\samb(H)$ respectively. Also, in the integration, we ignore the  region $\bK$ since we only do integration in this region.
			
			We prove this lemma by contradiction. Suppose that $q\in \sQe$ minimizes  $\lamb(\MKq)$, i.e.
			\begin{align}
				\lamb(\MKq)=\min_{r\in \sQe} \lamb(\MK^r),
			    \label{assumption:q_minimal}
			\end{align}
			 and 
			\begin{align}
				\lamb(\MKq) =\samb(\MKq).  \label{assumption:equal_min_second_min_eig}
			\end{align}
			Then we derive a contradiction. Note that for each fixed $\epsilon$, $\sQe$ is a closed set of finite dimension and $\lambda_1(\MKq)$ is a continuous function of $q$ so a $q$ satisfying (\ref{assumption:q_minimal}) exists.
			
			To simplify the computation, we change coordinate on $\EC^n$ linearly so that
			\begin{align}
				\MAq&=I_n;\\
				\MBq&=\diag(b_1, b_2,\ ...\ , b_n),
			\end{align}
			where $1\geq b_1\geq b_2\geq\ ...\ \geq b_n\geq 0$.
			This is possible because as we change coordinate linearly on $\EC^n$, matrices $\MAq$ and $\MBq$ change in the following way:
		\[\MAq\rightarrow P\MAq P^\ast, \ \ \ \MBq\rightarrow P\MBq P^T, \] where $P$ is the coordinate transition matrix. Then using Autonne-Takagi factorization, we can diagonalize $\MAq$ and $\MBq$ simultaneously, where $\MBq$ becomes diagonal with non-negative diagonal elements
		 (see Lemma A.3 in \cite{hu25} or Corollary 4.4.4(c) in \cite{Horn-Johnson2013}). 
	 Then 
	 \begin{align}
	 	\MKq=\diag(1-b_1^2,\ 1-b_2^2,\ ...\ , 1-b_n^2).
	 \end{align}	
	 Using Lemma \ref{lem:determinant_lemma}, we know that  the assumption (\ref{assumption:small_epsilon_convex}) implies $1-b_i^2\geq 0$. So all $b_i$'s are $\leq 1$.
	Because of the assumption (\ref{assumption:equal_min_second_min_eig}), we have $b_1=b_2$.
		
		Choose  quadratic polynomials $h, g$, so that
		\begin{align}
			h_{\ijbar}=0, \ \ (h_{ij})=\diag(1,-1,0,\ ...\ ,0),\ \ h(z_0)=0,\ \ \nabla h(z_0)=0;\\
			g_{\ijbar}=0, \ \ \ \ (g_{ij})=\diag(1,0,\ ...\ ,0),\ \ \ \ \ g(z_0)=0,\ \ \nabla g(z_0)=0.
		\end{align}
		The following are the only four cases:
				\begin{itemize}
			\item Case 1, $b_1=0$. Let  $w(t)=tg$.
			\item Case 2, $b_1>0$, $\int qh\neq 0$.  Let  $w(t)=q+th$.
			\item Case 3, $b_1>0$, $\int qh= 0$, $q\not\equiv 0$. \\ \ \ Let  $w(t)=(1-\gamma t^2)q+th$, for large enough $\gamma$.
			\item Case 4, $b_1>0$, $q\equiv 0$. Let  $w(t)=tg.$
		\end{itemize}
		In each case, we can always find a $t_0$ small enough so that  $\MK^{w(t_0)}$ has a smaller minimum eigenvalue,
		\begin{align}
			\lamb(\MK^{w(t_0)})<\lamb(\MK^{q}),
		\end{align}
		 and $w(t_0)\in \sQe$.
		
		({\bf Case 1},  $b_1=0$) Since $b_1= b_2\geq \ ...\ b_n\geq 0$, we have $b_i=0$ for all $i=1,\ ...\ , n$. Then by the construction of  $\MKq$, all eigenvalues of $\MKq$ are $1$. $q$ is chosen to make 
		\begin{align}
			\lamb(\MKq)=\min_{r\in \sQe} \lamb(\MK^{r})
			\label{eq:Case1_minimal}
		\end{align}
		so 
			\begin{align}
			\lamb(\MKq)\leq  \lamb(\MK^{0}).
		\end{align}
		Therefore all eigenvalue of $\MK^0$ must be $1$ and naturally $\MK^0=I_n$. Since 
		\begin{align}
			\MK^0=I_n-\MB^0(\overline{\MA^0})^{-1}\overline{\MB^0}({\MA^0})^{-1},
		\end{align} we have $\MB^0=0$. So 
		\begin{align}
			\MB^{tg}=\diag(t,\ 0,\ ...\ ,0)\neq0.
		\end{align} Then $\MK^{tg}\neq I_n$. Since $\MK^{tg}\leq I_n$, we have 
		\begin{align}
			\lamb(\MK^{tg})<1.
		\end{align} When $t$ is very small $tg\in \sQe$, so we find an element $tg$ in $\sQe$ which makes $\MK^{tg}<\MKq$. This contradicts with the assumption (\ref{assumption:q_minimal}).
		
			({\bf Case 2},  $b_1>0$, $\int qh\neq 0$) 
		 Let $w(t)=q+th$.  We will show that for some $t>0$, $w(t)\in\sQe$ and $\lamb(\MK^{w(t)})<\lamb(\MKq)$, which is a contradiction.
		 
		 Since $\nabla h(z_0)=0$ and $h(z_0)=0$, $\MA^{w(t)}$ and $\MB^{w(t)}$ have very simple expressions:
		 \begin{align}
		 	\MA^{w(t)}&=\MAq=I_n\ \ \ \ \ \ (\text{since } h_{\ijbar}=0);\\
		 	\MB^{w(t)}&=\MBq+t \diag(1,-1,0,\ ...\ ,0)=\diag(b_1+t, b_2-t, b_3,\ ...\ , b_n).
		 \end{align}  
		  So 
		  \begin{align}
		  		\MK^{w(t)}&=\diag(1-b_1^2-2b_1t-t^2, 1-b_2^2+2b_2t-t^2, 1-b_3^2,\ ...\ , 1-b_n^2).
		  \end{align}
		Because $b_1=b_2\geq b_3\geq \ ...\ \geq b_n\geq 0$, when $t$ is very small, 
		\begin{align}
			\lamb(\MK^{w(t)})=\left\{
			\begin{array}{cc}
					1-b_1^2-2b_1t-t^2< 1-b_1^2=\lamb(\MKq) & \text{if \ \ \ } t>0;\\
			1-b_1^2+2b_1t-t^2< 1-b_1^2=\lamb(\MKq) & \text{if \ \ \ } t<0.
			\end{array}
				\right.
		\end{align}
 So when $t$ is small enough and $t\neq 0$ 
 \begin{align}
 	\lamb(\MK^{w(t)})<\lamb(\MK^q).
 \end{align}
	We also need to show that $w(t)$ stays in $\sQe$, which only requires that 
		\begin{align}
			\int w(t)^2\leq \epsilon,
		\end{align}
		because  $h, \ q$ are both pluriharmonic quadratic polynomials.
		Direct computation gives
		\begin{align}
		\int w(t)^2=\int (q+th)^2	=\int q^2+2t\int hq+t^2\int h^2.
		\end{align}
		Case 2 requires that  $\int hq\neq 0$ so 
		\begin{align}
			\left.\frac{d}{dt}\right|_{t=0} \int w(t)^2=2\int hq\neq 0.
		\end{align}
		Therefore, for a very small $t$ ($t>0$, if $\int hq<0$; $t<0$, if $\int hq>0$) 
		\begin{align}
			\int w(t)^2<\int w(0)^2=\int q^2\leq \epsilon.
		\end{align}
		
		({\bf{Case 3, $b_1>0,\ \int qh=0, \ q\not\equiv 0$}})  
		Let $w(t)=(1-\gamma t^2)q+th$. We will show that for a very large $\gamma$, and for $t$ small enough (depending on $\gamma$), 
		\begin{align}
			\MK^{w(t)}&< \MKq, \ \ \ {t\neq 0},\label{case3_K}\\
			w(t)&\in \sQe,\label{case3_norm}
		\end{align}which is a contradiction.
		
		$\MA^{q+th}$ and $\MB^{q+th}$ are the same as those of  Case 2:
				 \begin{align}
			\MA^{q+th}&=I_n;\\
			\MB^{q+th}&=\diag(b_1+t, b_2-t, b_3,\ ...\ , b_n).
		\end{align}  
		Adding $-\gamma t^2 q$ to $q+th$ introduces a perturbation of order $t^2$ to the above matrices, so
	    \begin{align}
			\MA^{w(t)}&=I_n+O(t^2);\\
			\MB^{w(t)}&=\diag(b_1+t, b_2-t, b_3,\ ...\ , b_n)+O(t^2).
		\end{align}  In above $O(t^2)$ are matrices depending on $\gamma$, which is to be determined.
		Then
		\begin{align}
			\MK^{w(t)}=\diag(1-b_1^2-2b_1t, 1-b_1^2+2b_1t, 1-b_3^2, \ ...\ , 1-b_n^2)+O(t^2).
		\end{align}So  when $t$ is small enough
		\begin{align}
	\lamb(\MK^{w(t)})=\left\{
	\begin{array}{cc}
		1-b_1^2-2b_1t+O(t^2)< 1-b_1^2=\lamb(\MKq) & \text{if \ } t>0;\\
		1-b_1^2+2b_1t+O(t^2)< 1-b_1^2=\lamb(\MKq) & \text{if \ } t<0.
	\end{array}
	\right.
\end{align}	(\ref{case3_K}) is verified. Then we need to show $w(t)\in\sQe$, which requires $\int w(t)^2\leq \epsilon$. Direct algebraic computation gives
\begin{align}
	\int w(t)^2=\int q^2+ 2t\int hq+t^2\int(h^2-2\gamma q^2)+  O(t^3).
\end{align}
		Case 3 requires that $\int hq=0$, so 
		\begin{align}
			\int w(t)^2=\int q^2+t^2\int(h^2-2\gamma q^2)+  O(t^3).
		\end{align}
		As a function of $t$, it achieves a local maximum at $t=0$ providing that $\gamma$ is large enough so that $\int h^2-2\gamma q^2<0$.  (\ref{case3_norm}) is verified.

		({\bf Case 4, $b_1>0$, $q\equiv 0$}) Let  $w(t)=tg.$ Since $q=0$ and $g$ is pluri-harmonic with
		$g(z_0)=0$, $\nabla g(z_0)=0$,  we have 
		\begin{align}
		   \MA^{w(t)}&=I_n;\\
		   	\MB^{w(t)}&=\diag(b_1+t, b_2,\ ...\ , b_n ).
		\end{align}
		Then
		\begin{align}
			\MK^{w(t)}= \diag(1-b_1^2-2b_1t-t^2, 1-b_2^2,\ ...\ , 1-b_n^2).
		\end{align}
		We choose $t$ small enough and $t>0$, then
		\begin{align}
				\lamb(\MK^{w(t)})=
			1-b_1^2-2b_1t-t^2< 1-b_1^2=\lamb(\MKq).
		\end{align}
	When $t$ is small enough, it's obvious that $\int (tg)^2\leq\epsilon$, so $w(t)=tg\in \sQe$.
		
					\end{proof}
				
				The following lemma says that if under the perturbation of many $q$, $-\sqrt{-u-q}$ are all convex, then the real Hessian of $-\sqrt{-u}$ has a positive lower bound. This is an analog of  Lemma A.8 of \cite{hu25} and Lemma A.6 of \cite{hu-maxrank-2024}.
				
				\begin{lemma}
					\label{lemma:robustness}
					Suppose that $u$ is defined in a neighborhood of $p_0\in \EC^n$. Let
					\begin{align}
						\sQe=\left\{
						q\left| \text{  real-valued quadratic polynomial, }  q_{\ijbar}=0,\ \int_{\bK}q^2\leq \epsilon\right.
						\right\}.
					\end{align}
						In above  $\bK$ is a bounded and closed set in $\EC^n$ containing $0$ with smooth boundary; the integration uses a measure that is equivalent to the standard metric on $\EC^n$. 
					If for any $q\in\sQe$, $u+q<0$ and $-\sqrt{-u-q}$ is striclty convex at $z_0$, then there is a constant $C>0$ depending on $u(z_0), \epsilon$, $|z_0|$, $\bK$ and the measure used in the integration so that
					\begin{align}
						\text{the real Hessian of } -\sqrt{-u}  \text{ at }z_0\geq CI_{2n}. 
					\end{align}
					
				\end{lemma}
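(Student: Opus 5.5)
Write $v=-\sqrt{-u}$. Let $D(v)$ be its real Hessian at $z_0$; by the formulas of Section 2, $D(v)=\frac{1}{2\sqrt{-u}}D_{\mathcal{A},\mathcal{B}}$, where $D_{\mathcal{A},\mathcal{B}}$ is the real symmetric matrix determined by $(\MA,\MB)$ (real Hessian of the quadratic form with complex Hessians $\MA$, $\MB$). Since $\sqrt{-u(z_0)}$ is a fixed positive number, it suffices to bound $D_{\MA,\MB}$ from below. The key observation is that adding $q\in\sQe$ does not change $u$ at the point in any essential way once we restrict to $q$ with $q(z_0)=0$ and $\nabla q(z_0)=0$. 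For such $q$ we have $\MAq=\MA$ and $\MBq=\MB+(q_{ij})$. Moreover $-\sqrt{-u-q}$ strictly convex at $z_0$ is equivalent to the real Hessian $D_{\MA,\MB+(q_{ij})}$ being positive definite.

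First I identify these $q$. A real pluriharmonic quadratic polynomial vanishing to second order at $z_0$ is exactly $q=2\,\Ree\bigl(\sum_{i,j}c_{ij}(z-z_0)^i(z-z_0)^j\bigr)$ with $(c_{ij})$ complex symmetric, and then $(q_{ij})=2(c_{ij})$. The map $c\mapsto\int_{\bK}q^2$ is a positive definite quadratic form on the finite-dimensional space of symmetric matrices, since a nonzero polynomial cannot vanish on a set with nonempty interior. Its coefficients are continuous in $z_0$ and depend on $\bK$ and the measure. Hence there is $\eta>0$, depending only on $\epsilon$, $|z_0|$, $\bK$ and the measure, such that every symmetric $C$ with operator norm $\|C\|\le\eta$ gives $q\in\sQe$. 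Consequently $D_{\MA,\MB+C}>0$ for all symmetric $C$ with $\|C\|\le\eta$.

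Next I convert this robustness into a quantitative bound. Since $D_{\MA,\MB}$ depends linearly on $\MB$, for a real unit vector $\xi\in\mathbb{R}^{2n}\cong\EC^n$ (with complex form $\zeta$) we have
\[
\xi^TD_{\MA,\MB+C}\,\xi=\xi^TD_{\MA,\MB}\,\xi+2\,\Ree\bigl(C_{ij}\zeta^i\zeta^j\bigr),
\]
up to the normalizing constants of Lemma \ref{lem:determinant_lemma}. For the given $\zeta$, choose $C=-\eta\,\overline{\zeta}\,\overline{\zeta}^T/|\zeta|^2$, which is symmetric with $\|C\|\le\eta$. Then $\Ree(C_{ij}\zeta^i\zeta^j)=-\eta|\zeta|^2$. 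Positivity for this $C$ forces $\xi^TD_{\MA,\MB}\xi> c\,\eta|\xi|^2$ for a universal constant $c$. Dividing by $2\sqrt{-u(z_0)}$ gives the real Hessian of $-\sqrt{-u}$ at $z_0$ bounded below by $C I_{2n}$ with $C=c\,\eta/(2\sqrt{-u(z_0)})$. This depends on $u(z_0)$, $\epsilon$, $|z_0|$, $\bK$ and the measure, as claimed.

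The main obstacle is the second step: making the $L^2$-to-coefficient comparison uniform and correctly tracking the normalization between $(q_{ij})$ and the real Hessian. In particular I must verify that the pure $(2,0)$ perturbation $C$ really shifts $\xi^TD\xi$ by exactly $2\Ree(C_{ij}\zeta^i\zeta^j)$, rather than something involving the $(1,1)$ part. I would check this directly from $\partial_{x}$, $\partial_y$ written in terms of $\partial_z$, $\partial_{\bar z}$. This computation is easiest in the normalized coordinates $\MA=I$, $\MB$ diagonal used in Lemma \ref{lem:determinant_lemma}, noting that a linear change of coordinates only changes constants by a factor controlled by $\MA$. If $\MA$ is not controlled, I would instead keep the original coordinates, where the identity is coordinate-free.
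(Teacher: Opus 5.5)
Your proof is correct. The first step is the same as the paper's, but the second step takes a genuinely different and more elementary route.

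\textbf{Shared first step.} Both arguments restrict to $q$ vanishing to second order at $z_0$ (the paper's family $\sRd$). For such $q$ one has $\MA^q=\MA$ and $\MB^q=\MB+(q_{ij})$.

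\textbf{The paper's second step.} It stays in the complex-matrix formalism. It records that $(\MB+B)\overline{\MA^{-1}}\,\overline{(\MB+B)}\MA^{-1}<1$ for every symmetric $B$ with $B\overline{B}<\delta$. It then invokes Lemma A.8 of \cite{hu25} to turn this robustness under $(2,0)$ perturbations into a $(1,1)$ margin: $\MA>C_1I_n$ and $\MB\overline{(\MA-C_1I_n)^{-1}}\,\overline{\MB}(\MA-C_1I_n)^{-1}<1$. Finally, Lemma A.6 of \cite{hu25} shows that $-\sqrt{-u}-\frac{C_1}{2\sqrt{-u(z_0)}}|z|^2$ is convex at $z_0$.

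\textbf{Your second step.} You test the real Hessian one direction at a time. Since $q(z_0)=0$, $\nabla q(z_0)=0$ and $q_{i\bar j}=0$, the real Hessian of $-\sqrt{-u-q}$ at $z_0$ equals that of $-\sqrt{-u}$ plus $\frac{1}{2\sqrt{-u(z_0)}}$ times that of $q$. Moreover $\xi^TD(q)\xi=2\Ree(q_{ij}\zeta^i\zeta^j)$. The rank-one choice $(q_{ij})=-\eta\,\overline{\zeta}\,\overline{\zeta}^T/|\zeta|^2$ then gives the explicit bound $\xi^TD(-\sqrt{-u})(z_0)\,\xi>\frac{\eta}{\sqrt{-u(z_0)}}|\xi|^2$.

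\textbf{Two simplifications.} The identity above holds in any complex-linear coordinates, so the \emph{main obstacle} you flag is not one: no normalization $\MA=I$ and no control of $\MA$ is needed. Likewise, positive definiteness of $c\mapsto\int_{\bK}q^2$ plays no role. You only need the upper bound $\int_{\bK}q^2\le\|(q_{ij})\|^2\int_{\bK}(|z|+|z_0|)^4$, which follows from $q=\Ree\bigl(q_{ij}(z-z_0)^i(z-z_0)^j\bigr)$. This bound is also what makes $\eta$ depend only on $|z_0|$; continuity in $z_0$ alone would only give dependence on $z_0$.

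\textbf{What each route buys.} The paper's route reuses the matrix lemmas and the $(\MA,\MB,\MM)$ language of the rest of the paper, but relies on external results whose constants are not displayed. Yours is self-contained and produces the constant explicitly.
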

\begin{proof} In the following, we only do estimate at $z_0$ so when there is no ambiguity, we omit $z_0$.
	Let 
		\begin{align}
		\MAq=\left[(u+q)_\ijbar+\frac{(u+q)_i(u+q)_\jbar}{2(-u-q)}\right](z_0);\\
		\MBq=\left[(u+q)_{ij}+\frac{(u+q)_i(u+q)_j}{2(-u-q)}\right](z_0).
	\end{align}
	Denote $\MA=\MA^0$ and $\MB=\MB^0$. Then 
	\begin{align}
		\left(
		\partial_{\ijbar} (-\sqrt{-u})
		\right)(z_0)=\frac{ \MA}{2\sqrt{-u}(z_0)},\\
			\left(
		\partial_{ij} (-\sqrt{-u})
		\right)(z_0)=\frac{ \MB}{2\sqrt{-u}(z_0)}.
	\end{align}
	If we can find a constant $C_1$, so that
	\begin{align}
	&	\ \ \ \ \ \ \  \MA> C_1 I_n; \label{cond:Alowerbound}\\
	&	\MB\overline{(\MA-C_1I_n)^{-1}}\ \overline{\MB}(\MA-C_1I_n)^{-1}< 1\label{cond:BABA},
	\end{align}
	using Lemma A.6 of \cite{hu25}, we can prove that 
	\begin{align}
		-\sqrt{-u}-\frac{C_1}{2\sqrt{-u(z_0)}}|z|^2\text{ is strictly convex at $z_0$},
	\end{align}providing a desired estimate. In the following we derive
	(\ref{cond:Alowerbound}) and (\ref{cond:BABA}) using Lemma A.8 of \cite{hu25}.
	
	When $C_2$ is large enough, depending on $|z_0|$, let $\delta=\frac{\epsilon}{C_2}$, then $\sQe$ contains a subset $\sRd$:
	\begin{align}
		\sRd=\left\{
	\Ree(B_{ij}(z-z_0)_i(z-z_0)_j) \left| B \text{ symmetric,}\ B \overline{B}<\delta\right.
		\right\}.
	\end{align}
	For $q\in \sRd$,
	\begin{align}
		\MA^q&=\MA;\\
		\MB^q&=\MB+(q_{ij}).
	\end{align} Since $-\sqrt{-u-q}$ is strictly convex, for $q\in \sRd$, we have $\MAq>0$ and
	\begin{align}
		\MB^q\overline{(\MA^q)^{-1}}\ \overline{\MB^q}(\MA^q)^{-1}<1. 
	\end{align} This says for any symmetric complex matrix $B$ with $ B \overline{B}<\delta$, 
	\begin{align}
			(\MB+B)\overline{\MA^{-1}}\ \overline{(\MB+B)}\MA^{-1}<1.
	\end{align}
	Then the estimate (\ref{cond:Alowerbound}) and (\ref{cond:BABA}) follows from Lemma A.8 of \cite{hu25}.
\end{proof}
	
	\section{Derivatives of the Minimal and Maximal Eigenvalues of a Matrix Valued Function}
	Let $K=K^i_j$ be a smooth section of $T_{1,0}\otimes T^*_{1,0}$ defined in a neighborhood of a point ${\bf{z}} \in \mathbb{C}^n$. Let $X$, $Y$ be two constant vector fields on $\mathbb{C}^n$. For the first and second derivatives of the minimum and the maximum eigenvalues of $K$, we have the following theorem:
	\begin{lemma}
		\label{lem:derivates_min_eig}
		Let $\lambda_{max}(K)$ (resp. $\lambda_{min}(K)$) be the maximal (resp. minimal) eigenvalue of $K$.
		Suppose that at ${{\bf{z}}}$, $K=\mathrm{diag}(k_1,k_2,\cdots, k_n)$. 
		If $k_1>k_2\ge \cdots \ge k_n$, then at ${\bf{z}}$, 
		\begin{align}
			\partial_X  \lambda_{max}(K)&= \partial_X K_1^1, \label{B1-1}\\
			\partial_X\partial_{Y} \lambda_{max}(K)&=\partial_{XY}K^1_1 +\sum_{\theta\neq1} \frac{\partial_X K_1^\theta \partial_{Y} K_{\theta}^1}{k_1-k_{\theta}}+\sum_{\theta\neq 1}\frac{\partial_{Y} K_1^\theta \partial_{X} K_{\theta}^1}{k_1-k_{\theta}}. \label{B1-2}
		\end{align}
		If $k_1< k_2 \le \cdots \le k_n$, then at ${\bf{z}}$, 
		\begin{align}
			\partial_X \lambda_{min}(K) &= \partial_X K_1^1, \label{B2-1}\\
			\partial_X\partial_{Y} \lambda_{min}(K)&=\partial_{XY}K^1_1 +\sum_{\theta\neq1} \frac{\partial_X K_1^\theta \partial_{Y} K_{\theta}^1}{k_1-k_{\theta}}+\sum_{\theta\neq 1}\frac{\partial_{Y} K_1^\theta \partial_{X} K_{\theta}^1}{k_1-k_{\theta}}. \label{B2-2}   
		\end{align}
	\end{lemma}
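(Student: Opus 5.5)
We treat this as first-order eigenvalue perturbation theory. The eigenvalue in question is simple, so it is smooth. We compute its derivatives by differentiating the eigen-equation and normalizing the eigenvectors. Since $K$ need not be Hermitian, we work with left and right eigenvectors. At ${\bf z}$, $K$ is diagonal, so the standard basis vectors $e_1$ serve as both.

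\textbf{Step 1: smoothness of the simple eigenvalue.} Assume $k_1>k_2\ge\cdots$ (for $\lambda_{max}$) or $k_1<k_2\le\cdots$ (for $\lambda_{min}$). Then $k_1$ is a simple root of $\det(\lambda I-K)$. By the implicit function theorem there are smooth functions $\lambda(z)$ and $v(z)$ near ${\bf z}$ with the following properties:
\begin{itemize}
\item $\lambda({\bf z})=k_1$ and $v({\bf z})=e_1$;
\item $Kv=\lambda v$;
\item $v^1\equiv 1$ (this fixes the normalization).
\end{itemize}
By continuity of the spectrum, $\lambda$ stays the strictly largest (respectively smallest) eigenvalue near ${\bf z}$. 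Hence $\lambda=\lambda_{max}(K)$ (resp. $\lambda_{min}(K)$) there. This assumes the relevant eigenvalues are real near ${\bf z}$, as in our application where $K$ is similar to a Hermitian matrix. In general, one interprets $\lambda$ as the continued simple eigenvalue.

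\textbf{Step 2: first derivatives.} Differentiate $Kv=\lambda v$ along $X$:
\[
\partial_X K\,v+K\,\partial_X v=\partial_X\lambda\, v+\lambda\,\partial_X v .
\]
Take the $1$st component at ${\bf z}$. There $v=e_1$, $K$ is diagonal, and $(\partial_X v)^1=0$. This gives $\partial_X\lambda=\partial_X K^1_1$, which is (B1-1) and (B2-1).

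Now take the $\theta$-th component with $\theta\ne1$:
\[
\partial_X K^\theta_1+k_\theta(\partial_X v)^\theta=k_1(\partial_X v)^\theta,
\qquad\text{so}\qquad
(\partial_X v)^\theta=\frac{\partial_X K^\theta_1}{k_1-k_\theta}.
\]
Here the index convention must match the paper's, where $K^s_1$ versus $K_1^s$ denotes row/column. We will fix it so that the resulting formula reads $\partial_X K_1^\theta\,\partial_Y K_\theta^1$.

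\textbf{Step 3: second derivatives.} Differentiate the eigen-equation once more, along $Y$, and take the first component at ${\bf z}$. Since $v^1\equiv1$, all derivatives of $v^1$ vanish. This gives
\[
\partial_{XY}\lambda=\partial_{XY}K^1_1+\sum_{\theta\ne1}\partial_X K^1_\theta(\partial_Y v)^\theta+\sum_{\theta\ne1}\partial_Y K^1_\theta(\partial_X v)^\theta .
\]
Terms like $(K-\lambda)\partial_{XY}v$ drop out in the first component, because $K^1_\theta=0$ for $\theta\ne1$ at ${\bf z}$ and $(\partial_{XY}v)^1=0$. Substituting the expressions from Step 2 yields exactly (B1-2) and (B2-2). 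The same formula holds in both cases, because the denominator is $k_1-k_\theta$ either way.

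The only real obstacle is bookkeeping: the placement of indices must be consistent with the convention $(\MM\MA^{-1})_1^s$ used in (3.2-1). This is harmless, since we could equally use a left eigenvector normalized by $w_1=1$, which produces the transposed index pattern. We do not need the sign of $k_1-k_\theta$ anywhere; in the $\lambda_{min}$ case it is negative, which is what produces the minus signs in (3.2-1).
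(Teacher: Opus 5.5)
Your proof is correct, but it takes a different route from the paper.

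\textbf{The paper's route.} The paper never introduces eigenvectors. It shifts $K$ by a multiple of the identity so that all eigenvalues are positive. It then approximates $\lambda_{max}$ by the trace functions $H^{(p)}=(\mathrm{tr}K^p)^{1/p}$ and computes $\lim_{p\to\infty}\partial_X H^{(p)}$ and $\lim_{p\to\infty}\partial_{XY}H^{(p)}$ at $\mathbf{z}$ through the geometric sums $\sum_j k_l^j k_m^{p-2-j}$. In that computation the divergent $l=m=1$ contribution cancels against the $(p-1)$ cross term. Finally it appeals to exponential, locally uniform convergence to identify the limit with $\partial_{XY}\lambda_{max}$. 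The $\lambda_{min}$ case is handled separately, via $\lambda_{min}(K)=1/\lambda_{max}(K^{-1})$ and a chain-rule computation.

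\textbf{Your route.} You apply the implicit function theorem to $(K-\lambda)(e_1+v')=0$ with $v^1\equiv 1$. Its Jacobian in $(\lambda,v^2,\dots,v^n)$ is $\mathrm{diag}(-1,k_2-k_1,\dots,k_n-k_1)$, which is invertible exactly because $k_1$ is simple. This buys three things:
\begin{itemize}
\item It proves smoothness of the eigenvalue directly. The paper instead gets it by interchanging limit and derivatives, which needs uniform control of $\partial_{XY}H^{(p)}$ at nearby points where $K$ is no longer diagonal.
\item It treats the maximal and minimal cases in a single computation, with no positivity shift or inversion.
\item The second-order formula falls out of one further differentiation. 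There is no need for the geometric-sum bookkeeping or the cancellation of divergent terms.
\end{itemize}
The paper's trace approach, for its part, avoids choosing eigenvectors altogether and fits the framework of the authors' earlier work.

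\textbf{Remarks.} The reality of the spectrum near $\mathbf{z}$, which you flag, is needed by both arguments. The paper's limit $(\sum a_j^p)^{1/p}\to\max a_j$ already presupposes real positive eigenvalues. In the application this holds, since $\mathcal{M}\mathcal{A}^{-1}$ is similar to a Hermitian matrix. Your worry about index placement is moot: the right-hand side of \eqref{B1-2} is invariant under transposing $K$, because transposition just interchanges the two sums.
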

	
	\begin{proof}
		First, we note that we only need to prove the theorem in the case that all the eigenvalues of $K$ are positive. Let 
		$\widetilde{K}=K+v \cdot Id$, where $v$ is a constant. Then the eigenvalues of $\widetilde{K}$ and $K$ differ only by the constant $v$, so their derivatives are the same. When $v$ is large enough, all eigenvalues of $\widetilde{K}$ are positive.
		We prove (\ref{B1-1}) and (\ref{B1-2}) using an approximation method. Then using the relation
		\begin{equation}
			\text{ Minimal Eigenvalue of $K$} = \frac{1 }{\text{Maximal Eigenvalue of $K^{-1}$}},
		\end{equation}
		we can prove (\ref{B2-1}) and (\ref{B2-2}).
		
		We use 
		\begin{align}
			H^{(p)} = (\mathrm{tr} K^p)^{\frac{1}{p}}, \quad p \in \mathbb{Z}^{+},
		\end{align}
		to approximate the maximal eigenvalue of $K$. It's a basic calculus fact that 
		\begin{align}
			\lim_{p\rightarrow \infty} \left( \sum_{j=1}^N a_j^p \right)^{\frac{1}{p}}=\max \{ a_1,\cdots, a_N \},
		\end{align}
		with $a_i>0$. So 
		\begin{equation}
			\lim_{p\rightarrow \infty} H^{(p)} = \lambda_{max}(K).
		\end{equation}
		We will show 
		\begin{align}
			&(1) \lim_{p\rightarrow \infty} \partial_{X} H^{(p)}= \partial_X (K^1_1), \label{B3-1}\\
			&(2) \lim_{p\rightarrow \infty} \partial_{XY}H^{(p)}=\partial_{XY}K^1_1 +\sum_{\theta\neq1} \frac{\partial_X K_1^\theta \partial_{Y} K_{\theta}^1}{k_1-k_{\theta}}+\sum_{\theta\neq 1}\frac{\partial_{Y} K_1^\theta \partial_{X} K_{\theta}^1}{k_1-k_{\theta}}, \label{B3-2}\\
			&(3) \text{The right-hand side of (\ref{B3-2}) is $\partial_X\partial_{Y} \lambda_{max}(K)$}.\label{B3-3}   
		\end{align}
		Then (\ref{B1-1}) and (\ref{B1-2}) are proved. 
		
		At ${\bf{z}}$, combining the matrix $K=\mathrm{diag}(k_1,k_2,\cdots, k_n)$ and  $k_1>k_2\ge \cdots \ge k_n$ yields 
		\begin{align*}
			\lim_{p\rightarrow \infty}&\frac{ K^{p-1}}{\left(\mathrm{tr}K^{p}\right)^{\frac{p-1}{p}}}=\lim_{p\rightarrow \infty} \frac{1}{k_1^{p-1}}\mathrm{diag}(k_1^{p-1},k_2^{p-1}, \cdots, k_n^{p-1})=
			\mathrm{diag}(1,0, \cdots, 0).
		\end{align*}
		It follows that 
		\begin{equation}\label{B3-4}
			(1)\, \lim_{p\rightarrow \infty}\partial_X H^{(p)}=\lim_{p\rightarrow \infty} \frac{\mathrm{tr}\left( K^{p-1}\partial_XK \right)}{\left(\mathrm{tr}K^p\right)^{\frac{p-1}{p}}}=\partial_X (K^1_1).
		\end{equation}
		So, (\ref{B3-1}) is proved.
		
		Next, we compute the second derivative of $H^{(p)} $,
		\begin{align}
			(2)    \partial_{XY}H^{(p)} &= \frac{\mathrm{tr}\left( K^{p-1}\partial_{XY}K \right) }{\left(\mathrm{tr}K^p\right)^{\frac{p-1}{p}}}+ \sum_{j=0}^{p-2}\frac{\mathrm{tr}\left( K^j \partial_{Y}K K^{p-2-j}\partial_X K \right)}{\left(\mathrm{tr}K^p\right)^{\frac{p-1}{p}}} \nonumber\\
			&\hspace{2ex}-(p-1) \frac{ \mathrm{tr}\left( K^{p-1}\partial_X K\right) \cdot      \mathrm{tr}\left( K^{p-1}\partial_{Y} K\right)      }{   \left(\mathrm{tr}K^p\right)^{\frac{2p-1}{p}}}.\label{B4-1}
		\end{align}
		Using that $K$ is diagonal at ${\bf{z}}$, we simplify the summation in \eqref{B4-1} as follows:
		\begin{align}
			\sum_{j=0}^{p-2}\mathrm{tr}\left( K^j \partial_{Y}K K^{p-2-j}\partial_X K  \right)= \sum_{j=0}^{p-2} \sum_{l,m} k_l^j \partial_{Y}K_l^m k_m^{p-2-j}\partial_X K_m^l. \label{B4-2}
		\end{align}
		Notice that on the right-hand side of the above equality $k_l^j k_m^{p-2-j}$ is a geometric series. So the summation can be easily computed. Denote 
		\begin{equation}
			\sum_{j=0}^{p-2} k_l^jk_m^{p-2-j}=S_{l,m}.     
		\end{equation}
		Then we get
		\begin{equation}
			S_{l,m}= 
			\begin{cases}
				\frac{k_l^{p-1}-k_m^{p-1}}{k_l-k_m}        &  \text{if } k_l \neq k_m ,\\
				(p-1)\cdot k_l^{p-2}         & \text{if }  k_l=k_m.
			\end{cases}
		\end{equation}
		Therefore, we obtain
		\begin{equation}
			\frac{\sum_{j=0}^{p-2}k_l^j k_m^{p-2-j}}{\left(\mathrm{tr}K^p\right)^{\frac{p-1}{p}}}=
			\begin{cases}
				\frac{p-1}{k_1 W_p} & \text{if }  k_l=k_m=k_1,\\
				\frac{(p-1)k_m^{p-2}}{k_1^{p-1}W_p} &\text{if } k_l=k_m\neq k_1,\\
				\frac{k_l^{p-1}-k_m^{p-1}}{(k_l-k_m)k_1^{p-1}W_p} & \text{if }  k_l\neq k_m.
			\end{cases}
		\end{equation}
		In the above, we denote 
		\begin{equation}
			W_p= \left[  1+\left( \frac{k_2}{k_1}\right)^p +\cdots +\left( \frac{k_n}{k_1}\right)^p \right]^{\frac{p-1}{p}}.
		\end{equation}
		The assumption that $k_1 > k_j$, for $j\ge 2$, implies that as $p\rightarrow \infty$, $W_p \rightarrow 1$.
		When $k_l=k_m\neq k_1$, as $p\rightarrow \infty$, we get
		\begin{equation}
			\frac{(p-1) k_m^{p-2}}{k_1^{p-1}W_p} \rightarrow 0.
		\end{equation}
		When $k_l\neq k_1$, $k_m \neq k_1$, as $p\rightarrow \infty$, we get
		\begin{equation}
			\frac{k_l^{p-1}-k_m^{p-1}}{(k_l-k_m) k_1^{p-1}W_p}=\frac{\left( \frac{k_l}{k_1}\right)^{p-1}-\left( \frac{k_m}{k_1}\right)^{p-1}}{(k_l-k_m)k_1W_p} \rightarrow 0.
		\end{equation}
		When $k_l=k_1$ and $k_l\neq k_m$, as $p\rightarrow \infty$, we get
		\begin{equation}
			\frac{k_l^{p-1}-k_m^{p-1}}{(k_l-k_m)k_1^{p-1}W_p}= \frac{1-\left( \frac{k_m}{k_1}\right)^{p-1}}{(k_1-k_m)W_p} \rightarrow \frac{1}{k_1-k_m}.
		\end{equation}
		When $k_m=k_1$ and $k_l\neq k_m$, as $p\rightarrow \infty$, we get
		\begin{equation}
			\frac{k_l^{p-1}-k_m^{p-1}}{(k_l-k_m)k_1^{p-1}W_p}= \frac{\left( \frac{k_l}{k_1}\right)^{p-1}-1}{(k_l-k_1)W_p} \rightarrow \frac{1}{k_1-k_l}.
		\end{equation}
		
		In the summation (\ref{B4-2}), the only possible term that goes to infinity as $p\rightarrow \infty$ is the one corresponding to $m=l=1$, but this term cancels with the last term of (\ref{B4-1}). Then we get
		\begin{align}
			&\frac{\sum_{j=0}^{p-2}k_1^j\partial_{Y}K_1^1 k_1^{p-2-j}\partial_X K_1^1 }{ \left(\mathrm{tr}K^p\right)^{\frac{p-1}{p}}} -(p-1) \frac{ \mathrm{tr}\left( K^{p-1}\partial_X K\right) \cdot      \mathrm{tr}\left( K^{p-1}\partial_{Y} K\right)      }{   \left(\mathrm{tr}K^p\right)^{\frac{2p-1}{p}}}.\\
			&=(p-1)\left[    \frac{\partial_X K_1^1 \partial_{Y}K_1^1 }{k_1W_p}-  \frac{ \mathrm{tr}\left( K^{p-1}\partial_X K\right) \cdot      \mathrm{tr}\left( K^{p-1}\partial_{Y} K\right)      }{   \left(\mathrm{tr}K^p\right)^{\frac{2p-1}{p}}}  \right]       .
		\end{align}
		We denote the last term in the above equality by $(p-1)(E_p-F_p)$. Employing the notation $W_p$, we get
		\begin{align*}
			F_p = \frac{1}{k_1 (W_p)^{\frac{2p-1}{p}}} \left[\partial_X K_1^1+ \sum_{j>1}\partial_X K_j^j \left( \frac{k_j}{k_1} \right)^{p-1} \right]\left[ \partial_{Y} K_1^1+\sum_{j>1} \partial_{Y} K_j^j\left( \frac{k_j}{k_1} \right)^{p-1} \right].
		\end{align*}
		So $E_p$ and $F_p$ both converge to $\frac{\partial_X K_1^1 \partial_{Y}K_1^1 }{k_1}$ exponentially as $p\rightarrow \infty$. As a result, $(p-1)(E_p-F_p) \rightarrow 0$. Consequently, we get 
		\begin{equation}
			\lim_{p\rightarrow \infty} \partial_{XY}H^{(p)}=\partial_{XY}K^1_1 +\sum_{l\neq1} \frac{\partial_X K_1^l \partial_{Y} K_{l}^1}{k_1-k_{l}}+\sum_{l\neq 1}\frac{\partial_{Y} K_1^l\partial_{X} K_{l}^1}{k_1-k_{l}}.
		\end{equation}
		So, (\ref{B3-2}) is proved.
		
		To prove  (\ref{B3-3}), we notice that the convergence in (\ref{B3-2}) is exponential, i.e., at ${\bf{z}}$,
		\begin{equation}
			\left|\partial_{XY}H^{(p)}-\left(\partial_{XY}K^1_1 +\sum_{\theta\neq1} \frac{\partial_X K_1^\theta \partial_{Y} K_{\theta}^1}{k_1-k_{\theta}}+\sum_{\theta\neq 1}\frac{\partial_{Y} K_1^\theta \partial_{X} K_{\theta}^1}{k_1-k_{\theta}}\right) \right| \le C_1 \e^{-C_2p}.
		\end{equation}
		With $C_1$, $C_2$ depending on $\max \left( \left| K_i^j \right| + \left| \nabla K_i^j\right| \right)$ and $(k_1-k_{\theta})$.
		So the convergence in (\ref{B3-2}) is uniform in a small neighborhood of ${\bf{z}}$. This then implies the right-hand side of (\ref{B3-2}) is indeed $\partial_{XY} \lambda_{max}(K)$.
		
		Now we prove the equations regarding minimal eigenvalues. Let $S=K^{-1}$. Since $K$ is diagonal at the point ${\bf{z}}$,
		\begin{equation}
			S=\mathrm{diag} \left( \frac{1}{k_1},\frac{1}{k_2}, \cdots, \frac{1}{k_n} \right) \overset{\triangle}{=} \mathrm{diag} \left( s_1,s_2, \cdots, s_n \right).
		\end{equation}
		We assume $k_1< k_2 \le \cdots \le k_n$, so $s_1>s_2\ge \cdots \ge s_n$.
		Denote the maximal eigenvalue of $S$ by $g$ and the minimal eigenvalue of $K$ by $f$, then $f=\frac{1}{g}$,
		\begin{align}
			\p f&=-\frac{1}{g^2} \p g, \label{B9-1}\\
			\pq f&= -\frac{1}{g^2}\pq g+2\frac{1}{g^3}\p g \q g. \label{B9-2}
		\end{align}
		In (\ref{B9-1}), at the point ${\bf{z}}$, computing $\p g$ using (\ref{B1-1}) yields 
		\begin{align}
			\p g =\p S_1^1 = - \left( K^{-1} \p K K^{-1 }\right)^1_1 =-\frac{\p K_1^1 }{(k_1)^2}.
		\end{align}
		Then, at the point ${\bf{z}}$, we get
		\begin{equation}
			\p f =-\frac{1}{g^2}\p g =  \p K^1_1.
		\end{equation}
		It follows that 
		\begin{align}
			2\frac{1}{g^3}\p g \q g =2 (k_1)^3 \frac{1}{(k_1)^2}\p K^1_1 \cdot \frac{1}{(k_1)^2}\q K_1^1 = \frac{2}{k_1}\p K^1_1 \q K_1^1 .\label{B10-1}
		\end{align}
		Then we use (\ref{B1-2}) to compute 
		\begin{align}
			\pq g = \left( \pq S \right)^1_1 + \sum_{j>1}\frac{\left( \p S \right)^j_1 \left( \q S \right)^1_j}{s_1-s_j}+\sum_{j>1}\frac{\left( \q S \right)^j_1 \left( \p S \right)^1_j}{s_1-s_j}. \label{B10-2}
		\end{align}
		Replacing $S$ by $K^{-1}$, we get $\p S=-K^{-1}\p K K^{-1}$. Hence, 
		\begin{align}
			\left( \p S \right)^j_1 &=- \frac{ \left( \p K \right)^j_1 }{k_1k_j} ,\\
			\pq S&=K^{-1}\left(-\pq K +\p K K^{-1}\q K +\q K K^{-1}\p K \right)K^{-1}.
		\end{align}
		Since $K$ is diagonal, we obtain
		\begin{align}
			&-\frac{1}{g^2}\left( \pq S \right)_1^1 \\
			&=  \pq K^1_1-\sum_{j>1} \frac{(\p K_1^j) (\q K_j^1)}{k_j}  
			-\sum_{j>1} \frac{(\q K_1^j) (\p K_j^1)}{k_j} -\frac{2(\q K_1^1) (\p K_1^1)}{k_1}.
		\end{align}
		The last term in the above equality cancels with (\ref{B10-1}). Recalling (\ref{B9-2}),  we get 
		\begin{align*}
			&\pq f= -\frac{1}{g^2}\pq g+2\frac{1}{g^3}\p g \q g\\
			&    =\left(\pq K \right)_1^1 -\sum_{j>1}\left[(\p K_1^j) (\q K_j^1) +(\q K_1^j) (\p K_j^1)  \right] \left[ \frac{1}{k_{j}} +\frac{     \frac{1}{{(k_1)^2(k_j)^2}}       }{\frac{1}{k_1}-\frac{1}{k_j}}\cdot k_1^2 \right].
		\end{align*}
		The coefficient of $\left[(\p K_1^j) (\q K_j^1) +(\q K_1^j) (\p K_j^1)  \right]$ can be simplified as follows
		\begin{align}
			\frac{1}{k_{j}} +\frac{     \frac{1}{{(k_1)^2(k_j)^2}}       }{\frac{1}{k_1}-\frac{1}{k_j}}\cdot k_1^2= \frac{1}{k_j-k_1}.
		\end{align}
		So, (\ref{B2-2}) is proved.
	\end{proof}
	
	\section{Power convexity of solutions to complex Monge-Amp\`{e}re equation in complex dimension one}
	In this appendix, we prove our main theorem in the case of  complex dimension one. Although the argument is essentially one-dimensional, it captures the main ideas of the general case. In particular, the construction and computations presented here closely parallel those in higher dimensions, while remaining simpler and more transparent.
	\begin{theorem}
		Let $\Omega$ be a strictly convex bounded domain in $\mathbb{C}$ with smooth boundary and $u$ be the classical solution of Problem \ref{prob:Dirichlet}.
		Then $-\sqrt{-u}$ is strictly convex in $\Omega$.
	\end{theorem}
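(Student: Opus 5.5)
In complex dimension one the matrices become scalars. Set $\MA=u_{1\onebar}+\frac{|u_1|^2}{-2u}>0$ and $\MB=u_{11}+\frac{u_1^2}{-2u}$. By Lemma \ref{lem:determinant_lemma}, strict convexity of $v=-\sqrt{-u}$ at a point is equivalent to
\begin{align*}
\sigma:=\MK=\frac{\MM}{\MA}=1-\frac{|\MB|^2}{\MA^2}>0,
\end{align*}
that is, $|\MB|<\MA$. In one variable the minimal eigenvalue is $\sigma$ itself, so $\sigma$ is smooth wherever it is defined. There is no eigenvalue-crossing issue, and the perturbation machinery of Section \ref{sec:generalCase_comparison} (Lemma \ref{lemma:Seperation}) is not needed. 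The plan is therefore:
\begin{itemize}
\item[(i)] show $\sigma>0$ near $\partial\Omega$;
\item[(ii)] show $u^{1\onebar}\sigma_{1\onebar}\le C(\sigma+|\nabla\sigma|)$ wherever $\sigma\ge0$ is small;
\item[(iii)] conclude $\sigma>0$ in $\Omega$ by the strong maximum principle, via a continuity method.
\end{itemize}

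For (i), near $\partial\Omega$ we have $u_1\neq0$ and $-u\to0$. Hence $\MA\approx|u_1|^2/(-2u)$ and $\MB\approx u_1^2/(-2u)$ to leading order. The next-order terms are governed by the curvature of the level curves of $u$, and strict convexity of $\Omega$ gives $\sigma>0$ in a collar $\Omega\setminus\overline{\Ov}$. This is the same standard fact quoted earlier in the paper, so I would only sketch it.

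For (ii), I would simply repeat the computation of Subsection \ref{subsec:compute_SecondDerivative} with $n=1$. Work at a point where $\sigma\sim0$, and normalize by a complex rotation and scaling of the coordinate so that $\MA=1$ and $\MB=b_1\ge0$ with $b_1\sim1$. The terms of Types II and III cancel as in Table \ref{ReadingTable}. There is no index $s\neq1$, so $Q_3$, $Q_{41}$, $Q_{12}$ and all $H_{ps}$ terms vanish. Formula (\ref{final_fullQ}) then reduces to
\begin{align*}
u^{1\onebar}\sigma_{1\onebar}\sim|\trace W|^2-|W_{1\onebar}|^2=0 .
\end{align*}
Since $W$ is $1\times1$, this quantity is identically zero. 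Moreover, the constraint $u^{1\onebar}W_{1\onebar}=0$ already forces $W=u_{11\onebar}-u_{\onebar1\onebar}=0$, because $(\log u_{1\onebar})_1=0$. So the inequality holds trivially in dimension one, and no analogue of the quadratic form analysis of Subsection \ref{sec:2d} is required. I would double-check directly that the identity $\partial\MM_{1\onebar}\sim0$ in (\ref{relation_WB}) is used consistently, so that $Q_{42}$ is handled correctly.

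For (iii), suppose $\sigma$ has a nonpositive interior minimum. Applying the strong maximum principle (Hopf/Harnack) to the inequality from (ii) forces $\sigma\equiv0$, contradicting (i). To avoid having to assume the existence of a point where $\sigma=0$, I would deform $\Omega$ to the disk through $\Omega_t=t\Omega+(1-t)B$, exactly as at the end of Section \ref{sec:generalCase_comparison}. At $t=0$ we have $\sigma\equiv1$, since $u_0=|z|^2-1$. The first time $\min\sigma_t=0$ would give an interior zero minimum, which is impossible.

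The main obstacle is step (i): obtaining a uniform collar, in $t$, on which $\sigma_t>0$. This needs the boundary expansion of $u$, namely $u_{11}$ compared to the curvature of $\partial\Omega$. I would take it from the known boundary behaviour together with the $C^2$ estimates, as the paper does.
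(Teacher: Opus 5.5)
Your proposal is correct and is essentially the paper's own argument. It uses the same $\sigma=1-|\MB|^2/\MA^2$, the same target $\partial_{z\bar z}\sigma\lesssim 0$, and the same deformation to the disk. Your specialization of (\ref{final_fullQ}) to $n=1$, where $W=0$ and $Q_2$ cancels $Q_{42}$ via (\ref{relation_WB}), is exactly the cancellation $h^2|u_z-u_{\bar z}|^2-|\mathscr{B}|^2\sim 0$ that the paper checks by a direct one-variable computation.

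One harmless slip: for $u_0=|z|^2-1$ one has $\MB=\bar z^2/(2(1-|z|^2))\neq 0$, so $\sigma_0=4(1-|z|^2)/(2-|z|^2)^2$ rather than $\sigma_0\equiv 1$. It is still positive on $\overline{\Ov_0}$, which is all step (iii) needs.
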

	
	\begin{proof} 
			We only need to prove an apriori estimate that if $-\sqrt{-u}$ is convex then it's strictly convex. Then the result follows from a deformation argument.
		
		For dimension one, the complex Monge-Amp\`{e}re equation is 
		\begin{equation}\label{C1-1}
			u_{z\bar{z}} =1 .
		\end{equation}
		By the maximum principle and the boundary condition $u|_{\partial \Omega=0}$, we get $-u>0$ in $\Omega$.  Let $v:=-\sqrt{-u}$. 
		Then Hessian matrices of $v$ are the functions
		\begin{align}
			v_{z\bar{z}} =\frac{1}{2\sqrt{-u}}\left[ u_{z\bar{z}}+ \frac{u_z u_{\bar{z}}}{-2u} \right],
			\quad \text{  and  } \quad  v_{zz}=\frac{1}{2\sqrt{-u}}\left[ u_{zz}+\frac{u_z u_{z}}{-2u} \right] .
		\end{align}
		Let $h(u)= \frac{1}{-2u}$. We set the functions 
		\begin{align}\label{C1-2}
			\alpha = u_{z\bar{z}}+hu_zu_{\bar{z}}, \text{ and } \beta =u_{zz}+hu_zu_{z}.
		\end{align}
		Let 
		\begin{equation}
			\sigma= 1-\frac{|\beta|^2}{\alpha^2}.
		\end{equation}
		By Lemma A.6 of \cite{hu25} or Lemma \ref{lem:determinant_lemma} of this paper, the function $v$ is convex if and only if $\alpha>0$ and $ \sigma>0$.  Since the function $\alpha$ is positive in $\Omega$, we only need to prove the function $\sigma>0$ in $\Omega$. 
		We want to show that 
		\begin{equation}
			\zz \sigma \le C \cdot(\sigma+|\nabla \sigma|),
		\end{equation}
		which is, using the notation of this paper,
		\begin{equation}
			\zz \sigma \lesssim 0.
		\end{equation}
		Since $\sigma$ is invariant under complex linear change of coordinates, we may arrange that, at the point of computation,
		\begin{equation}
			\alpha=1, \quad \beta \in \mathbb{R},\quad  0<\beta <1
		\end{equation}
		In what follows, all computations are carried out at this fixed point. In particular, at the point of computation we have
		\begin{equation}
			\alpha=1, \quad \beta \sim 1.
		\end{equation}
		Before computing $\zz \sigma$, we derive some differential relations for $\alpha$ and $\beta$.
		Since $u_{z\bar{z}}=1$, we get $u_{z\bar{z}z}=0$. Using (\ref{C1-2}) and $h'=2h^2$, we apply $\partial_z$ to the function $\alpha$ and get
		\begin{align}
			\pz\alpha&= u_{z\bar{z}z}+2h^2u_z^2u_{\bar{z}}+hu_{zz}u_{\bar{z}}+hu_z u_{z\bar{z}} \\
			&=0+h(hu_{z}^2+u_{zz})u_{\bar{z}}+h(hu_{z}u_{\bar{z}}+u_{z\bar{z}})u_z\\
			&=h\beta u_{\bar{z}}+h\alpha u_z \label{a(1-1)}\\
			&\sim hu_{\bar{z}}+hu_z.\label{C(1.1)}
		\end{align}
		Similarly, we have 
		\begin{align}
			\pbz\beta =2h\alpha u_{z} \sim 2hu_z.\label{C(2.1)}
		\end{align}
		Then we apply $\partial_{\bar{z}}$ to the function $\pz\alpha$ in \eqref{a(1-1)} and get
		\begin{align}
			\zz \alpha &=h' u_{\bar{z}}\beta u_{\bar{z}}+h\pbz\beta u_{\bar{z}}+h\beta u_{\bar{z}\bar{z}}+h'u_{\bar{z}}\alpha u_z+h\pbz\alpha u_z+h\alpha u_{z\bar{z}} \\
			&=2h^2\beta u_{\bar{z}}^2+2h^2\alpha |u_z|^2+h\beta (\overline{\beta}-hu_{\bar{z}}^2) \\
			&\hspace{2ex}+2h^2\alpha|u_z|^2+h^2\overline{\beta} u_z^2+h^2\alpha |u_z|^2+h\alpha(\alpha-h|u_z|^2) \\
			&=h(\alpha^2+|\beta|^2)+4h^2\alpha|u_z|^2+h^2\beta u_{\bar{z}}^2+h^2\overline{\beta}u_z^2. \\
			&\sim 2h+4h^2|u_z|^2+h^2 u_{\bar{z}}^2+h^2u_z^2.
		\end{align}
		Similarly, we have 
		\begin{align}
			\zz\beta&=2h\alpha \beta+4h^2\alpha u_z^2+2h^2\beta|u_z|^2 \sim 2h+4h^2 u_z^2+2h^2|u_z|^2.
		\end{align}
		Using the first derivatives of $\alpha$ and $\beta$, we get the following relations 
		\begin{align}
			\zz \alpha-   \pz\alpha \pbz\alpha&\sim  2h+2h^2|u_z|^2, \label{C2.1.1}\\
			\zz\alpha-\pbz\beta \pz\overline{\beta}&\sim  2h+h^2(u_z^2+u_{\bar{z}})^2 ,\label{C2.1.2}\\       
			\zz \beta-\pz\alpha \pbz\beta&\sim 2h+2h^2 u_z^2.\label{C2.1.3}
		\end{align}
		For convenience, we set 
		\begin{equation}
			\mathscr{B}= \pz\beta-\frac{\beta \pz\alpha}{\alpha}.
		\end{equation}
		Then we have 
		\begin{align}
			\frac{\mathscr{B}}{\alpha}=\pz\left(\frac{\beta}{\alpha}\right).
		\end{align}
		Let $\mu= \alpha -\frac{|\beta|^2}{\alpha}$, then we get 
		\begin{equation}\label{a(2-2)}
			\pz \mu = \pz \alpha - \frac{\mathscr{B}\overline{\beta}}{\alpha}-\frac{\beta \pz\overline{\beta}}{\alpha}.
		\end{equation}
		Recalling the definition of $\sigma$, we get 
		\begin{equation}\label{a(2-1)}
			\pz \sigma= \pz \left( \frac{\mu}{\alpha}\right)=\frac{\pz \mu}{\alpha}-\frac{\mu \pz \alpha}{\alpha^2} \sim 0. 
		\end{equation}
		Combining \eqref{a(2-1)} and $\mu\sim 0$ yields $\pz \mu \sim 0$. Then from \eqref{a(2-2)},  we find 
		\begin{equation}
			\pz \alpha  - \pz\overline{\beta} \sim \mathscr{B}.
		\end{equation}
		Subtracting the complex conjugate of  \eqref{C(2.1)} from \eqref{C(1.1)}, we get 
		\begin{equation}\label{C3.1}
			h(u_{\bar{z}}-u_z) \sim \mathscr{B}.
		\end{equation}
		We apply $\partial_{\bar{z}}$ to the function $ \mu_{z}$ in \eqref{a(2-2)} and get
		\begin{align}
			\zz\mu &=\zz \alpha - \pbz \mathscr{B} \frac{\overline{\beta}}{\alpha}-\mathscr{B} \pbz \left( \frac{\overline{\beta}}{\alpha} \right) -\frac{\pbz\beta \pz \overline{\beta}}{\alpha} - \beta  \pbz \left( \frac{\pz \overline{\beta} }{\alpha} \right) \\
			&\sim ( \zz \alpha-   \pz\alpha \pbz\alpha)+(\zz\alpha-\pbz\beta \pz\overline{\beta}) -(\zz \beta-\pz\alpha \pbz\beta)  \\
			&\hspace{3ex}-\overline{(\zz \beta-\pz\alpha \pbz\beta)} -|\mathscr{B}|^2.
		\end{align}
		We plug in \eqref{C2.1.1}, \eqref{C2.1.2} and \eqref{C2.1.3}, and get 
		\begin{equation}
			\zz \mu \sim h^2 |u_z-u_{\bar{z}}|^2 -|\mathscr{B}|^2.
		\end{equation}
		Then using \eqref{C3.1}, we have 
		\begin{equation}
			\zz \mu \sim 0.
		\end{equation}
		Combining $\mu\sim 0$, $\pz \mu \sim 0$ and $\zz \mu \sim 0$ yields
		\begin{equation*}
			\zz \sigma =\zz \left( \frac{\mu}{\alpha} \right)= \zz \mu \frac{1}{\alpha}+ \pz \mu \pbz \left( \frac{1}{\alpha}\right)+\pbz \mu \pz \left( \frac{1}{\alpha}\right) +\mu \zz \left( \frac{1}{\alpha}\right)\sim 0.
		\end{equation*}
	\end{proof}


\begin{thebibliography}{99}
	
	

	
	
	\bibitem[BG09] {Bian-Guan2009}B. Bian, P. Guan, {\em A microscopic convexity principle for nonlinear partial	differential equations}, Invent. math. 177, 307–335 (2009).
		\bibitem[BGMX11] {Bian-Guan-Ma-Xu2011}B. Bian, P. Guan, X. Ma, L. Xu, {\em A constant rank theorem for quasiconcave solutions of fully
		nonlinear partial differential equations}, Indiana University Mathematics Journal, Vol. 60, No. 1 (2011), pp. 101-119.
	\bibitem[CGM07]{Caffarelli-Guan-Ma2007} L. Caffarelli, P. Guan, X. Ma, {\em A constant rank theorem for solutions of fully nonlinear
		elliptic equations}, Communications on Pure and Applied Mathematics 60(12):1769 - 1791.
	

	
	\bibitem[CF85]{Caffarelli-Friedman1985} L. Caffarelli, A. Friedman, {\em Convexity of Solutions of Semilinear Elliptic Equations, }Duke Math. J. 52(2): 431-456 (June 1985). 
	
	\bibitem[CJX25]{Chen-Jia-Xiong2025}
	C, Chen, H, Jia, J, Xiong,
	{\em Strict Power Convexity of Solutions to Fully Nonlinear Elliptic Partial Differential Equations in Two Dimensional Convex Domains}.
	{J. Differential Equations} (2025) 419:505--517.
	
	\bibitem[CLM26]{ComplexSigmaEigen} C. Chen, J. Li, X. Ma, {\em Brunn--Minkowski Inequality for the First Complex -Hessian Eigenvalue,} 	arXiv:2606.25678.
	
	\bibitem[GX13]{Guan-Xu2013} P.Guan, L. Xu, {\em Convexity estimates for level sets of quasiconcave solutions
		to fully nonlinear elliptic equations}, J. Reine Angew. Math.vol. 2013, no. 680, 2013, pp. 41-67. 
	
\bibitem[HJ13]{Horn-Johnson2013} R. Horn, C. Johnson, Matrix Analysis, Cambridge Univ. Press, Cambridge, Second Edition 2013.

\bibitem[H21]{hu2021} J. Hu, {\em An Obstacle for Higher Regularity of Geodesics in the Space of K\"ahler Potentials.} International Mathematics Research Notices, Volume 2021, Issue 15, August 2021, Pages 11493–11513.
\bibitem[H24A]{Hu-metric-lower-bound24} J. Hu, {\em A Metric Lower Bound Estimate for  Geodesics in the Space of K\"ahler Potentials.} The Journal of Geometric Analysis (2024) 34:225.
\bibitem[H24B]{hu-maxrank-2024}J. Hu, {\em A Maximum Rank Theorem for Solutions to the Homogenous Complex Monge–Amp\`ere Equation in a $\EC$-Convex Ring.} Calc.Var.(2024) 63:166.
\bibitem[H25]{hu25}J. Hu, {\em The Preservation of Convexity by Geodesics in the Space of K\"ahler Potentials on Complex Affine Manifolds}. 	Math. Annalen (2025) 393:1635–1681.
\bibitem[HS26]{HuSheng} J. Hu, L. Sheng, {\em Convexity of the Potential Function of the Einstein-Kähler Metric on a Convex Domain}, 	arXiv:2603.10530.

\bibitem[LMS26]{RealSigma}J. Li, X. Ma, P. Salani, {\em A Brunn--Minkowski inequality for the Hessian eigenvalue in convex domain}, 	arXiv:2606.22847.

\bibitem[MX08]{maxu}  X. N. Ma and L. Xu. {\em The convexity of solution of a class Hessian equation in bounded convex
	domain in R3}. J. Funct. Anal., 255(7):1713–1723, 2008.

\bibitem[SW16]{SzekelyhidiWeinkove2016} G. Sz\'ekelyhidi, B. Weinkove, {\em On a constant rank theorem for nonlinear elliptic {PDE}s}, Discrete Contin. Dyn. Syst. 2016, Volume 36, Issue 11: 6523-6532. 

\bibitem[ZZ26]{ZhangZhou} W. Zhang, Q. Zhou, {\em Power convexity of solutions to complex Monge-Amp\`ere equation in $\EC^2$.}   arXiv:2505.11002v3.
	\end{thebibliography}
\end{document}